\newcommand{\f}{\operatorname}
\newcommand{\R}{\mathbb{R}}
\newcommand{\N}{\mathbb{N}}
\theoremstyle{plain}
\newtheorem{theorem}{Theorem}[section]
\newtheorem{corollary}[theorem]{Corollary}
\newtheorem{proposition}[theorem]{Proposition}
\theoremstyle{definition}
\newtheorem{definition}[theorem]{Definition}
\theoremstyle{remark}
\begin{document}

\articletype{ARTICLE TEMPLATE}

\title{Power laws distributions in objective priors}

\author{ Pedro L. Ramos$^{\rm ab}$$^{\ast}$\thanks{$^\ast$Corresponding author. Email: pedrolramos@usp.br
\vspace{6pt}}, Francisco A. Rodrigues$^{\rm a}$, Eduardo Ramos$^{\rm a}$,  Dipak K. Dey$^{\rm b}$ and Francisco Louzada$^{\rm a}$ \\\vspace{6pt}  $^{a}${Institute of Mathematical Science and Computing, University of  S\~ao \\ Paulo, S\~ao Carlos, Brazil} \\ 
$^{b}${Department of Statistics, University of  Connecticut, Storrs, CT, USA}}

\maketitle

\begin{abstract}
The use of objective prior in Bayesian applications has become a common practice to analyze data without subjective information. Formal rules usually obtain these priors distributions, and the data provide the dominant information in the posterior distribution. However, these priors are typically improper and may lead to improper posterior. Here, we show, for a general family of distributions, that the obtained objective priors for the parameters either follow a power-law distribution or has an asymptotic power-law behavior. As a result, we observed that the exponents of the model are between 0.5 and 1. Understand these behaviors allow us to easily verify if such priors lead to proper or improper posteriors directly from the exponent of the power-law. The general family considered in our study includes essential models such as Exponential, Gamma, Weibull, Nakagami-m, Haf-Normal, Rayleigh, Erlang, and Maxwell Boltzmann distributions, to list a few. In summary, we show that comprehending the mechanisms describing the shapes of the priors provides essential information that can be used in situations where additional complexity is presented.

\end{abstract}

\begin{keywords}
Bayesian inference; objective prior; power-law; statistical method.
\end{keywords}

\section{Introduction}

Bayesian methods have become ubiquitous among statistical procedures and have provided important results in areas from medicine to engineering \cite{lloyd2019improved, wang2019novel}. In the Bayesian approach, the parameters in a statistical model are assumed to be random variables \cite{bernardo2005}, differently from the frequentist approach, that consider these parameters as constant. Moreover, a subjective ingredient can be included in the model, to reproduce the knowledge of a specialist (see O'Hagan et al. \cite{o2006uncertain}). On the other hand, in many situations, we are interested in obtaining a prior distribution, which guarantees that the information provided by the data will not be overshadowed by subjective information. In this case, an objective analysis is recommended by considering non-informative priors that are derived by formal rules \cite{consonni2018prior, kass1996selection}. Although several studies have found weakly informative priors (flat priors) as presumed non-informative priors, Bernardo \cite{bernardo2005} argued that using simple proper priors, supposed to be non-informative, often hides significant unwarranted assumptions, which may easily dominate, or even invalidate the statistical analysis. 

The objective priors are constructed by formal rules \cite{kass1996selection} and are usually improper, i.e., do not correspond to proper probability distribution and could lead to improper posteriors, which is undesirable. According to  Northrop and Attalides \cite{northrop2016}, there are no simple conditions that can be used to prove that improper prior yields a proper posterior for a particular distribution. Therefore a case-by-case investigation is needed to check the propriety of the posterior distribution. The Stacy~\cite{stacy1962} general family of distribution overcomes this problem by proving that if the objective priors follow asymptotically a power-law model with the exponent in some particular regions, then the obtained posteriors are proper or improper.  As a result, one can easily check if the obtained posterior is proper or improper, directly looking at the behavior of the improper prior as a power-law model. 

Understanding the situations when the data follow a power-law distribution can indicate the mechanisms that describe the natural phenomenon in question. Power-law distributions appears in many physical, biological, and man-made phenomena, for instance, they can be used to describe biological network \cite{prvzulj2007biological}, infectious diseases \cite{geilhufe2014power}, the sizes of craters on the moon  \cite{newman2005power}, intensity function in repairable systems \cite{louzada2019repairable} and energy dissipation in cyclones \cite{corral2010scaling} (see also \cite{goldstein2004problems, barrat2008dynamical, newman2018networks}).  The probability density function of a power-law distribution can be represented as
\begin{equation}\label{pld}
\begin{aligned}
\pi(\theta)&=c\,\theta^{-\alpha},
\end{aligned}
\end{equation}
where $c$ is a normalized constant and $\alpha$ the exponent parameter. During the applications of Bayesian methods the normalized constant is usually omitted and the prior can be represented by $\pi(\theta)\propto \theta^{-\alpha}$.

In this paper, we analyze the behavior of different objective priors related to the parameters of many distributions. We show that its asymptotic behavior follows power-law models with exponents between 0.5 and 1. Under these cases, they may lead to proper or improper posterior depending on the exponent values of the priors. Situations, where a power-law distribution is observed with an exponent smaller than one were observed by Goldstein et al. \cite{goldstein2004problems}, Deluca and Corral \cite{deluca2013fitting} and Hanel et al. \cite{hanel2017fitting}. The objective priors are obtained from the Jeffreys' rule \cite{kass1996selection}, Jeffreys' prior \cite{jeffreys1946invariant} and reference priors \cite{bernardo1979a, bernardo2005, berger2015}. Although the posterior distribution may be proper, the posterior moments can be infinite. Therefore, we also provided sufficient conditions to verify if the posterior moments are finite. These results play an important role in which the acknowledgement of the power-law behavior for the prior distribution related to a particular distribution can provide an understanding of the shapes of the prior that can be used in situations where additional complexity (e.g. random censoring, long-term survival, among others) is presented. Priors obtained from formal rules are more difficult or cannot be obtained. 

The remainder of this paper is organized as follows. Section 2 presents the theorems that provide necessary and sufficient conditions for the posterior distributions to be proper depending on the asymptotic behavior of the prior as a power-law model. Additionally, we also discuss sufficient conditions to check if the posterior moments are finite. Sections 3 present study of the behavior of the objective priors. Finally, Section 4 summarizes the study with concluding remarks.

\section{An general model}

The Stacy family of distributions plays an important role in statistics and has proven to be very flexible in practice for modeling data from several areas, such as climatology, meteorology medicine, reliability and image processing data, among others~\cite{stacy1962}. A random variable X follows Stacy's model if its probability density function (PDF) is given by
\begin{equation}\label{denspgg}
f(x|\boldsymbol{\theta})= \alpha \mu^{\alpha\phi}x^{\alpha\phi-1}\exp\left(-(\mu x)^{\alpha}\right)/\Gamma(\phi) , \quad x>0
\end{equation}
where $\Gamma(\phi)=\int_{0}^{\infty}{e^{-x}x^{\phi-1}dx}$ is the gamma function, $\boldsymbol{\theta}=(\phi,\mu,\alpha)$, $\alpha>0$ and $\phi >0$ are the shape parameters and $\mu >0$ is a scale parameter. The Stacy's model unify many important distributions, as shown in Table \ref{t1d}.

\begin{table}[!h]
\caption{Distributions included in the Stacy family of distributions (see equation~\ref{denspgg}).}
\centering 
\begin{center}
  \begin{tabular}{ c | c | c | c}
    \hline
    Distribution  & $\ \ \ \mu \ \ \ $ & $\ \ \ \phi \ \ \ $ & $\ \ \  \alpha \ \ \ $ \\ \hline
        Exponential & $\cdot$  & 1 & 1 \\ 
        Rayleigh & $\cdot$  & 1 & 2 \\
        Haf-Normal & $\cdot$ & 0.5 & 2 \\
        Maxwell Boltzmann & $\cdot$ & $\frac{3}{2}$ & 2 \\
        scaled chi-square & $\cdot$ & 0.5n & 1 \\
        chi-square & 2 & 0.5n & 1 \\
    Weibull     &  $\cdot$  & 1  & $\cdot$ \\ 
        Generalized Haf-Normal  &  $\cdot$  & 2  & $\cdot$ \\ 
        Gamma     &  $\cdot$ & $\cdot$  & $1$ \\ 
        Erlang & $\cdot$ & $n$ & $\cdot$\\
        Nakagami     &  $\cdot$ & $\cdot$  & $2$ \\
        Wilson-Hilferty  &  $\cdot$ & $\cdot$  & $3$ \\ 
        Lognormal & $\cdot$ & $\phi\rightarrow\infty$ & $\cdot$ \\ \hline
  \end{tabular}\label{t1d}
\end{center}
\vspace{-0.55cm}
\begin{flushright}$n\in\N \quad \quad \quad\quad \quad \quad\quad \quad$ \end{flushright}
\end{table}

The inference procedures related to the parameters are conducted using the joint posterior distribution for $\boldsymbol{\theta}$ that is given by the product of the likelihood function and the prior distribution $\pi(\boldsymbol{\theta})$ divided by a normalizing constant $d(\boldsymbol{x})$, resulting in
\begin{equation}\label{posteriord1}
p(\boldsymbol{\theta|x})=\frac{\pi(\boldsymbol{\theta})}{d(\boldsymbol{x})}\frac{\alpha^{n}}{\Gamma(\phi)^n}\left\{\prod_{i=1}^n{x_i^{\alpha\phi-1}}\right\}\mu^{n\alpha\phi}\exp{\left\{-\mu^{\alpha}\sum_{i=1}^n x_i^\alpha\right\}},
\end{equation}
where
\begin{equation}\label{cposteriord1}
d(\boldsymbol{x})=\int\limits_{\mathcal{A}}\pi(\boldsymbol{\theta})\frac{\alpha^{n}}{\Gamma(\phi)^n}\left\{\prod_{i=1}^n{x_i^{\alpha\phi-1}}\right\}\mu^{n\alpha\phi}\exp{\left\{-\mu^{\alpha}\sum_{i=1}^n x_i^\alpha\right\}}d\boldsymbol{\theta}
\end{equation}
and $\mathcal{A}=\{(0,\infty)\times(0,\infty)\times(0,\infty)\}$ is the parameter space of $\boldsymbol{\theta}$. Considering any prior in the form 
$\pi\left(\boldsymbol{\theta}\right)\propto \pi(\mu)\pi(\alpha)\pi(\phi),$
our main aim is to analyze the asymptotic behavior of the priors that leads to power-law distributions allowing to find necessary and sufficient conditions for the posterior to be proper, i.e., $d(\boldsymbol{x})<\infty$.  

In order to study such asymptotic behavior the following definitions and propositions will be useful to prove the results related to the posterior distribution. Let $\overline{\mathbb{R}} = \mathbb{R}\cup \{-\infty, \infty\}$ denote the \textit{extended real number line} with the usual order $(\geq)$, let $\mathbb{R}^+$ denote the positive real numbers and ${\mathbb{R}}_0^+$ denote the positive real numbers including $0$,  and denote $\overline{\mathbb{R}}^+$ and $\overline{\mathbb{R}}_{0}^+ $ analogously. Moreover, if $M\in \mathbb{R}^+$ and $a\in \overline{\mathbb{R}}^+$, we define $M\cdot a$ as the usual product if $a\in\mathbb{R}$, and $M\cdot a=\infty$ if $a=\infty$.

\begin{definition}\label{definition0a}
Let $a\in \overline{\mathbb{R}}_0^+ $ and $b\in \overline{\mathbb{R}}_0^+$. We say that  $a \lesssim b$ if there exist $M\in \mathbb{R}^+$ such that $a\leq M\cdot b$.  If $a \lesssim b$ and $b \lesssim a$ then we say that $a \propto b$. 
\end{definition}

In other words, by the Definition \ref{definition0a} we have that $a \lesssim b$ if either $a<\infty$ or $b=\infty$, and we have that $a\propto b$ if either $a<\infty$ and $b<\infty$, or $a=b=\infty$.

\begin{definition}\label{definition0} Let $\f{g}:\mathcal{U}\to\overline{\mathbb{R}}_0^+$ and $\f{h}:\mathcal{U}\to\overline{\mathbb{R}}_0^+$, where $\mathcal{U}\subset\mathbb{R}$. We say that $\f{g}(x)\lesssim \f{h}(x)$ if there exist $M\in \mathbb{R}^+$ such that $\f{g}(x) \leq M\f{h}(x)$ for every $x\in \mathcal{U}$. If $\f{g}(x) \lesssim \f{h}(x)$ and $\f{h}(x) \lesssim \f{g}(x)$ then we say that $\f{g}(x) \propto \f{h}(x)$.
\end{definition}

\begin{definition}\label{definition1}
Let $\mathcal{U}\subset\mathbb{R}$, $a\in \overline{\mathcal{U}}\cup\{\infty\}$, $\f{g}:\mathcal{U}\to\mathbb{R^+}$ and $\f{h}:\mathcal{U}\to\mathbb{R^+}$. We say that $\f{g}(x)\underset{x\to a}{\lesssim} \f{h}(x)$ if
$\limsup_{x\to a} \dfrac{\f{g}(x)}{\f{h}(x)} < \infty  \,$. If $\f{g}(x)\underset{x\to a}{\lesssim} \f{h}(x)$ and $\f{h}(x)\underset{x\to a}{\lesssim} \f{g}(x)$ then we say that $\f{g}(x)\underset{x\to a}{\propto} \f{h}(x)$.
\end{definition}

The meaning of the relations $\f{g}(x)\underset{x\to a^+}{\lesssim} \f{h}(x)$ and $\f{g}(x)\underset{x\to a^-}{\lesssim} \f{h}(x)$ for $a\in \mathbb{R}$ are defined analogously. Note that, if for some $d\in \mathbb{R}^+$ we have $\lim_{x\to c} \dfrac{\f{g}(x)}{\f{h}(x)} = d$, then it follows directly that $\f{g}(x)\underset{x\to c}{\propto} \f{h}(x)$. The following proposition is a direct consequence of the above definition.

\begin{proposition}\label{properties} Let $a\in \R$, $b\in \overline{\R}$, $c\in[a,b]$, $r\in\R^+$, and let $f_1(x)$, $f_2(x)$, $g_1(x)$ and $g_2(x)$ be continuous functions with domain $(a,b)$ such that $f_1(x)\underset{x\to c}{\lesssim} f_2(x)$ and $g_1(x)\underset{x\to c}{\lesssim} g_2(x)$. Then the following hold
\begin{equation*}
f_1(x)g_1(x)\underset{x\to c}{\lesssim}f_2(x)g_2(x) \quad \mbox{ and } \quad f_1(x)^r\underset{x\to c}{\lesssim}f_2(x)^r.
\end{equation*}

\end{proposition}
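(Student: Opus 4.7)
The plan is to simply unpack the definition of $\underset{x\to c}{\lesssim}$ as a finite $\limsup$ condition and then use the standard fact that the $\limsup$ of a product (respectively, a continuous power) of positive functions with finite $\limsup$ is again finite. Concretely, set
\[
L_1 := \limsup_{x\to c} \frac{f_1(x)}{f_2(x)}, \qquad L_2 := \limsup_{x\to c} \frac{g_1(x)}{g_2(x)},
\]
both finite by hypothesis. Since all the $f_i,g_i$ take values in $\mathbb{R}^+$, the ratios are well-defined positive functions on $(a,b)$.

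Next I would invoke the standard characterization of $\limsup$: for any $\varepsilon>0$ there exists a (one-sided or deleted, depending on whether $c$ is an endpoint, interior point or $\infty$) neighborhood $U_1\subset(a,b)$ of $c$ on which $f_1(x)/f_2(x)<L_1+\varepsilon$, and similarly a neighborhood $U_2$ on which $g_1(x)/g_2(x)<L_2+\varepsilon$. Taking $\varepsilon=1$ for definiteness and intersecting, on $U_1\cap U_2$ (which is again a deleted neighborhood of $c$ in $(a,b)$, since $c\in[a,b]\cup\{\infty\}$ is a limit point of the domain) we have
\[
\frac{f_1(x)g_1(x)}{f_2(x)g_2(x)} \;=\; \frac{f_1(x)}{f_2(x)}\cdot\frac{g_1(x)}{g_2(x)} \;<\; (L_1+1)(L_2+1).
\]
Taking $\limsup$ as $x\to c$ yields $\limsup_{x\to c} (f_1g_1)/(f_2g_2) \le (L_1+1)(L_2+1)<\infty$, which is the first claim.

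For the power statement, on the same neighborhood $U_1$ I would use positivity of $f_1,f_2$ to write $f_1(x)^r/f_2(x)^r=(f_1(x)/f_2(x))^r$, and then monotonicity of $t\mapsto t^r$ on $\mathbb{R}_0^+$ (valid since $r>0$) to conclude $(f_1(x)/f_2(x))^r<(L_1+1)^r$. Taking $\limsup$ gives $\limsup_{x\to c} f_1^r/f_2^r\le(L_1+1)^r<\infty$.

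This is essentially a routine unpacking of definitions, so there is no substantive obstacle; the only care required is in interpreting "neighborhood of $c$" uniformly across the cases $c\in(a,b)$, $c=a$, $c=b$, and $c=\infty$ (when $b=\infty$), and in checking that $U_1\cap U_2$ is a valid deleted neighborhood in each case. Once that is observed, the argument is just the elementary inequality $\limsup(uv)\le(\limsup u)(\limsup v)$ for positive functions with finite limsups, specialized to the ratios $u=f_1/f_2$ and $v=g_1/g_2$.
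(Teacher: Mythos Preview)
Your argument is correct: it is precisely the routine unpacking of Definition~2.3 that the paper alludes to when it states (without further proof) that this proposition ``is a direct consequence of the above definition.'' There is nothing more to compare, as the paper provides no explicit argument beyond that remark.
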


 The following proposition relates Definition \ref{definition0} and Definition \ref{definition1}.

\begin{proposition}\label{proportional1}
Let $\f{g}:(a,b)\to\mathbb{R}^+_0$ and $\f{h}:(a,b)\to\mathbb{R^+}$ be continuous functions on $(a,b)\subset\mathbb{R}$, where $a\in\mathbb{R}$ and $b\in\overline{\mathbb{R}}$. Then $\f{g}(x)\lesssim \f{h}(x)$ if and only if $\f{g}(x)\underset{x\to a}{\lesssim} \f{h}(x)$ and $\f{g}(x)\underset{x\to b}{\lesssim} \f{h}(x)$.
\end{proposition}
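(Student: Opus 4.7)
The plan is to prove both implications directly from the definitions. The forward direction is essentially a tautology, while the backward direction rests on splitting $(a,b)$ into two endpoint tails, where the $\limsup$ hypotheses control the ratio, plus a compact middle interval, where continuity and the extreme value theorem take over.

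For the forward direction, assume $\f{g}(x)\lesssim \f{h}(x)$ on $(a,b)$, so there exists $M\in\mathbb{R}^+$ with $\f{g}(x)\leq M\f{h}(x)$ for every $x\in(a,b)$. Since $\f{h}(x)>0$, dividing gives $\f{g}(x)/\f{h}(x)\leq M$ throughout $(a,b)$, whence $\limsup_{x\to a}\f{g}(x)/\f{h}(x)\leq M<\infty$ and similarly at $b$, which is exactly the pair of endpoint relations required.

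For the backward direction, write $L_a:=\limsup_{x\to a}\f{g}(x)/\f{h}(x)$ and $L_b:=\limsup_{x\to b}\f{g}(x)/\f{h}(x)$, both finite by hypothesis. By the definition of $\limsup$, choose $\delta_a>0$ so small that $\f{g}(x)/\f{h}(x)\leq L_a+1$ on $(a,a+\delta_a)$, and choose a cutoff $c_b\in(a,b)$ (taking $c_b=b-\delta_b$ if $b\in\mathbb{R}$, or any sufficiently large real if $b=\infty$) so that $\f{g}(x)/\f{h}(x)\leq L_b+1$ on $(c_b,b)$, arranging also that $a+\delta_a\leq c_b$. Then $[a+\delta_a,c_b]$ is a compact subinterval of $(a,b)$ on which $\f{g}$ is continuous and $\f{h}$ is continuous and strictly positive; the quotient $\f{g}/\f{h}$ is therefore continuous on a compact set and attains a finite maximum $L_m$. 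Setting $M:=\max\{L_a+1,L_b+1,L_m\}$ yields $\f{g}(x)\leq M\f{h}(x)$ on all of $(a,b)$, i.e., $\f{g}(x)\lesssim \f{h}(x)$.

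The only subtlety worth flagging is the case $b=\infty$: the right tail must be carved out using the definition of $\limsup$ at infinity (pick $c_b$ large enough that the ratio stays below $L_b+1$ beyond it) rather than at a finite endpoint. Once this is handled, everything reduces to the extreme value theorem on a compact interval, and the two endpoint hypotheses match exactly what is needed to bound the ratio on the two tails; no deeper obstacle is present.
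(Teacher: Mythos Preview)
Your proof is correct and follows essentially the same approach as the paper's own proof: both directions are handled identically in structure, with the backward direction using the $\limsup$ hypotheses to control the ratio on two endpoint tails and the Weierstrass extreme value theorem to bound it on a compact middle interval. Your choice of $L_a+1$ and $L_b+1$ (rather than the paper's $\tfrac{3w}{2}$ and $\tfrac{3v}{2}$) is a cosmetic difference and in fact handles the possibility $L_a=0$ or $L_b=0$ slightly more cleanly.
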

\begin{proof} See Appendix \ref{propsitionaprof}.\end{proof}

Note that if $\f{g}:(a,b)\to\mathbb{R^+}$ and $\f{h}:(a,b)\to\mathbb{R^+}$ are continuous functions on $(a,b)\subset\mathbb{R}$, then by continuity it follows directly that $\lim_{x\to c} \dfrac{\f{g}(x)}{\f{h}(x)} = \dfrac{\f{g}(c)}{\f{h}(c)} > 0$ and therefore $\f{g}(x)\underset{x\to c}{\propto} \f{h}(x)$ for every $c\in (a,b)$. This fact and the Proposition \ref{proportional1} imply directly the following.

\begin{proposition}\label{proposition1} Let $\f{g}:(a,b)\to\mathbb{R^+}$ and $\f{h}:(a,b)\to\mathbb{R^+}$ be continuous functions in $(a,b)\subset\mathbb{R}$, where $a\in\mathbb{R}$ and $b\in\overline{\mathbb{R}}$, and let $c\in(a,b)$. Then if $\f{g}(x)\underset{x\to a}{\lesssim} \f{h}(x)$ (or $\f{g}(x)\underset{x\to b}{\lesssim} \f{h}(x)$) we have that $\int_a^c g(t)\; dt \lesssim \int_a^c h(t)\; dt$ (respectively $\int_c^b g(t)\; dt \lesssim \int_c^b h(t)\; dt \,$).
\end{proposition}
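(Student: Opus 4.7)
The plan is to reduce the statement to a direct application of Proposition~\ref{proportional1} followed by monotonicity of the integral. I will prove the left-endpoint assertion $\int_a^c g(t)\,dt \lesssim \int_a^c h(t)\,dt$; the right-endpoint case follows by the symmetric argument on $(c,b)$.

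First, I restrict $g$ and $h$ to the subinterval $(a,c)$, where both functions remain continuous and strictly positive. To invoke Proposition~\ref{proportional1} on this subinterval, I need the two endpoint conditions $g(x)\underset{x\to a}{\lesssim} h(x)$ and $g(x)\underset{x\to c^-}{\lesssim} h(x)$. The first is exactly the hypothesis. For the second, since $c\in(a,b)$ is an interior point of the original domain, continuity and strict positivity of $h$ at $c$ give
\[
\lim_{x\to c^-}\frac{g(x)}{h(x)} \;=\; \frac{g(c)}{h(c)} \;<\; \infty,
\]
and hence $\limsup_{x\to c^-} g(x)/h(x) < \infty$, as required.

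Proposition~\ref{proportional1} then produces a constant $M\in\mathbb{R}^+$ with $g(x)\leq M\,h(x)$ for every $x\in(a,c)$. Integrating this pointwise bound yields
\[
\int_a^c g(t)\,dt \;\leq\; M\int_a^c h(t)\,dt,
\]
and by Definition~\ref{definition0a} this is precisely $\int_a^c g(t)\,dt \lesssim \int_a^c h(t)\,dt$: if the right-hand side is finite then so is the left, and if the left-hand side is infinite then so must be the right.

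I do not anticipate any real obstacle. The only point worth flagging is that Proposition~\ref{proportional1}, although stated at the two endpoints of the full domain of the functions involved, is freely applicable to any subinterval once the two local controls at its endpoints are verified; at an interior point of the original domain such a control is automatic from continuity and strict positivity of $h$, and the remaining step is only the monotonicity of the integral of nonnegative continuous functions.
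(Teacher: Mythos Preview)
Your proof is correct and follows essentially the same route the paper sketches: use continuity and positivity at the interior point $c$ to secure the second endpoint control, apply Proposition~\ref{proportional1} on the subinterval to obtain a uniform bound $g\leq M h$, and then integrate. The paper states only that Proposition~\ref{proposition1} follows directly from Proposition~\ref{proportional1} together with the observation preceding it; you have simply written out those details.
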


\subsection{Case when $\alpha$ is known}\label{subsec21}

Let $p(\boldsymbol{\theta|x},\alpha)$ be of the form (\ref{posteriord1}) but considering $\alpha$ fixed and $\boldsymbol{\theta}=(\phi,\mu)$, the normalizing constant is given by
\begin{equation}\label{postthealpha22th}
\begin{aligned}
d(\boldsymbol{x};\alpha)  \propto \int\limits_{\mathcal{A}}\frac{\pi(\boldsymbol{\theta})}{\Gamma(\phi)^n}\left\{\prod_{i=1}^n{x_i^{\alpha\phi-1}}\right\}\mu^{n\alpha\phi}\exp{\left\{-\mu^{\alpha}\sum_{i=1}^n x_i^\alpha\right\}}d\boldsymbol{\theta},
\end{aligned}
\end{equation}
where $\mathcal{A}=\{(0,\infty)\times(0,\infty)\}$ is the parameter space.  Here our purpose reduce to analyze $\pi\left(\boldsymbol{\theta}\right)\propto \pi(\mu)\pi(\phi)$ and find necessary and sufficient conditions for $d(\boldsymbol{x};\alpha)<\infty$.

\begin{theorem}\label{fundteo1alpha}  Suppose that $\pi(\mu,\phi)<\infty$ for all $(\mu,\phi)\in \R_+^2$, that $n\in \N^+$, and suppose that $\pi(\mu,\phi)=\pi(\mu)\pi(\phi)$ and the priors have asymptotic power-law behaviors with
\begin{equation*}
\pi(\mu) \lesssim \mu^{k}, \ \ \  \pi(\phi) \underset{\phi\to 0^+}{\lesssim} \phi^{r_0}\quad\mbox{ and }\quad \pi(\phi) \underset{\phi\to \infty}{\lesssim} \phi^{r_\infty},
\end{equation*}
such that $k = -1$ with $n>-r_0$, or $k >-1$ with $n>-r_0-1$, then $p(\boldsymbol{\theta|x})$ is proper.
\end{theorem}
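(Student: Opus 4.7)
The plan is to integrate out $\mu$ in closed form using the global bound $\pi(\mu)\leq M\mu^{k}$ that Definition \ref{definition0} provides, and then invoke Proposition \ref{proportional1} to reduce propriety of the posterior to two independent asymptotic checks on the resulting one-dimensional integrand in $\phi$: one as $\phi\to 0^+$ and one as $\phi\to\infty$. The hypothesis that $\pi(\mu,\phi)$ is finite on $\R_+^2$ guarantees the integrand is locally bounded, so only the boundary behaviour can obstruct convergence.

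First I would swap the order of integration in (\ref{postthealpha22th}) (permitted by positivity) and evaluate
\begin{equation*}
\int_0^\infty \mu^{k+n\alpha\phi}\,e^{-\mu^\alpha S}\,d\mu \;=\; \frac{1}{\alpha}\,S^{-(k+1)/\alpha - n\phi}\,\Gamma\!\left(\tfrac{k+1}{\alpha}+n\phi\right), \qquad S=\sum_{i=1}^n x_i^{\alpha},
\end{equation*}
which converges for every $\phi>0$ because $k\geq -1$ and $n\phi>0$. Collecting the remaining $\boldsymbol{x}$-dependent factors via $\prod_i x_i^{\alpha\phi-1}\cdot S^{-n\phi}=(\prod_i x_i)^{-1}\eta^{\phi}$ with $\eta:=\prod_i x_i^\alpha/S^{n}$, the task reduces to showing
\begin{equation*}
\int_0^\infty F(\phi)\,d\phi<\infty,\qquad F(\phi):=\frac{\pi(\phi)\,\Gamma\!\left(\tfrac{k+1}{\alpha}+n\phi\right)}{\Gamma(\phi)^n}\,\eta^{\phi}.
\end{equation*}

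By Proposition \ref{proportional1}, integrability of $F$ on $(0,\infty)$ is equivalent to $F(\phi)\underset{\phi\to 0^+}{\lesssim}\phi^{s_0}$ with $s_0>-1$ together with integrability at infinity. Near $\phi=0^+$ I would use the classical $\Gamma(\phi)\underset{\phi\to 0^+}{\propto}\phi^{-1}$ (so $\Gamma(\phi)^{-n}\propto \phi^{n}$), the fact that $\eta^\phi\to 1$, and the hypothesis $\pi(\phi)\underset{\phi\to 0^+}{\lesssim}\phi^{r_0}$. The Gamma factor $\Gamma((k+1)/\alpha+n\phi)$ is bounded when $k>-1$ and behaves like $\phi^{-1}$ when $k=-1$. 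Combining these using Proposition \ref{properties} yields $F(\phi)\underset{\phi\to 0^+}{\lesssim}\phi^{r_0+n}$ in the first case and $F(\phi)\underset{\phi\to 0^+}{\lesssim}\phi^{r_0+n-1}$ in the second, which are integrable near $0$ exactly under the two alternatives $n>-r_0-1$ and $n>-r_0$ listed in the hypothesis.

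The step I expect to be the main obstacle is the behaviour as $\phi\to\infty$. Here I would apply Stirling's formula to both $\Gamma((k+1)/\alpha+n\phi)$ and $\Gamma(\phi)^n$ and verify that their ratio grows like $C\,n^{n\phi}\,\phi^{(k+1)/\alpha+(n-1)/2}$ up to slowly varying terms, giving
\begin{equation*}
F(\phi)\;\underset{\phi\to\infty}{\lesssim}\; \phi^{\,r_\infty+(k+1)/\alpha+(n-1)/2}\,(n^n\eta)^{\phi}.
\end{equation*}
The AM--GM inequality then provides $n^n\eta=n^n\prod_i x_i^\alpha/S^{n}\leq 1$, with strict inequality whenever the sample is not constant; this exponential gain dominates every polynomial power of $\phi$ and secures integrability at infinity. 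The delicate point, which is really the crux of the argument, is precisely this pairing of Stirling's asymptotics with AM--GM: the polynomial hypothesis $\pi(\phi)\lesssim\phi^{r_\infty}$ alone would not tame the fast-growing ratio of Gamma functions without the combinatorial inequality forcing $n^n\eta<1$.
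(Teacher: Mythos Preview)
Your proposal is correct and follows essentially the same route as the paper: integrate out $\mu$ via the bound $\pi(\mu)\lesssim\mu^k$, then control the resulting one-dimensional $\phi$-integral at $0^+$ (using $\Gamma(\phi)^{-n}\propto\phi^n$ and the dichotomy on $\Gamma\!\left(\tfrac{k+1}{\alpha}+n\phi\right)$ according to whether $k=-1$ or $k>-1$) and at $\infty$ (Stirling plus AM--GM). The only cosmetic difference is that the paper absorbs the Stirling and AM--GM steps into its Proposition~\ref{lim2b} via the auxiliary functions $\f p(\alpha)=\log\bigl(\tfrac{1}{n}\sum x_i^\alpha\big/\sqrt[n]{\prod x_i^\alpha}\bigr)>0$ and $\f q(\alpha)=\f p(\alpha)+\log n$, so that your decay factor $(n^n\eta)^\phi$ appears there as $e^{-n\f p(\alpha)\phi}$; both arguments require the data not to be identically constant for the strict inequality $n^n\eta<1$ (equivalently $\f p(\alpha)>0$) to hold.
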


\begin{proof} See Appendix \ref{ctheoremaalph1}.
\end{proof}

\begin{theorem}\label{fundteo2alpha} Suppose that $\pi(\mu,\phi)>0$ $\forall (\mu,\phi)\in \R_+^2$, $n\in \N^+$, $\pi(\mu,\phi)\gtrsim \pi(\mu)\pi(\phi)$ and the priors have asymptotic power-law behaviors where $\pi(\mu) \gtrsim \mu^{k}$ and one of the following hold:
\begin{itemize}
\item[i)] $k< -1$; or
\item[ii)] $k>-1$ where $\pi(\phi) \underset{\phi\to 0^+}{\gtrsim} \phi^{r_0}$ with $n\leq -r_0-1$; or
\item[iii)] $k=-1$ where $\pi(\phi) \underset{\phi\to 0^+}{\gtrsim} \phi^{r_0}$ with $n\leq-r_0$,
\end{itemize}
then $p(\boldsymbol{\theta|x})$ is improper. 
\end{theorem}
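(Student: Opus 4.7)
The plan is to bound $d(\boldsymbol{x};\alpha)$ from below by plugging the hypothesized inequalities $\pi(\mu,\phi)\gtrsim \pi(\mu)\pi(\phi)$ and $\pi(\mu)\gtrsim \mu^k$ into (\ref{postthealpha22th}) and invoking Tonelli to write
\[
d(\boldsymbol{x};\alpha)\gtrsim \int_0^\infty \frac{\pi(\phi)\prod_{i=1}^n x_i^{\alpha\phi-1}}{\Gamma(\phi)^n}\left(\int_0^\infty \mu^{k+n\alpha\phi}e^{-\mu^\alpha S}\,d\mu\right)d\phi,
\]
where $S=\sum_{i=1}^n x_i^\alpha>0$ is a positive constant. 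Showing that this lower bound equals $+\infty$ in each of the three cases reduces the theorem to two separate regimes: divergence at $\mu\to 0$ (case (i)) or divergence at $\phi\to 0^+$ (cases (ii) and (iii)).

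For case (i), $k<-1$ allows me to pick $\phi_0=-(k+1)/(n\alpha)>0$; for every $\phi\in(0,\phi_0)$ the exponent $k+n\alpha\phi<-1$ and $e^{-\mu^\alpha S}\gtrsim 1$ on $(0,1]$, so Proposition \ref{proposition1} forces the inner $\mu$-integral to be $+\infty$. Since $\pi(\phi)\prod x_i^{\alpha\phi-1}/\Gamma(\phi)^n$ is strictly positive on $(0,\phi_0)$, integrating $+\infty$ over this set of positive Lebesgue measure yields $+\infty$.

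For cases (ii) and (iii) the inner $\mu$-integral is finite for every $\phi>0$; I would evaluate it in closed form via the substitution $t=\mu^\alpha S$ to get $\alpha^{-1}S^{-(k+1)/\alpha-n\phi}\Gamma((k+1)/\alpha+n\phi)$. As $\phi\to 0^+$ the factors $S^{-(k+1)/\alpha-n\phi}$ and $\prod x_i^{\alpha\phi-1}$ tend to positive finite constants, while $1/\Gamma(\phi)^n\underset{\phi\to 0^+}{\propto}\phi^n$ using $\Gamma(\phi)\sim 1/\phi$, and $\pi(\phi)\underset{\phi\to 0^+}{\gtrsim}\phi^{r_0}$ by hypothesis. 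In case (ii), $k>-1$ gives $\Gamma((k+1)/\alpha+n\phi)\to \Gamma((k+1)/\alpha)\in(0,\infty)$, so the outer integrand is asymptotically $\gtrsim \phi^{r_0+n}$ near $0^+$, and $n\leq -r_0-1$ forces non-integrability at $0^+$. In case (iii), $k=-1$ makes the Gamma factor $\Gamma(n\phi)\sim 1/(n\phi)$, shifting the asymptotic exponent to $r_0+n-1$, which is non-integrable at $0^+$ precisely when $n\leq -r_0$. In both cases, Proposition \ref{proposition1} transfers the asymptotic lower bound into divergence of the $\phi$-integral.

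The main technical care is bookkeeping the $\gtrsim$ chain: repeatedly applying Proposition \ref{properties} to combine multiplicative lower bounds, and ensuring that the asymptotic $\gtrsim$ of Definition \ref{definition1} is used consistently on a common neighborhood of $\phi=0^+$ where each asymptotic hypothesis is simultaneously in force. Once this is in order, the argument reduces to elementary gamma-function asymptotics and the standard fact that $\int_0^1 \phi^s\,d\phi=\infty$ exactly when $s\leq -1$.
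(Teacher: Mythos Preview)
Your proposal is correct and follows essentially the same approach as the paper: for case (i) you show the inner $\mu$-integral diverges on an interval of $\phi$-values of positive measure, and for cases (ii)--(iii) you integrate out $\mu$ in closed form as a Gamma function, then use the asymptotics $\Gamma(\phi)^{-n}\underset{\phi\to 0^+}{\propto}\phi^n$ and $\Gamma(n\phi)\underset{\phi\to 0^+}{\propto}\phi^{-1}$ to reduce to a power-law divergence at $\phi=0^+$. The paper does the same, merely packaging the ratio $(\prod x_i^\alpha)^\phi/(\sum x_i^\alpha)^{n\phi}$ as $e^{-n\f{q}(\alpha)\phi}$ and invoking equation~(\ref{eqmain23alpha}) rather than rewriting the $\mu$-integral explicitly.
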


\begin{proof}
See Appendix \ref{ctheoremaalph2}
\end{proof}

\begin{theorem}\label{maintheorem2a}
Let $\pi(\phi,\mu)=\pi(\phi)\pi(\mu)$ and the behavior of $\pi(\mu)$, $\pi(\phi)$ follows asymptotic power-law distributions given by 
\begin{equation*}
\pi(\mu) \propto \mu^{k}, \quad \pi(\phi) \underset{\mu\to 0^+}{\propto} \phi^{r_0}\quad\mbox{ and}\quad \pi(\phi) \underset{\phi\to \infty}{\propto} \phi^{r_\infty},
\end{equation*}
for $k\in \mathbb{R}$, $r_0\in\mathbb{R}$ and $r_\infty\in\mathbb{R}$. The posterior related to $\pi(\phi,\mu)$ is proper if and only if $k = -1$ with $n>-r_0$, or $k >-1$ with $n>-r_0-1$, and in this case the posterior mean of $\phi$ and $\mu$ are finite, as well as all moments.
\end{theorem}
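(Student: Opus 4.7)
The plan is to reduce Theorem \ref{maintheorem2a} entirely to the preceding Theorems \ref{fundteo1alpha} and \ref{fundteo2alpha}, using the fact that $f\propto g$ unpacks into both $f\lesssim g$ and $f\gtrsim g$. The iff on propriety will follow by matching the two directions to the two theorems; finiteness of the posterior moments will follow by absorbing the factor $\mu^s\phi^t$ into the prior and reapplying Theorem \ref{fundteo1alpha} to the perturbed problem.

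For the sufficient direction, the hypotheses $\pi(\mu)\propto\mu^k$, $\pi(\phi)\underset{\phi\to 0^+}{\propto}\phi^{r_0}$ and $\pi(\phi)\underset{\phi\to\infty}{\propto}\phi^{r_\infty}$ immediately give the corresponding $\lesssim$ inequalities, and the separable prior $\pi(\phi,\mu)=\pi(\phi)\pi(\mu)$ is finite on $\R_+^2$. Thus Theorem \ref{fundteo1alpha} applies verbatim and yields propriety under either ``$k=-1$ with $n>-r_0$'' or ``$k>-1$ with $n>-r_0-1$''.

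For the necessary direction I would argue by contraposition. If neither propriety condition holds, then $k$ lies in exactly one of three mutually exclusive regimes: $k<-1$; or $k=-1$ with $n\leq -r_0$; or $k>-1$ with $n\leq -r_0-1$. These coincide precisely with hypotheses (i), (iii) and (ii) of Theorem \ref{fundteo2alpha}, and the $\gtrsim$ halves of the $\propto$ assumptions furnish the remaining inputs. Theorem \ref{fundteo2alpha} then forces $p(\boldsymbol{\theta}\mid\boldsymbol{x})$ to be improper, contradicting the assumption.

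Finally, for finiteness of posterior moments, fix $s,t\in\N$ and observe that $E[\mu^s\phi^t\mid\boldsymbol{x}]$ is a positive constant multiple of $d(\boldsymbol{x};\alpha)^{-1}$ times the integral in (\ref{postthealpha22th}) with $\pi(\boldsymbol{\theta})$ replaced by the modified prior $\tilde\pi(\mu,\phi):=\mu^s\pi(\mu)\cdot\phi^t\pi(\phi)$. Hence the moment is finite exactly when the posterior built from $\tilde\pi$ is proper. This modified prior has asymptotic exponents $k':=k+s$, $r_0':=r_0+t$ and $r_\infty':=r_\infty+t$, and since $s,t\geq 0$ the propriety region of Theorem \ref{fundteo1alpha} is preserved: if $k=-1$ and $n>-r_0$, then $s=0$ gives $k'=-1$ with $n>-r_0\geq -r_0-t$, while $s\geq 1$ gives $k'>-1$ with $n>-r_0>-r_0-t-1$; the remaining case $k>-1$ is handled analogously. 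Applying Theorem \ref{fundteo1alpha} to $\tilde\pi$ then delivers the claim. The main obstacle throughout is this final bookkeeping---one must verify that inflating $k$ and $r_0$ by the moment orders cannot push the modified problem out of the propriety region---but the short case analysis above resolves it.
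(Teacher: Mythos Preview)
Your proposal is correct and follows essentially the same approach as the paper: both directions of the iff are read off from Theorems \ref{fundteo1alpha} and \ref{fundteo2alpha}, and finiteness of moments is obtained by absorbing the monomial $\mu^s\phi^t$ into a modified prior and reapplying Theorem \ref{fundteo1alpha} with shifted exponents. The only cosmetic difference is that the paper handles moments inductively (first $E[\phi\mid\boldsymbol x]$ and $E[\mu\mid\boldsymbol x]$, then products), whereas you treat the general $(s,t)$ case directly; your contrapositive for necessity is also slightly more explicit than the paper's.
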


    \begin{proof}
Since the posterior is proper, by Theorem \ref{fundteo1alpha} we have that $k= -1$ with $n>-r_0$ or $k>-1$ with $n>-r_0-1$. 

Let $\pi^*(\phi,\mu)=\phi\pi(\phi,\mu)$. Then $\pi^*(\phi,\mu)=\pi^*(\phi)\pi^*(\mu)$, where $\pi^*(\phi)=\phi\pi(\phi)$ and $\pi^*(\mu)=\pi(\mu)$, and we have
\begin{equation*}
\pi^*(\mu)\propto \mu^{k}, \quad \pi^*(\phi) \underset{\phi\to 0^+}{\propto} \phi^{r_0+1}\quad\mbox{ and}\quad \pi^*(\phi) \underset{\phi\to \infty}{\propto} \phi^{r_\infty+1}.
\end{equation*}

Since $k= -1$ with $n>-r_0>-(r_0+1)$ or $k>-1$ with $n>-(r_0+1)-1$, it follows from Theorem \ref{fundteo1alpha} that the posterior 
\begin{equation*}
\begin{aligned}
\pi^*(\phi,\mu)\frac{\alpha^{n}}{\Gamma(\phi)^n}\left\{\prod_{i=1}^n{x_i^{\alpha\phi-1}}\right\}\mu^{n\alpha\phi}\exp{\left\{-\mu^{\alpha}\sum_{i=1}^n x_i^\alpha\right\}}
\end{aligned}
\end{equation*}
related to the prior $\pi^*(\phi,\mu)$ is proper. Therefore
\begin{equation*}
\begin{aligned}
E[\phi|\boldsymbol{x}]=\int_0^{\infty}\int_0^{\infty}\phi\pi(\phi,\mu)\pi(\boldsymbol{\theta})\frac{\alpha^{n}}{\Gamma(\phi)^n}\left\{\prod_{i=1}^n{x_i^{\alpha\phi-1}}\right\}\mu^{n\alpha\phi}\exp{\left\{-\mu^{\alpha}\sum_{i=1}^n x_i^\alpha\right\}} d\mu d\phi<\infty.
\end{aligned}
\end{equation*}
Analogously one can prove that  
\begin{equation*}
\begin{aligned}
E[\mu|\boldsymbol{x}]=\int_0^{\infty}\int_0^{\infty}\mu\pi(\phi,\mu)\pi(\boldsymbol{\theta})\frac{\alpha^{n}}{\Gamma(\phi)^n}\left\{\prod_{i=1}^n{x_i^{\alpha\phi-1}}\right\}\mu^{n\alpha\phi}\exp{\left\{-\mu^{\alpha}\sum_{i=1}^n x_i^\alpha\right\}} d\mu d\phi<\infty.
\end{aligned}
\end{equation*}

Therefore we have proved that if a prior $\pi(\phi,\mu)$ satisfying the assumptions of the theorem leads to a proper posterior, then the priors $\phi\pi(\phi,\mu)$ and $\mu\pi(\phi,\mu)$ also leads to proper posteriors. It follows by induction that $\phi^r\mu^s\pi(\phi,\mu)$ also leads to proper posteriors for any $r$ and $s \in \N$, which concludes the proof.
\end{proof}

\subsection{Case when $\phi$ is known}\label{sectphiknown}

Let $p(\boldsymbol{\theta|x},\phi)$ be of the form (\ref{posteriord1}) but considering fixed $\phi$ and $\boldsymbol{\theta}=(\mu,\alpha)$, the normalizing constant is given by
\begin{equation}\label{postthephi22th}
\begin{aligned}
d(\boldsymbol{x};\phi)  = \int\limits_{\mathcal{A}}\pi\left(\boldsymbol{\theta}\right)\alpha^{n}\left\{\prod_{i=1}^n{x_i^{\alpha\phi-1}}\right\}\mu^{n\alpha\phi}\exp\left\{-\mu^{\alpha}\sum_{i=1}^n x_i^\alpha\right\}d\boldsymbol{\theta} ,
\end{aligned}
\end{equation}
 where $\mathcal{A}=\{(0,\infty)\times(0,\infty)\}$ is the parameter space.  Let $\pi\left(\boldsymbol{\theta}\right)\propto \pi(\mu)\pi(\alpha)$, our purpose is to find necessary and sufficient conditions where $d(\boldsymbol{x};\phi)<\infty$.  

\begin{theorem}\label{fundteophi1}  Suppose that $\pi(\mu,\alpha)<\infty$ for all $(\mu,\alpha)\in \R_+^2$, that $n\in \N^+$, and suppose that $\pi(\mu,\alpha)=\pi(\alpha)\pi(\mu)$ and the priors have asymptotic power-law behaviors with
\begin{equation*}
\pi(\mu) \lesssim \mu^{k}, \ \ \
\pi(\alpha) \underset{\alpha\to 0^+}{\lesssim} \alpha^{q_0}, \ \ \
\pi(\alpha) \underset{\alpha\to \infty}{\lesssim} \alpha^{q_\infty},
\end{equation*}
such that $k= -1$, $n>-q_0$ and $q_\infty \in \R$. then $p(\boldsymbol{\theta|x})$ is proper.
\end{theorem}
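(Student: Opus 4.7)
The plan is to integrate out $\mu$ first using a gamma-function identity, and then verify finiteness of the remaining $\alpha$-integral using the asymptotic bounds on $\pi(\alpha)$. Fix a data vector $\boldsymbol{x}$; we may assume that not all $x_i$ coincide, which holds with probability one under the model. Set $S(\alpha)=\sum_{i=1}^{n}x_i^{\alpha}$.

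First, since $k=-1$ the hypothesis gives $\pi(\mu)\lesssim \mu^{-1}$ globally, so up to a positive constant the integrand in (\ref{postthephi22th}) is bounded by
\begin{equation*}
\pi(\alpha)\,\alpha^{n}\Bigl\{\prod_{i=1}^{n}x_i^{\alpha\phi-1}\Bigr\}\mu^{n\alpha\phi-1}\exp\bigl(-\mu^{\alpha}S(\alpha)\bigr).
\end{equation*}
The substitution $u=\mu^{\alpha}S(\alpha)$ yields the explicit identity
$\int_{0}^{\infty}\mu^{n\alpha\phi-1}e^{-\mu^{\alpha}S(\alpha)}d\mu=\Gamma(n\phi)/\bigl(\alpha\,S(\alpha)^{n\phi}\bigr),$
so that
\begin{equation*}
d(\boldsymbol{x};\phi)\lesssim \Gamma(n\phi)\prod_{i=1}^{n}x_i^{-1}\int_{0}^{\infty}g(\alpha)\,d\alpha,\qquad g(\alpha):=\pi(\alpha)\,\alpha^{n-1}\,\frac{\prod_{i=1}^{n}x_i^{\alpha\phi}}{S(\alpha)^{n\phi}}.
\end{equation*}

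Next I would analyse $g$ at the two endpoints and apply Proposition \ref{proposition1}. As $\alpha\to 0^{+}$, each $x_i^{\alpha}\to 1$, so $S(\alpha)\to n$ and $\prod x_i^{\alpha\phi}\to 1$; hence the ratio tends to the positive constant $n^{-n\phi}$, and by Proposition \ref{properties},
$g(\alpha)\underset{\alpha\to 0^{+}}{\lesssim}\alpha^{q_0+n-1}.$
Proposition \ref{proposition1} then makes $\int_{0}^{1}g(\alpha)\,d\alpha$ finite precisely when $q_0+n-1>-1$, i.e.\ when $n>-q_0$, which is the hypothesis.

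The asymptotic analysis as $\alpha\to\infty$ is the main obstacle, because the polynomial bound $\pi(\alpha)\lesssim \alpha^{q_\infty}$ alone does not yield integrability on $[1,\infty)$ for arbitrary $q_\infty\in\R$. The idea is to extract an exponentially decaying factor from the kernel itself. Let $M=\max_{i}x_i$ and pick any $j$ with $x_j<M$ (which exists by the nondegeneracy of the data). Then $S(\alpha)\geq M^{\alpha}$ and therefore
\begin{equation*}
\frac{\prod_{i=1}^{n}x_i^{\alpha\phi}}{S(\alpha)^{n\phi}}\leq \prod_{i=1}^{n}\Bigl(\frac{x_i}{M}\Bigr)^{\alpha\phi}\leq \Bigl(\frac{x_j}{M}\Bigr)^{\alpha\phi}=e^{-c\alpha},
\end{equation*}
where $c=\phi\log(M/x_j)>0$. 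Consequently $g(\alpha)\underset{\alpha\to \infty}{\lesssim}\alpha^{q_\infty+n-1}e^{-c\alpha}$, and by Proposition \ref{proposition1} the integral $\int_{1}^{\infty}g(\alpha)\,d\alpha$ is finite for every $q_\infty\in\R$.

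Finally, combining the two endpoint estimates through Proposition \ref{proportional1} (using that $g$ is continuous on $(0,\infty)$) gives $\int_{0}^{\infty}g(\alpha)\,d\alpha<\infty$, and hence $d(\boldsymbol{x};\phi)<\infty$, proving propriety. Beyond the exponential-extraction trick at infinity, the only delicate point is the bookkeeping with the $\lesssim$ relations, which is handled uniformly by Propositions \ref{properties}--\ref{proposition1}.
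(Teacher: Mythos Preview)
Your proof is correct and follows essentially the same strategy as the paper: integrate out $\mu$ via the gamma identity, then split the remaining $\alpha$-integral at $1$, using the polynomial bound $\alpha^{q_0+n-1}$ near $0$ and extracting exponential decay near $\infty$. The only cosmetic difference is that the paper packages the kernel ratio as $e^{-n\f{q}(\alpha)\phi}$ and invokes the asymptotics $\f{q}(\alpha)\underset{\alpha\to\infty}{\propto}\alpha$ from Proposition~\ref{lim2b}, whereas you obtain the exponential factor directly from the elementary bound $S(\alpha)\geq M^{\alpha}$; both routes yield the same $\alpha^{q_\infty+n-1}e^{-c\alpha}$ control on $[1,\infty)$.
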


\begin{proof}
 See Appendix \ref{ctheoremphi1}.
\end{proof}

\begin{theorem}\label{fundteophi2} Suppose that $\pi(\alpha,\mu)>0$ $\forall (\alpha,\mu)\in \R_+^2$ and that $n\in \N^+$, and suppose that
 $\pi(\mu,\alpha)\gtrsim \pi(\mu)\pi(\alpha)$ and the priors have asymptotic power-law behaviors where  $\pi(\mu) \gtrsim \mu^{k}$ and one of the following hold
\begin{itemize}
\item[i)] $k< -1$;
\item[ii)] $k > -1$ such that $\pi(\alpha) \underset{\alpha\to 0^+}{\gtrsim} \alpha^{q_0}$ with $q_0\in \R$; or
\item[iii)] $k = -1$ such that $\pi(\alpha) \underset{\alpha\to 0^+}{\gtrsim} \alpha^{q_0}$ with $n\leq -q_0$
\end{itemize}
then $p(\boldsymbol{\theta|x})$ is improper.
\end{theorem}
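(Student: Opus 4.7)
The plan is, in each of the three cases, to restrict the double integral $d(\boldsymbol{x};\phi)$ to a well-chosen subregion of $\R_+^2$ on which the integrand is bounded below by an explicit non-integrable function of $(\mu,\alpha)$. The hypothesis $\pi(\mu,\alpha)\gtrsim \pi(\mu)\pi(\alpha)$ reduces the joint prior to its product lower bound, and on any bounded strip $\alpha\in(0,\alpha^{\ast})$ the factors $\prod_{i=1}^{n} x_i^{\alpha\phi-1}$ and $\sum_{i=1}^{n} x_i^\alpha$ are continuous in $\alpha$ and hence bounded both above and away from zero; they contribute only positive multiplicative constants that can be absorbed into the $\gtrsim$.

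Cases (i) and (iii) both use the region $\mu\in(0,1)$, $\alpha\in(0,\alpha^{\ast})$, on which $\mu^\alpha\leq 1$, so the likelihood exponential $\exp(-\mu^\alpha\sum_{i} x_i^\alpha)$ is bounded below by a positive constant. In case (i) I take $\alpha^{\ast}=(-1-k)/(n\phi)>0$; then $\pi(\mu)\gtrsim\mu^k$ yields integrand $\gtrsim\pi(\alpha)\alpha^{n}\mu^{k+n\alpha\phi}$, and for every $\alpha$ in the range the exponent $k+n\alpha\phi$ is strictly less than $-1$, so the inner $\mu$-integral is already $+\infty$ and Tonelli forces $d(\boldsymbol{x};\phi)=\infty$. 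In case (iii), $k=-1$ together with $\pi(\alpha)\gtrsim\alpha^{q_0}$ near zero gives integrand $\gtrsim\alpha^{q_0+n}\mu^{n\alpha\phi-1}$; the $\mu$-integral evaluates to $1/(n\alpha\phi)$, reducing the question to whether $\int_{0}^{\alpha^{\ast}}\alpha^{q_0+n-1}\,d\alpha$ diverges at zero, which happens precisely when $n\leq -q_0$.

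Case (ii) is the delicate one: since $k>-1$ makes $\mu^k$ integrable at $0$, the small-$\mu$ strategy fails and one must exploit the large-$\mu$ range instead. I would restrict to $\mu\in(1,e^{1/\alpha})$ with $\alpha\in(0,\alpha^{\ast})$ small. The stretched upper endpoint is chosen so that $\mu^\alpha\leq e$ throughout, which keeps $\mu^\alpha\sum_{i} x_i^\alpha$ uniformly bounded (so the likelihood exponential stays bounded below) while $\mu^{n\alpha\phi}\geq 1$, and the power-law bounds then give integrand $\gtrsim\alpha^{q_0+n}\mu^{k}$. The $\mu$-integral equals $(e^{(k+1)/\alpha}-1)/(k+1)$, which grows faster than any polynomial as $\alpha\to 0^+$; the substitution $u=1/\alpha$ converts $\int_{0}^{\alpha^{\ast}}\alpha^{q_0+n}e^{(k+1)/\alpha}\,d\alpha$ into $\int_{1/\alpha^{\ast}}^{\infty}u^{-q_0-n-2}e^{(k+1)u}\,du=\infty$. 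The main obstacle is precisely this choice of upper endpoint: it must grow fast enough to produce exponential blow-up of the $\mu$-integral, yet slowly enough to keep the likelihood exponential from wiping out the gain, and the scale $1/\alpha$ is exactly the balance that makes both halves of the argument work; once this region is set up, the remaining estimates are elementary.
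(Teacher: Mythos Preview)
Your proposal is correct. Cases (i) and (iii) are essentially the paper's own argument carried out directly: the paper also restricts to small $\alpha$, bounds the exponential factor near $\mu=0$, and shows the inner $\mu$-integral (respectively the remaining $\alpha$-integral $\int_0^1\alpha^{q_0+n-1}\,d\alpha$) already diverges.

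Case (ii) is where you genuinely differ. The paper proceeds by the change of variables $u=\mu^{\alpha}\sum_i x_i^{\alpha}$, rewrites the integrand in terms of the auxiliary function $\f{p}(\alpha)=\log\!\bigl(\tfrac{1}{n}\sum_i x_i^{\alpha}/\sqrt[n]{\prod_i x_i^{\alpha}}\bigr)$, invokes the asymptotic $\f{p}(\alpha)\underset{\alpha\to0^+}{\propto}\alpha^2$ from its Proposition on proportionalities to kill the factor $e^{-\f{p}(\alpha)(n\phi+(k+1)/\alpha)}$, and finally reduces to the divergence of $\int_0^1\alpha^{H}e^{L/\alpha}\,d\alpha$ for $L>0$. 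Your choice of the region $\mu\in(1,e^{1/\alpha})$ reaches the same endpoint integral without any change of variables and without appealing to the $\f{p}(\alpha)$ asymptotics: the constraint $\mu^{\alpha}\le e$ controls the likelihood exponential uniformly, while the upper limit $e^{1/\alpha}$ is just large enough to make $\int_1^{e^{1/\alpha}}\mu^{k}\,d\mu$ blow up like $e^{(k+1)/\alpha}$. This is more elementary and self-contained; the paper's route, on the other hand, ties the argument to machinery (the $\f{p}$ and $\f{q}$ functions) that it reuses throughout the three-parameter theorems, so it is less ad hoc within the paper's overall structure.
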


\begin{proof}
 See Appendix \ref{ctheoremphi2}.
\end{proof}

\begin{theorem}\label{maintheorem2wei}
Let $\pi(\mu,\alpha)=\pi(\mu)\pi(\alpha)$ and the behavior of $\pi(\mu)$, $\pi(\alpha)$  follows asymptotic power-law distributions given by 
\begin{equation*}
\pi(\mu) \propto \mu^{k}, \quad \pi(\alpha) \underset{\mu\to 0^+}{\propto} \alpha^{q_0}\quad\mbox{ and}\quad \pi(\alpha) \underset{\alpha\to \infty}{\propto} \alpha^{q_\infty},
\end{equation*}
for $k\in \mathbb{R}$, $q_0\in\mathbb{R}$ and $q_\infty\in\mathbb{R}$. The posterior related to $\pi(\mu,\alpha)$ is proper if and only if $k = -1$ with $n>-q_0$, and in this case the posterior mean of $\alpha$ is finite for this prior, as well as all moments relative to $\alpha$, and the posterior mean of $\mu$ is not finite.
\end{theorem}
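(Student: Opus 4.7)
The plan is to mirror the strategy of the proof of Theorem \ref{maintheorem2a} but to incorporate the strongly asymmetric roles of $\mu$ and $\alpha$ in the present setting: here the sufficient condition for propriety imposes the tight constraint $k=-1$ on $\mu$, while leaving the exponents $q_0,q_\infty$ of $\alpha$ essentially unrestricted beyond $n>-q_0$. First I would establish the iff characterization of propriety by directly combining Theorem \ref{fundteophi1} (sufficiency) and Theorem \ref{fundteophi2} (necessity). Since $\pi(\mu)\propto\mu^k$ yields both $\pi(\mu)\lesssim\mu^k$ and $\pi(\mu)\gtrsim\mu^k$, and analogously for $\pi(\alpha)$, the cases $k<-1$, $k>-1$, and $k=-1$ with $n\leq -q_0$ all fall under Theorem \ref{fundteophi2}, so the posterior is improper outside the regime $k=-1$ with $n>-q_0$; Theorem \ref{fundteophi1} supplies the converse.

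For the finiteness of all moments of $\alpha$, I would adapt the induction argument from Theorem \ref{maintheorem2a}. Define $\pi^*(\mu,\alpha)=\alpha\pi(\mu,\alpha)=\pi(\mu)\cdot\alpha\pi(\alpha)$. By Proposition \ref{properties}, the shifted marginals still satisfy
\begin{equation*}
\pi^*(\mu)\propto\mu^{-1},\quad \pi^*(\alpha)\underset{\alpha\to 0^+}{\propto}\alpha^{q_0+1},\quad \pi^*(\alpha)\underset{\alpha\to\infty}{\propto}\alpha^{q_\infty+1}.
\end{equation*}
Since $n>-q_0>-(q_0+1)$, the hypotheses of Theorem \ref{fundteophi1} still apply to $\pi^*$, so the associated posterior is proper, which is exactly the statement $E[\alpha|\boldsymbol{x}]<\infty$. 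Iterating with $\pi^{(r)}(\mu,\alpha)=\alpha^r\pi(\mu,\alpha)$ shifts $q_0\mapsto q_0+r$ and $q_\infty\mapsto q_\infty+r$, both of which remain admissible for any $r\in\N$, yielding $E[\alpha^r|\boldsymbol{x}]<\infty$ for every $r$.

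The key contrast, and the step I expect to be the main delicate point, is the treatment of $E[\mu|\boldsymbol{x}]$. Consider $\tilde{\pi}(\mu,\alpha)=\mu\pi(\mu,\alpha)$. Then the marginal exponent becomes $\tilde{\pi}(\mu)\gtrsim\mu^{k+1}=\mu^{0}$, so the shifted $k^{*}=0>-1$; combined with $\tilde{\pi}(\alpha)\gtrsim\alpha^{q_0}$ with $q_0\in\R$, this places $\tilde{\pi}$ squarely into case (ii) of Theorem \ref{fundteophi2}, so the posterior induced by $\tilde{\pi}$ is improper. Since this integral is precisely the numerator of $E[\mu|\boldsymbol{x}]$ while the denominator $d(\boldsymbol{x};\phi)$ is finite by the proved propriety, one concludes $E[\mu|\boldsymbol{x}]=\infty$. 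The crucial asymmetry to emphasize is that a single unit shift in the $\mu$-exponent pushes us off the boundary value $k=-1$ and into the improper regime, whereas shifts of arbitrary order in the $\alpha$-exponent remain within the proper regime because Theorem \ref{fundteophi1} imposes no constraint on $q_\infty$ and only a strict inequality on $q_0$ that survives any integer shift.
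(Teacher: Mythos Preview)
Your proposal is correct and follows essentially the same route as the paper's proof: you combine Theorems~\ref{fundteophi1} and~\ref{fundteophi2} for the iff characterization, then shift the prior by a factor of $\alpha$ (and iterate) to obtain finiteness of all $\alpha$-moments via Theorem~\ref{fundteophi1}, and shift by a factor of $\mu$ to push the exponent to $k^*=0$ and invoke case~(ii) of Theorem~\ref{fundteophi2} for the divergence of $E[\mu|\boldsymbol{x}]$. The paper's argument is identical in structure, only slightly terser in its treatment of the iff part.
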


    \begin{proof}
Since the posterior is proper, by Theorem \ref{fundteophi2} we have that $k= -1$ and $n>-q_0$.

Let $\pi^*(\mu,\alpha)=\alpha\pi(\mu,\alpha)$. Then $\pi^*(\mu,\alpha)=\pi^*(\mu)\pi^*(\alpha)$, where $\pi^*(\alpha)=\alpha\pi(\alpha)$ and $\pi^*(\mu)=\pi(\mu)$, and we have
\begin{equation*}
\pi^*(\mu)\propto \mu^{-1}, \quad \pi^*(\alpha) \underset{\mu\to 0^+}{\propto} \alpha^{q_0+1}\quad\mbox{ and}\quad \pi^*(\alpha) \underset{\alpha\to \infty}{\propto} \alpha^{q_\infty+1}.
\end{equation*}

But since $n>-q_0>-(q_0+1)$ it follows from Theorem \ref{fundteophi1} that the posterior 
\begin{equation*}
\begin{aligned}
\pi^*(\mu,\alpha)\frac{\alpha^{n}}{\Gamma(\phi)^n}\left\{\prod_{i=1}^n{x_i^{\alpha\phi-1}}\right\}\mu^{n\alpha\phi}\exp{\left\{-\mu^{\alpha}\sum_{i=1}^n x_i^\alpha\right\}}
\end{aligned}
\end{equation*}
relative to the prior $\pi^*(\mu,\alpha)$ is proper. Therefore
\begin{equation*}
\begin{aligned}
E[\alpha|\boldsymbol{x}]=\int_0^{\infty}\int_0^{\infty}\alpha\pi(\mu,\alpha)\pi(\boldsymbol{\theta})\frac{\alpha^{n}}{\Gamma(\phi)^n}\left\{\prod_{i=1}^n{x_i^{\alpha\phi-1}}\right\}\mu^{n\alpha\phi}\exp{\left\{-\mu^{\alpha}\sum_{i=1}^n x_i^\alpha\right\}} d\mu d\alpha<\infty.
\end{aligned}
\end{equation*}
Analogously one can prove using the item ii) of the Theorem \ref{fundteophi2}  that  
\begin{equation*}
\begin{aligned}
E[\mu|\boldsymbol{x}]=\int_0^{\infty}\int_0^{\infty}\mu\pi(\mu,\alpha)\pi(\boldsymbol{\theta})\frac{\alpha^{n}}{\Gamma(\phi)^n}\left\{\prod_{i=1}^n{x_i^{\alpha\phi-1}}\right\}\mu^{n\alpha\phi}\exp{\left\{-\mu^{\alpha}\sum_{i=1}^n x_i^\alpha\right\}} d\mu d\alpha=\infty
\end{aligned}
\end{equation*}
since in this case $\mu\pi(\mu)\propto \mu^{0}$.

Therefore we have proved that if a prior $\pi(\mu,\alpha)$ satisfying the assumptions of the theorem leads to a proper posterior, then the prior $\alpha\pi(\mu,\alpha)$ also leads to proper posteriors. It follows by induction that $\alpha^r\pi(\mu,\alpha)$ also leads to proper posteriors for any $r$ in $\N$, which concludes the proof.
\end{proof}

\subsection{General case when $\phi$, $\alpha$ and $\mu$ are unknown}

\begin{theorem}\label{fundteo1}  Suppose that $\pi(\alpha,\beta,\mu)<\infty$ for all $(\alpha,\beta,\mu)\in \R_+^3$, that $n\in \N^+$, and suppose that $\pi(\mu,\alpha,\phi)=\pi(\mu)\pi(\alpha)\pi(\mu)$ and the priors have asymptotic power-law behaviors with
\begin{equation*}
\pi(\mu) \lesssim \mu^{k},   \ \ \ 
\pi(\alpha) \underset{\alpha\to 0^+}{\lesssim} \alpha^{q_0}, \ \ \ \pi(\alpha) \underset{\alpha\to \infty}{\lesssim} \alpha^{q_\infty}, 
\end{equation*}
\vspace{-0.2cm}
\begin{equation*}
\pi(\phi) \underset{\phi\to 0^+}{\lesssim} \phi^{r_0}\quad\mbox{ and }\quad \pi(\phi) \underset{\phi\to \infty}{\lesssim} \phi^{r_\infty},
\end{equation*}
such that $k= -1$, $q_\infty < r_0 $, $ 2r_\infty+1 < q_0$, $n>-q_0$ and $n>-r_0$, then $p(\boldsymbol{\theta|x})$ is proper.
\end{theorem}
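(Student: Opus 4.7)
The plan is to reduce $d(\boldsymbol{x})$ to a two-dimensional integral in $(\alpha,\phi)$ by collapsing the $\mu$-direction (using $k=-1$), then to partition the $(\alpha,\phi)$-quadrant along $\alpha=1$ and $\phi=1$ and verify convergence in each of the four pieces via Proposition \ref{proposition1}.

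\textbf{Step 1 (collapsing $\mu$).} The hypothesis $\pi(\mu)\lesssim\mu^{-1}$ together with the substitution $u=\mu^{\alpha}S_\alpha$, where $S_\alpha:=\sum_i x_i^{\alpha}$, gives
\[
\int_0^{\infty}\pi(\mu)\mu^{n\alpha\phi}e^{-\mu^{\alpha}S_\alpha}d\mu\lesssim\frac{\Gamma(n\phi)}{\alpha\,S_\alpha^{n\phi}}.
\]
Rewriting $\prod_i x_i^{\alpha\phi-1}=(\prod_i x_i)^{-1}(\prod_i x_i)^{\alpha\phi}$ and setting $T_\alpha:=S_\alpha^{n}/(\prod_i x_i)^{\alpha}$, we obtain
\[
d(\boldsymbol{x})\lesssim\frac{1}{\prod_i x_i}\int_0^{\infty}\!\!\int_0^{\infty}\pi(\alpha)\pi(\phi)\,\alpha^{n-1}\frac{\Gamma(n\phi)}{\Gamma(\phi)^n}T_\alpha^{-\phi}\,d\phi\,d\alpha.
\]
By AM--GM one has $T_\alpha\geq n^n$, strictly whenever the $x_i$ are not all equal (a generic assumption we may make).

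\textbf{Step 2 (key asymptotics).} Stirling's formula delivers $\Gamma(n\phi)/\Gamma(\phi)^n\underset{\phi\to 0^+}{\propto}\phi^{n-1}$ and $\Gamma(n\phi)/\Gamma(\phi)^n\underset{\phi\to\infty}{\propto}\phi^{(n-1)/2}n^{n\phi}$. Writing $g(\alpha):=\log(T_\alpha/n^n)\geq 0$ and Taylor-expanding $\log S_\alpha$ at $\alpha=0$, the first-order term cancels exactly against $\alpha\sum\log x_i$, forcing $g(\alpha)\underset{\alpha\to 0^+}{\gtrsim}\alpha^{2}$; meanwhile the eventual dominance of $x_{\max}^{\alpha}$ in $S_\alpha$ yields $g(\alpha)\underset{\alpha\to\infty}{\gtrsim}\alpha$. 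Consequently $T_\alpha^{-\phi}\lesssim e^{-c\phi\alpha^{2}}$ near $\alpha=0$ and $T_\alpha^{-\phi}\lesssim n^{-n\phi}e^{-c'\phi\alpha}$ for large $\alpha$; the factor $n^{-n\phi}$ cancels the exponential $n^{n\phi}$ produced by Stirling at $\phi\to\infty$.

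\textbf{Step 3 (four-quadrant bounds).} Apply Proposition \ref{proposition1} in each piece. On $(0,1]\times(0,1]$ the integrand is bounded by $\alpha^{q_0+n-1}\phi^{r_0+n-1}$, integrable thanks to $n>-q_0$ and $n>-r_0$. On $(1,\infty)\times(0,1]$, the inner integral $\int_0^1\phi^{r_0+n-1}e^{-c'\alpha\phi}d\phi$ contributes $\alpha^{-r_0-n}$, leaving $\int_1^{\infty}\alpha^{q_\infty-r_0-1}d\alpha$, finite iff $q_\infty<r_0$. On $(0,1]\times(1,\infty)$, the inner integral $\int_1^{\infty}\phi^{r_\infty+(n-1)/2}e^{-c\alpha^{2}\phi}d\phi$ contributes $\alpha^{-2r_\infty-n-1}$, leaving $\int_0^1\alpha^{q_0-2r_\infty-2}d\alpha$, finite iff $2r_\infty+1<q_0$. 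On $(1,\infty)\times(1,\infty)$ the super-exponential decay $e^{-c'\alpha\phi}$ makes convergence automatic.

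\textbf{Main obstacle.} The delicate step is pinning down the rate $g(\alpha)\gtrsim\alpha^{2}$ at the origin: this \emph{quadratic} (rather than linear) vanishing, produced by the cancellation of the first-order Taylor term of $\log S_\alpha$ against $\alpha\sum\log x_i$, is precisely what places the factor of two in front of $r_\infty$ in the hypothesis $2r_\infty+1<q_0$. Every other bound is a routine application of Stirling's formula, AM--GM, and the power-law asymptotics of the priors.
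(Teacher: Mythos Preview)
Your argument is correct and follows essentially the same route as the paper's proof. The paper also integrates out $\mu$ using $k=-1$, obtains the same expression $\alpha^{n-1}\dfrac{\Gamma(n\phi)}{\Gamma(\phi)^n}e^{-n\f{q}(\alpha)\phi}$ (your $T_\alpha^{-\phi}$ equals $e^{-n\f{q}(\alpha)\phi}$, and your $g(\alpha)$ equals $n\f{p}(\alpha)$), splits into the same four quadrants, and uses the same Stirling asymptotics $\Gamma(n\phi)/\Gamma(\phi)^n\underset{\phi\to 0^+}{\propto}\phi^{n-1}$, $\underset{\phi\to\infty}{\propto}\phi^{(n-1)/2}n^{n\phi}$ together with $\f{p}(\alpha)\underset{\alpha\to 0^+}{\propto}\alpha^2$, $\f{q}(\alpha)\underset{\alpha\to\infty}{\propto}\alpha$ (the paper packages these in its Proposition~\ref{lim2b}, whereas you derive them inline via Taylor expansion and the dominance of $x_{\max}^\alpha$). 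The four convergence conditions you obtain---$n>-q_0$, $n>-r_0$, $q_\infty<r_0$, $2r_\infty+1<q_0$---arise from exactly the same quadrants and for the same reasons as in the paper. One small wording issue: in Step~2 you write $T_\alpha^{-\phi}\lesssim e^{-c\phi\alpha^2}$ near $\alpha=0$, which discards the factor $n^{-n\phi}$; but in Step~3 you correctly use the cancelled form (and you flag the cancellation explicitly), so the argument goes through.
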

\begin{proof}
See Appendix \ref{ctheoremab2} 
\end{proof}

\begin{theorem}\label{fundteo2} Suppose that $\pi(\alpha,\phi,\mu)>0$ $\forall (\alpha,\phi,\mu)\in \R_+^3$ and that $n\in \N^+$, then the following items are valid
\begin{itemize}
\item[i)] $\pi(\mu,\alpha,\beta)\gtrsim \pi(\mu)\pi(\alpha)\pi(\phi)$ for all $\phi\in [b_0,b_1]$ where $0\leq b_0< b_1$, such that $\pi(\mu)  \gtrsim \mu^{k}$ and one of the following hold
\begin{itemize}
\item[-]  $k< -1$;
\item[-]  $k>-1$; where $\pi(\alpha) \underset{\alpha\to 0^+}{\gtrsim} \alpha^{q_0}$ with $q_0\in \R$; or
\item[-] $k > -1$; where $\pi(\phi) \underset{\phi\to 0^+}{\gtrsim} \phi^{r_0}$ with $n<-r_0-1$ and $b_0=0$.
\end{itemize}
then $p(\boldsymbol{\theta|x})$ is improper.

\item[ii)] $\pi(\mu,\alpha,\beta)\gtrsim \pi(\mu)\pi(\alpha)\pi(\beta)$ such that $\pi(\mu)  \gtrsim \mu^{-1}$ and one of the following occur
\begin{itemize}
\item[-] $\pi(\phi) \underset{\phi\to 0^+}{\gtrsim} \phi^{r_0}$ and $\pi(\alpha)
\underset{\alpha\to \infty}{\gtrsim} \alpha^{q_\infty}$ where either $q_\infty \geq r_0$ or $n\leq -r_0$;
\item[-] $\pi(\alpha) \underset{\alpha\to 0^+}{\gtrsim} \alpha^{q_0}$ and $ \pi(\phi) \underset{\phi\to \infty}{\gtrsim} \phi^{r_\infty}$ where either $ 2r_\infty+1 \geq q_0$ or $n\leq -q_0$;
\end{itemize}
then $p(\boldsymbol{\theta|x})$ is improper.
\end{itemize}
\end{theorem}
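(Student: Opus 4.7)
My approach is to exhibit divergence of $d(\boldsymbol{x})$ by lower-bounding the integrand on a strategically chosen subregion of $\R_+^3$ and reducing to a one- or two-dimensional integral whose divergence can be checked directly. The backbone is the explicit evaluation of the $\mu$-integral via the substitution $u=\mu^\alpha\sum_i x_i^\alpha$, which gives
\begin{equation*}
\int_0^\infty \mu^{k+n\alpha\phi}\exp\!\left(-\mu^\alpha \textstyle\sum_i x_i^\alpha\right) d\mu
\;=\;\frac{\Gamma\!\left((k+1)/\alpha+n\phi\right)}{\alpha\,\bigl(\sum_i x_i^\alpha\bigr)^{(k+1)/\alpha+n\phi}}
\end{equation*}
whenever the gamma argument is positive, and $+\infty$ otherwise. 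Combined with $\pi(\mu)\gtrsim \mu^k$, this feeds a clean lower estimate into the outer $(\alpha,\phi)$-integration.

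For item (i), I would restrict $\phi$ to $[b_0,b_1]$ (where the factorisation lower bound is assumed) and reduce each sub-case to a one-variable problem. The sub-case $k<-1$ is immediate: further restricting $\alpha$ to $(0,\alpha_0)$ with $\alpha_0=-(k+1)/(nb_1)$ forces the gamma argument to be non-positive, so the $\mu$-integral is $+\infty$ on a set of positive $(\alpha,\phi)$-measure, and hence $d(\boldsymbol{x})=\infty$. The sub-case $k>-1$ with $\pi(\alpha)\underset{\alpha\to 0^+}{\gtrsim}\alpha^{q_0}$ uses the explicit formula above: as $\alpha\to 0^+$ the numerator $\Gamma\!\left((k+1)/\alpha+n\phi\right)$ grows faster than any power of $1/\alpha$ by Stirling, dominating the polynomial factor $\alpha^{q_0}$ for every $q_0\in\R$, so the $\alpha$-integral diverges at $0$. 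The third sub-case, with $\pi(\phi)\underset{\phi\to 0^+}{\gtrsim}\phi^{r_0}$, $n<-r_0-1$, and $b_0=0$, is essentially Theorem \ref{fundteo2alpha}(ii) applied for each fixed $\alpha$ in a compact subinterval: $\Gamma(\phi)^{-n}\sim\phi^n$ near zero combines with $\pi(\phi)\gtrsim\phi^{r_0}$ to produce a $\phi^{r_0+n}$ factor not integrable at $0$, after which one integrates over $\alpha$.

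For item (ii), $k=-1$ and the $\mu$-integral collapses to $\Gamma(n\phi)/(\alpha c^{n\phi})$ with $c=\sum_i x_i^\alpha$, yielding the two-dimensional estimate
\begin{equation*}
d(\boldsymbol{x})\;\gtrsim\;\frac{1}{\prod_i x_i}\int_0^\infty\!\int_0^\infty \pi(\alpha)\pi(\phi)\,\frac{\alpha^{n-1}\Gamma(n\phi)}{\Gamma(\phi)^n}\left(\frac{A^\alpha}{B(\alpha)^n}\right)^{\!\phi} d\alpha\,d\phi,
\end{equation*}
with $A=\prod_i x_i$ and $B(\alpha)=\sum_i x_i^\alpha$. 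The ``pure'' sub-cases $n\leq -r_0$ (first bullet) and $n\leq -q_0$ (second bullet) are handled by fixing one variable in a compact set and using the Laurent expansion $\Gamma(n\phi)/\Gamma(\phi)^n\sim \phi^{n-1}/n$ at $\phi=0$, respectively the limits $B(\alpha)\to n$ and $A^{\alpha\phi}\to 1$ at $\alpha=0$, reducing to a pure power-law integral non-integrable at the relevant endpoint. The mixed sub-case $q_\infty\geq r_0$ (first bullet) is handled by the change of variables $(\alpha,\phi)\mapsto(\alpha,t=\alpha\phi)$ with Jacobian $\alpha^{-1}$: letting $\alpha\to\infty$ with $t$ in a bounded interval, one has $B(\alpha)^{n\phi}\sim \max_i(x_i)^{nt}$, and the integrand becomes $\gtrsim \alpha^{q_\infty-r_0-1}\,t^{r_0+n-1}$, divergent in $\alpha$ at infinity iff $q_\infty\geq r_0$.

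The main obstacle I anticipate is the remaining sub-case $2r_\infty+1\geq q_0$ of the second bullet of (ii), where the naive scaling $t=\alpha\phi$ does not suffice. Here the key is a second-order Taylor expansion of the data-dependent ratio,
\begin{equation*}
\frac{A^\alpha}{B(\alpha)^n}\;=\;n^{-n}\exp\!\left(-\tfrac{n\sigma^2}{2}\alpha^2+O(\alpha^3)\right),
\end{equation*}
where $\sigma^2$ is the empirical variance of $\log x_i$, which reveals that the exponential cut-off along the joint limit $\alpha\to 0$, $\phi\to\infty$ scales as $\alpha^2\phi$ rather than $\alpha\phi$. Combined with Stirling's $\Gamma(n\phi)/\Gamma(\phi)^n\sim Cn^{n\phi}\phi^{(n-1)/2}$ as $\phi\to\infty$ and the change of variables $s=\alpha^2\phi$ (Jacobian $\alpha^{-2}$), the integrand becomes $\gtrsim \alpha^{q_0-2r_\infty-2}\,s^{r_\infty+(n-1)/2}e^{-n\sigma^2 s/2}$, whose $\alpha$-integral at $0$ diverges precisely when $q_0\leq 2r_\infty+1$. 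Identifying the correct second-order coefficient is what explains the factor $2$ in the threshold, and the degenerate case $\sigma^2=0$ (all $x_i$ equal) will need separate handling by pushing the Taylor expansion to the next non-trivial order.
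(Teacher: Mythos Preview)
Your proposal is correct and shares the paper's overall strategy---integrate out $\mu$ first, then show the remaining $(\alpha,\phi)$-integral diverges by isolating the relevant asymptotic regime---but the technical execution differs in a way worth recording. The paper packages the data-dependent ratio $\bigl(\prod x_i^\alpha\bigr)\big/\bigl(\sum x_i^\alpha\bigr)^n$ into the functions $\f{p}(\alpha)$ and $\f{q}(\alpha)$ of Proposition~\ref{lim2b}, splits the $(\alpha,\phi)$-quadrant into four pieces $s_1,\ldots,s_4$, evaluates each inner integral as an incomplete gamma, and then reads off the divergence from the pre-established asymptotics $\f{p}(\alpha)\underset{\alpha\to 0^+}{\propto}\alpha^2$, $\f{q}(\alpha)\underset{\alpha\to\infty}{\propto}\alpha$. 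You instead perform the scaling substitutions $t=\alpha\phi$ (for $\alpha\to\infty$, $\phi\to 0$) and $s=\alpha^2\phi$ (for $\alpha\to 0$, $\phi\to\infty$) directly, which makes the critical exponents visible without the incomplete-gamma detour and explains transparently why the factor $2$ appears in the threshold $2r_\infty+1\geq q_0$: it is exactly the order of vanishing of $\f{p}(\alpha)$ at the origin. The paper's route reuses the machinery already built for Theorem~\ref{fundteo1}; yours is more self-contained.

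Two small remarks. First, in the sub-case $k>-1$, $\pi(\alpha)\underset{\alpha\to 0^+}{\gtrsim}\alpha^{q_0}$, your Stirling argument must also absorb the denominator $\bigl(\sum_i x_i^\alpha\bigr)^{(k+1)/\alpha+n\phi}\sim n^{(k+1)/\alpha}$; this is harmless since $\Gamma(m)/n^m\to\infty$ super-polynomially as $m\to\infty$, but it should be stated. The paper handles this case differently, via the reduction to $\int_0^1 \alpha^H e^{L/\alpha}\,d\alpha=\infty$ established in~(\ref{weibullpos}). Second, in the degenerate case $\sigma^2=0$ (all $x_i$ equal) there is no ``next non-trivial order'': one has $A^\alpha/B(\alpha)^n\equiv n^{-n}$ identically, so the exponential cut-off vanishes and divergence is immediate. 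The paper sidesteps this by stipulating in Proposition~\ref{lim2b} that the observations are not all equal.
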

\begin{proof}
See Appendix \ref{ctheoremab3} 
\end{proof}

\begin{theorem}\label{fundteo3} Suppose that $0 < \pi(\alpha,\beta,\mu)<\infty$ for all $(\alpha,\beta,\mu)\in \R_+^3$, and suppose that $\pi(\mu,\alpha,\phi)=\pi(\mu)\pi(\alpha)\pi(\phi)$ where the priors have asymptotic power-law behaviors with
\begin{equation*}
\pi(\mu) \propto \mu^{k}, \ \ \ \pi(\alpha) \underset{\alpha\to 0^+}{\propto} \alpha^{q_0},  \ \ \  \pi(\alpha) \underset{\alpha\to \infty}{\propto} \alpha^{q_\infty},
\end{equation*}
\begin{equation*}
\pi(\phi) \underset{\phi\to 0^+}{\propto} \phi^{r_0}\quad\mbox{ and }\quad \pi(\phi) \underset{\phi\to \infty}{\propto} \phi^{r_\infty},
\end{equation*}
then the posterior is proper if and only if $k= -1$, $q_\infty < r_0 $, $ 2r_\infty+1 < q_0$, $n>-q_0$ and $n>-r_0$. Moreover, if the posterior is proper then $\alpha^q\phi^r\mu^j\pi(\alpha,\phi,\mu)$ leads to a proper posterior if and only if $j=0$, and $2(r+r_\infty)+1-q_0<q<r+r_0-q_\infty$.
\end{theorem}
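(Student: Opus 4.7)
The plan is to split the statement into two parts: the ``if and only if'' characterization of propriety of the posterior associated with $\pi(\mu,\alpha,\phi) = \pi(\mu)\pi(\alpha)\pi(\phi)$, and the ``moreover'' clause on the modified prior $\alpha^q \phi^r \mu^j \pi(\mu,\alpha,\phi)$. For sufficiency in the first part, I would invoke Theorem \ref{fundteo1} directly: the $\propto$ assumptions imply the one-sided $\lesssim$ hypotheses of that theorem, so whenever $k=-1$, $q_\infty<r_0$, $2r_\infty+1<q_0$, $n>-q_0$ and $n>-r_0$ hold, the posterior is proper.

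For necessity, I would argue by contrapositive, ruling out each possible failure using Theorem \ref{fundteo2}. If $k\neq -1$, the assumption $\pi\gtrsim \pi(\mu)\pi(\alpha)\pi(\phi)$ combined with the $\propto$ behaviors puts us either in the first subcase of item i) (when $k<-1$) or the second subcase of item i) (when $k>-1$, using $\pi(\alpha)\underset{\alpha\to 0^+}{\gtrsim}\alpha^{q_0}$), forcing $k=-1$. With $k=-1$ established, if $q_\infty\geq r_0$ or $n\leq -r_0$, the first subcase of item ii) applies; if $2r_\infty+1\geq q_0$ or $n\leq -q_0$, the second subcase of item ii) applies. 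This exhausts the cases and yields the equivalence.

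For the moreover clause, I would reapply the equivalence to the modified prior $\tilde\pi(\mu,\alpha,\phi) = \alpha^q \phi^r \mu^j \pi(\mu,\alpha,\phi)$. Multiplication by a monomial shifts all power-law exponents additively, so $\tilde\pi(\mu)\propto \mu^{k+j}$, $\tilde\pi(\alpha)\underset{\alpha\to 0^+}{\propto}\alpha^{q_0+q}$, $\tilde\pi(\alpha)\underset{\alpha\to\infty}{\propto}\alpha^{q_\infty+q}$, and analogously for $\tilde\pi(\phi)$ with exponents $r_0+r$ and $r_\infty+r$. Substituting into the characterization, the condition $k+j=-1$ collapses to $j=0$ (since $k=-1$), while $q_\infty+q<r_0+r$ and $2(r_\infty+r)+1<q_0+q$ rearrange into the stated double inequality $2(r+r_\infty)+1-q_0<q<r+r_0-q_\infty$.

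The main obstacle is bookkeeping rather than substantive analysis: one has to track the additional sample-size conditions $n>-(q_0+q)$ and $n>-(r_0+r)$ that appear when the characterization is applied to $\tilde\pi$, and verify that they are either automatically implied by the standing assumptions $n>-q_0$, $n>-r_0$ together with the algebraic bounds on $q$ and $r$, or can be treated as implicit side hypotheses. Once this is handled, the argument is a purely mechanical transfer through Theorems \ref{fundteo1} and \ref{fundteo2}, with no new analytic input beyond the exponent shift.
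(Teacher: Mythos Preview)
Your proposal is correct and follows essentially the same approach as the paper: reduce the first equivalence to the complementarity of Theorems~\ref{fundteo1} and~\ref{fundteo2}, then handle the ``moreover'' clause by shifting exponents and reapplying the characterization. The paper disposes of the extra sample-size conditions $n>-(q_0+q)$ and $n>-(r_0+r)$ in one line by asserting they are already implied by $n>-q_0$ and $n>-r_0$, which is exactly the bookkeeping point you flagged.
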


\begin{proof} Notice that under our hypothesis, Theorems \ref{fundteo2} and \ref{fundteo3} are complementary, and thus the first part of the theorem is proved. Analogously, by the Theorems \ref{fundteo2} and \ref{fundteo3} the prior $\alpha^q\beta^r\mu^l\pi(\alpha,\beta,\mu)$ leads to a proper posterior if and only if $j= 0$, $q+q_\infty<r+r_0$, $2(r+r_\infty)+1 < q+q_0$, $n> -q_0-q$ and $n> -r_0-r$. The last two proportionalities are already satisfied since $n>-q_0$ and $n>-r_0$. Combining the other inequalities the proof is completed.
\end{proof}

\section{Some common objective priors with power-law asymptotic behavior}

A common approach was suggested by Jeffreys' that considered different procedures for constructing objective priors. For $\theta\in(0,\infty)$ (see, \cite{kass1996selection}), Jeffreys suggested to use the prior $\pi(\theta)=\theta^{-1}$, i.e., a power-law distribution with exponent 1. The main justification for this choice is its invariance under power transformations of the parameters. As the parameters of the Stacy family of distributions are contained in the interval $(0,\infty)$, the prior using Jeffreys' first rule is $\pi_1\left(\phi,\mu,\alpha\right)\propto (\phi\mu\alpha)^{-1}$.

Let us consider the case when $\alpha$ is known. Hence, the results is valid for the Gamma, Nakagami, Wilson-Hilferty distributions, among others. The  Jeffreys' first rule when $\alpha$ is known follows power-law distributions with $\pi(\phi)\propto\phi^{-1}$ and $\pi(\mu)\propto\mu^{-1}$. Hence the posterior distribution obtained is proper for all $n>1$ as well as its higher moments. This can be easily proved by noticing that as $\pi_1(\phi,\mu)\propto\phi^{-1}\mu^{-1}$ we can apply Theorem \ref{maintheorem2wei} with $k=r_0=r_\infty=-1$ and it follows that the posterior is proper for $n>-r_0=1$ as well as its moments.

On the other hand, under the general model where all the parameters are unknown, we have the  posterior distribution (\ref{posteriord1}) obtained using Jeffreys' first rule is improper for all $n\in\N^+$.  Since $\pi(\phi)\propto\phi^{-1}$, $\pi(\alpha)\propto\alpha^{-1}$ and $\pi(\mu)\propto\mu^{-1}$, i.e., power-laws with exponent $1$, we can apply Theorem \ref{fundteo2} ii) with $k=q_\infty=r_0=-1$, where $q_\infty\geq r_0$, and therefore we have that $\pi_2(\alpha,\beta,\mu)\propto\phi^{-1}\alpha^{-1}\mu^{-1}$ leads to an improper posterior for all $n\in\N^+$. 

Let us consider the cases  where  $\pi(\mu)\propto\mu^{-1}$ and the $\pi(\phi)$ has different forms which can be written as
\begin{equation}\label{priorjgenje}
\pi_j\left(\boldsymbol{\theta}\right)\propto \frac{\pi_j(\phi)}{\mu},
\end{equation}
where $j$ is the index related to a particular prior. Therefore, our main focus will be to study the behavior of the priors $\pi_j(\phi)$.

One important objective prior is based on Jeffreys' general rule \cite{jeffreys1946invariant} and known as Jeffreys' prior. This prior is obtained through the square root of the determinant of the Fisher information matrix and has been widely used due to its invariance property under one-to-one transformations. The Fisher information matrix for the Stacy family of distributions was derived by \cite{hager1970inferential} and its elements are given by
\begin{equation*}
I_{\alpha,\alpha}(\boldsymbol{\theta})= \dfrac{1+2\psi(\phi)+\phi\psi ' (\phi)+\phi\psi(\phi)^2}{\alpha^2}, \ I_{\alpha,\mu}(\boldsymbol{\theta})= -\dfrac{\psi(\phi)}{\alpha}, \ I_{\mu,\phi}(\boldsymbol{\theta})=\dfrac{\alpha}{\mu}, 
\end{equation*}
\begin{equation*}
 I_{\alpha,\phi}(\boldsymbol{\theta})=  -\dfrac{1+\phi\psi(\phi)}{\mu}, \ \ I_{\mu,\mu}(\boldsymbol{\theta})=\dfrac{\phi\alpha^2}{\mu^2} \ \ \mbox{ and } \ \ I_{\phi,\phi}(\boldsymbol{\theta})= \psi ' (\phi) ,
\end{equation*}
where  $\psi'(k)=\frac{\partial}{\partial k}\psi(k)$ is the trigamma function.

Van Noortwijk \cite{van2001bayes} provided the Jeffreys' prior for the general model, which can be expressed by (\ref{priorjgenje}) with
\begin{equation}\label{priorjgg}
\pi_3\left(\phi\right)\propto \sqrt{\phi^2\psi^{'}(\phi)^2-\psi^{'}(\phi)-1}.
\end{equation}

\begin{corollary}\label{corrolpriorje} The prior $\pi_3\left(\phi\right)$ has the asymptotic behavior given by 
\begin{equation*}
\pi_3\left(\phi\right)\underset{\phi\to 0^{+}}{\propto} \phi^{0} \quad \mbox{ and } \quad \pi_3\left(\phi\right)\underset{\phi\to \infty}{\propto} \phi^{-1}, 
\end{equation*}
then the obtained posterior distribution  is improper for all $n\in\N^+$.
\end{corollary}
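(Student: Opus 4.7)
The plan is in two stages: first I would verify the two claimed asymptotic exponents of $\pi_3(\phi)$, and then I would feed these exponents, together with the factorisation induced by (\ref{priorjgenje}), into Theorem \ref{fundteo2} (ii) to obtain improperness for every $n$. Note that (\ref{priorjgenje}) reads $\pi_3(\phi,\mu,\alpha)\propto \pi_3(\phi)\mu^{-1}$ with no $\alpha$-factor, so in the language of the general theorems we have $k=-1$, $\pi(\alpha)\propto\alpha^0$ (whence $q_0=q_\infty=0$), and we are left with the task of finding the exponents $r_0,r_\infty$ of $\pi_3(\phi)$.

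For the asymptotic behaviour at $\phi\to 0^+$, I would isolate the pole of $\psi'$ from its series representation $\psi'(\phi)=\sum_{k\geq 0}(\phi+k)^{-2}$, writing $\psi'(\phi)=\phi^{-2}+\pi^2/6+O(\phi)$. Squaring and multiplying by $\phi^2$ gives $\phi^2\psi'(\phi)^2=\phi^{-2}+\pi^2/3+O(\phi)$, and the subtraction $\phi^2\psi'(\phi)^2-\psi'(\phi)-1$ cancels the singular $\phi^{-2}$ terms and leaves the strictly positive limit $\pi^2/6-1$. Taking the square root yields $\pi_3(\phi)\underset{\phi\to 0^+}{\propto}\phi^{0}$, so $r_0=0$. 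The delicate step is the behaviour at $\phi\to\infty$: here the two leading orders of $\phi^2\psi'(\phi)^2$ coincide with those of $\psi'(\phi)+1$, so one must push the classical expansion $\psi'(\phi)=\phi^{-1}+(2\phi^2)^{-1}+(6\phi^3)^{-1}+O(\phi^{-5})$ far enough to see the surviving term. Squaring carefully produces $\phi^2\psi'(\phi)^2=1+\phi^{-1}+7/(12\phi^2)+O(\phi^{-3})$, and the subtraction collapses to $\phi^2\psi'(\phi)^2-\psi'(\phi)-1 = 1/(12\phi^2)+O(\phi^{-3})$. The square root then behaves like $(2\sqrt{3}\,\phi)^{-1}$, giving $\pi_3(\phi)\underset{\phi\to\infty}{\propto}\phi^{-1}$ and thus $r_\infty=-1$.

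With these exponents established, the conclusion is mechanical. The hypothesis $\pi(\mu,\alpha,\phi)\gtrsim\pi(\mu)\pi(\alpha)\pi(\phi)$ of Theorem \ref{fundteo2} (ii) is satisfied as an equality, $\pi(\mu)\gtrsim\mu^{-1}$ holds, and the first bullet point asks for $\pi(\phi)\underset{\phi\to 0^+}{\gtrsim}\phi^{r_0}$ together with $\pi(\alpha)\underset{\alpha\to\infty}{\gtrsim}\alpha^{q_\infty}$ subject to either $q_\infty\geq r_0$ or $n\leq -r_0$. In our situation $q_\infty=0=r_0$, so the alternative $q_\infty\geq r_0$ holds and is \emph{independent} of the sample size. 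Theorem \ref{fundteo2} (ii) therefore certifies that the posterior associated with $\pi_3$ is improper for every $n\in\N^+$, which is the statement of the corollary. The main obstacle in the entire argument is purely computational: carrying the asymptotic expansion of $\psi'$ at infinity to high enough order to witness the cancellation and extract the $\phi^{-2}$ coefficient under the radical.
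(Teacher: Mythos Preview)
Your argument is correct and follows the same route as the paper: identify the power-law exponents of $\pi_3(\phi)$ at $0^+$ and $\infty$, read off $k=-1$, $q_\infty=0$, $r_0=0$ from the factorisation (\ref{priorjgenje}), and invoke the first alternative of Theorem~\ref{fundteo2}~(ii) via $q_\infty\geq r_0$. The only difference is that the paper outsources the asymptotics of $\sqrt{\phi^2\psi'(\phi)^2-\psi'(\phi)-1}$ to an external reference, whereas you compute them directly from the series of $\psi'$; your version is therefore more self-contained, and your care in tracking the cancellation at $\phi\to\infty$ down to the $\phi^{-2}$ term (which the paper does not display) is exactly what is needed to justify $r_\infty=-1$.
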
 
\begin{proof} Ramos et al. \cite{ramos2017bayesian} proved that
\begin{equation}\label{equatinpropi}
\sqrt{\phi^2\psi^{'}(\phi)^2-\psi^{'}(\phi)-1}\underset{\phi\to 0^{+}}{\propto}  1\ \mbox{ and }\ \sqrt{\phi^2\psi^{'}(\phi)^2-\psi^{'}(\phi)-1}\underset{\phi\to \infty}{\propto} \frac{1}{\phi}.
\end{equation}
Since $\pi_3\left(\phi\right)\underset{\phi\to 0^{+}}{\propto} 1$, the hypotheses of Theorem \ref{fundteo2}, ii) hold with $k=-1$ and $r_0=q_\infty=0$, where $q_\infty \geq r_0$, and therefore $\pi_3(\boldsymbol{\theta})$ leads to an improper posterior for all $n\in \N^+$.
\end{proof}

 Let $\alpha$ be known, then the Jeffreys' prior has the form  (\ref{priorjgenje}) where $\pi(\phi)$ is given by
\begin{equation}\label{eqprio1phip}
\pi_4(\phi)\propto\sqrt{\phi\psi'(\phi)-1}.
\end{equation}

\begin{corollary} 
 The prior $\pi_4\left(\phi\right)$ has the asymptotic power-law behavior given by 
\begin{equation*}
\pi_4\left(\phi\right)\underset{\phi\to 0^{+}}{\propto} \phi^{-\frac{1}{2}} \quad \mbox{ and } \quad \pi_4\left(\phi\right)\underset{\phi\to \infty}{\propto} \phi^{-\frac{1}{2}}, 
\end{equation*} then the obtained posterior is proper for $n\geq 1$ as well as its higher moments.
\end{corollary}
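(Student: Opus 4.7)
The plan has two steps: first establish the two claimed power-law asymptotics of $\pi_4(\phi)=\sqrt{\phi\psi'(\phi)-1}$, and then invoke Theorem \ref{maintheorem2a} with the appropriate exponents.

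For the behavior near $0$, I would use the recurrence $\psi'(\phi)=\phi^{-2}+\psi'(\phi+1)$. Since $\psi'(\phi+1)$ is continuous at $\phi=0$ with finite value $\psi'(1)=\pi^2/6$, it follows that $\phi\psi'(\phi)=\phi^{-1}+\phi\psi'(\phi+1)$, so $\phi\psi'(\phi)-1\underset{\phi\to 0^+}{\propto}\phi^{-1}$. Taking square roots yields $\pi_4(\phi)\underset{\phi\to 0^+}{\propto}\phi^{-1/2}$. For the behavior at infinity, I would use the standard asymptotic expansion
\begin{equation*}
\psi'(\phi)=\frac{1}{\phi}+\frac{1}{2\phi^2}+O\!\left(\frac{1}{\phi^3}\right)\quad\text{as }\phi\to\infty,
\end{equation*}
which gives $\phi\psi'(\phi)-1=\frac{1}{2\phi}+O(\phi^{-2})$, and hence $\pi_4(\phi)\underset{\phi\to\infty}{\propto}\phi^{-1/2}$. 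These expansions are standard facts about the trigamma function; if a clean reference is wanted, they also follow from the Laurent/asymptotic series that Ramos et al.\ \cite{ramos2017bayesian} used in establishing (\ref{equatinpropi}).

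With the two asymptotics in hand, the prior fits the hypotheses of Theorem \ref{maintheorem2a} (case $\alpha$ known) with $\pi(\mu)\propto\mu^{-1}$, so $k=-1$, and with $r_0=r_\infty=-\tfrac{1}{2}$. The condition for propriety in that theorem, when $k=-1$, is $n>-r_0=\tfrac{1}{2}$, which holds for every integer $n\geq 1$. The same theorem gives finiteness of all posterior moments of $\phi$ and $\mu$.

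The only nontrivial step is extracting the sharp leading-order behavior of $\phi\psi'(\phi)-1$ on both ends; the rest is a direct application of Theorem \ref{maintheorem2a}. I expect the $\phi\to\infty$ side to be the slightly delicate one, because the cancellation of the leading $1$ in $\phi\psi'(\phi)-1$ requires keeping the $1/(2\phi^2)$ term in the expansion of $\psi'(\phi)$; once that cancellation is carried out carefully, the rest is routine.
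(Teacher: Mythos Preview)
Your proposal is correct and follows essentially the same approach as the paper: you derive the two asymptotics of $\phi\psi'(\phi)-1$ (using the recurrence at $0^+$ where the paper instead uses the equivalent limit $\psi'(z)/z^{-2}\to 1$, and the same asymptotic expansion at $\infty$) and then apply Theorem~\ref{maintheorem2a} with $k=-1$, $r_0=r_\infty=-\tfrac12$. Your remark about needing the $1/(2\phi^2)$ term to survive the cancellation at infinity matches exactly the care taken in the paper's proof.
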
 
\begin{proof} Here, we have $\pi(\beta)=\beta^{-1}$, i.e, power-law distribution.
Following \cite{abramowitz} we have that $\lim_{z\to 0^+} \dfrac{\psi'(z)}{z^{-2}}=1$, then $
\lim_{\phi\to 0^+}  \dfrac{\phi\psi'(\phi) - 1}{\phi^{-1}} = \lim_{\phi\to 0^+} \dfrac{\psi'(\phi)}{\phi^{-2}} - \phi = 1$, and thus
\begin{equation}\label{neweqdm134}
\phi\psi'(\phi) - 1 \underset{\phi\to 0^+}{\propto} \phi^{-1},
\end{equation}
which implies $\sqrt{\phi\psi'(\phi) - 1} \underset{\phi\to 0^+}{\propto} \phi^{-\frac{1}{2}}$.
 Moreover, from \cite{abramowitz},  we have that $\psi'(z) = \dfrac{1}{z} + \dfrac{1}{2z^2} + o\left(\dfrac{1}{z^3}\right)$ and thus
\begin{align*}
\frac{\phi\psi'(\phi) - 1}{\phi^{-1}}  = \frac{1}{2} + o\left(\frac{1}{\phi}\right)
\Rightarrow \lim_{\phi\to\infty} \frac{\sqrt{\phi\psi'(\phi) - 1}}{\phi^{-\frac{1}{2}}} = \frac{1}{\sqrt{2}},
\end{align*}
which implies $\sqrt{\phi\psi'(\phi) - 1} \underset{\phi\to \infty}{\propto} \phi^{-\frac{1}{2}}$.

Therefore we can apply Theorem \ref{maintheorem2a} with $k=-1$ and $r_0=r_\infty=-\frac{1}{2}$ and therefore the posterior is proper and the posterior moments are finite for all $n>-r_0=\frac{1}{2}$.
\end{proof}

Fonseca et al. \cite{fonseca2008objective} considered the scenario where the Jeffreys' prior has an independent structure, i.e., the prior has the form $\pi_{J2}\left(\boldsymbol{\theta}\right)\propto\sqrt{|\f{diag}I(\boldsymbol{\theta})|}$, where diag$\,I(\cdot)$ is the diagonal matrix of $I(\cdot)$. For the general distribution the prior is given by (\ref{priorjgenje}) with 
\begin{equation}\label{priorjgg2a}
\pi_4\left(\phi\right)\propto \sqrt{\phi\psi^{'}(\phi)\left(1+2\psi(\phi)+\phi\psi^{'}(\phi)+\phi\psi(\phi)^2\right)}.
\end{equation}

Notice that for (\ref{priorjgg2a}) is only necessary to know the behavior $\pi_4\left(\phi\right)$ when $\phi\to 0^+$ that provided enough information to very that the posterior is improper.

\begin{corollary} The prior (\ref{priorjgg2a}) has the asymptotic power-law behavior given by $\pi_4\left(\phi\right)\underset{\phi\to 0^+}{\propto} \phi^{-\frac{1}{2}}$ and the obtained posterior is improper for all $n\in\N^+$.
\end{corollary}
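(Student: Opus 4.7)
My plan is to establish $\pi_4(\phi) \underset{\phi \to 0^+}{\propto} \phi^{-1/2}$ by means of Laurent expansions of the digamma and trigamma functions at the origin, and then apply Theorem~\ref{fundteo2}~ii) to deduce impropriety of the posterior.

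For the asymptotics, I would use the standard expansions near $0$ (see, e.g., \cite{abramowitz})
\begin{equation*}
\psi(\phi) = -\frac{1}{\phi} - \gamma + O(\phi), \qquad \psi'(\phi) = \frac{1}{\phi^2} + O(1),
\end{equation*}
which give $\phi\psi'(\phi) = 1/\phi + O(\phi)$, $\phi\psi(\phi)^2 = 1/\phi + 2\gamma + O(\phi)$, and $2\psi(\phi) = -2/\phi - 2\gamma + O(\phi)$. Adding these inside the bracket
\begin{equation*}
B(\phi) := 1 + 2\psi(\phi) + \phi\psi'(\phi) + \phi\psi(\phi)^2,
\end{equation*}
one sees that the $1/\phi$ singularities cancel ($-2+1+1=0$) and so do the $\pm 2\gamma$ constants, leaving $B(\phi)\to 1$ as $\phi\to 0^+$. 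Consequently $\phi\psi'(\phi)\,B(\phi) \underset{\phi\to 0^+}{\propto} \phi^{-1}$, and taking a square root yields the claimed $\pi_4(\phi) \underset{\phi\to 0^+}{\propto} \phi^{-1/2}$.

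For the impropriety, I would identify the full prior structure. The Fonseca--Jeffreys prior $\pi_{J2}(\boldsymbol{\theta}) \propto \sqrt{I_{\alpha\alpha}I_{\mu\mu}I_{\phi\phi}}$ simplifies, using the Fisher information entries given earlier, to the separable form $\pi(\mu)\pi(\alpha)\pi(\phi)$ with $\pi(\mu)\propto\mu^{-1}$ (so $k=-1$), $\pi(\alpha)\propto 1$ (so $q_\infty=0$), and $\pi(\phi)=\pi_4(\phi)$ (so $r_0=-1/2$); the $\alpha$-dependence in $I_{\alpha\alpha}$ and $I_{\mu\mu}$ cancels under the square root. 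Since $\pi(\phi) \gtrsim \phi^{-1/2}$ as $\phi\to 0^+$, $\pi(\alpha) \gtrsim \alpha^{0}$ as $\alpha\to\infty$, and $q_\infty = 0 \geq -1/2 = r_0$, the first bullet of Theorem~\ref{fundteo2}~ii) applies directly and $p(\boldsymbol{\theta}|\boldsymbol{x})$ is improper for every $n\in\N^+$.

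The only delicate point will be verifying the cancellation $B(\phi)\to 1$: each of the three non-constant contributions in the bracket blows up like $1/\phi$, and if any one of them failed to cancel the exponent would jump from $-1/2$ to $-1$. Carrying one additional order in the expansions of $\psi$ and $\psi'$ is enough to confirm both that the $1/\phi$ pieces sum to zero and that the residual limit is the strictly positive constant $1$, which is what makes the $\phi^{-1/2}$ behavior genuine and makes the square root well-defined near the origin.
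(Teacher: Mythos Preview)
Your proof is correct and follows the paper's approach exactly: establish that the bracket $B(\phi)\to 1$ as $\phi\to 0^+$, deduce $\pi_4(\phi)\underset{\phi\to 0^+}{\propto}\phi^{-1/2}$, and then apply Theorem~\ref{fundteo2}~ii) with $k=-1$, $r_0=-\tfrac{1}{2}$, $q_\infty=0$ (so $q_\infty\geq r_0$). The only difference is cosmetic --- the paper obtains the exact identity $B(\phi)=1+\phi\bigl(\psi(\phi+1)^2+\psi'(\phi+1)\bigr)$ via the recurrences $\psi(\phi)=-1/\phi+\psi(\phi+1)$ and $\psi'(\phi)=1/\phi^2+\psi'(\phi+1)$ rather than your Laurent expansions, but the cancellation and conclusion are identical.
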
 
\begin{proof} By Abramowitz  and Stegun\cite{abramowitz}, we have the recurrence relations
\begin{equation}\label{digammato0}
\psi(\phi)=-\frac{1}{\phi} +\psi(\phi+1) \ \ \mbox{ and } \ \ \psi'(\phi)=\frac{1}{\phi^2} +\psi'(\phi+1).
\end{equation}

It follows that 
\begin{equation*}
\begin{aligned}
& 2\psi(\phi)+\phi\psi'(\phi)+\phi\psi(\phi)^2+1 = \\
& 2\left(-\frac{1}{\phi}+\psi(\phi+1) \right)+\phi\left(\frac{1}{\phi^2}+\psi'(\phi+1) \right)+\phi\left(\frac{1}{\phi^2}-\frac{2}{\phi}\psi(\phi+1)+\psi(\phi+1)^2 \right)+1 = \\
& 1+\phi\left(\psi(\phi+1)^2+\psi'(\phi+1) \right).
\end{aligned}
\end{equation*}
Hence, $ 2\psi(\phi)+\phi\psi'(\phi)+\phi\psi(\phi)^2+1 \underset{\phi\to 0^+}{\propto} 1$, which implies that
\begin{equation}\label{complicatedto0}
\begin{aligned}
\pi_4\left(\phi\right)\propto\sqrt{\phi\psi^{'}(\phi)\left(1+2\psi(\phi)+\phi\psi^{'}(\phi)+\phi\psi(\phi)^2\right)} \underset{\phi\to 0^+}{\propto} \phi^{-\frac{1}{2}},
\end{aligned}
\end{equation}
i.e., power-law distribution with exponent $\frac{1}{2}$, then, Theorem \ref{fundteo2} ii) can be applied with $k=-1$, $r_0 = -\frac{1}{2}$ and $q_\infty = 0$ where $q_\infty \geq r_0$ and therefore $\pi_4(\boldsymbol{\theta})$ leads to an improper posterior.
\end{proof}



This approach can be further extended considering that only one parameter is independent. For instance, let $(\theta_1,\theta_2)$ be dependent parameters and $\theta_3$ be independent then under the partition the $((\theta_1,\theta_2),\theta_3)$-Jeffreys' prior is given by
\begin{equation}\label{priorjgg2}
\pi\left(\boldsymbol{\theta}\right)\propto \sqrt{\left(I_{11}(\boldsymbol{\theta})I_{22}(\boldsymbol{\theta})-I_{12}^2(\boldsymbol{\theta})\right)I_{33}(\boldsymbol{\theta})}.
\end{equation}
For the general model the partition $((\phi,\mu),\alpha)$-Jeffreys' prior is of the form (\ref{priorjgenje}) with 
\begin{equation}\label{priorjgg3}
\pi_5\left(\phi\right)\propto \sqrt{(\phi\psi^{'}(\phi)-1)\left(1+2\psi(\phi)+\phi\psi^{'}(\phi)+\phi\psi(\phi)^2\right)}.
\end{equation}

\begin{corollary}  The prior (\ref{priorjgg3}) has the asymptotic power-law behavior given by $\pi_5\left(\phi\right)\underset{\phi\to 0^+}{\propto} \phi^{-\frac{1}{2}}$ and the obtained posterior is improper for all $n\in\N^+$.
\end{corollary}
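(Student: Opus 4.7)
The plan is to reduce this corollary to asymptotic estimates already established for the two factors inside the radical and then to invoke Theorem \ref{fundteo2} ii), exactly in parallel to the preceding two corollaries. No new analytic input is needed; the proof is essentially a transcription of the $\pi_4$ argument.

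First I would treat the two factors in the radicand of (\ref{priorjgg3}) separately. The first factor is exactly the one analyzed in (\ref{neweqdm134}), giving $\phi\psi'(\phi)-1 \underset{\phi\to 0^+}{\propto} \phi^{-1}$. The second factor was already handled in the proof of the preceding corollary by using the recurrences (\ref{digammato0}) to cancel the singular terms, and it satisfies $1+2\psi(\phi)+\phi\psi'(\phi)+\phi\psi(\phi)^2 \underset{\phi\to 0^+}{\propto} 1$. Combining these two estimates via Proposition \ref{properties} (the product rule applied in both directions to promote $\lesssim$ to $\propto$, then the $r=1/2$ power rule) would yield
\begin{equation*}
\pi_5(\phi) \underset{\phi\to 0^+}{\propto} \sqrt{\phi^{-1}\cdot 1} = \phi^{-1/2},
\end{equation*}
which is the first claim.

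For the improperness conclusion, I would read off $\pi(\mu)\propto\mu^{-1}$ from (\ref{priorjgenje}), so $k=-1$, and note that (\ref{priorjgg3}) carries no $\alpha$-dependence, so $\pi(\alpha)\propto\alpha^{0}$ and in particular $q_\infty=0$. Theorem \ref{fundteo2} ii), first bullet of the second item, then applies with $r_0=-\tfrac{1}{2}$ and $q_\infty=0$, since the condition $q_\infty\geq r_0$ is satisfied. Hence $p(\boldsymbol{\theta}\mid\boldsymbol{x})$ is improper for every $n\in\N^+$.

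I do not anticipate any real obstacle here; the only point worth care is that Proposition \ref{properties} is stated for $\lesssim$, and I am using the two-sided $\propto$ version — but this follows by applying the proposition in both directions, and both reused estimates are already $\propto$ relations, so the extension is automatic. The corollary is in this sense a direct cousin of the one for $\pi_4$: the first factor $\phi\psi'(\phi)$ is replaced by $\phi\psi'(\phi)-1$, but its $\phi\to 0^+$ order is unchanged, so the same exponent $-\tfrac{1}{2}$ propagates through the square root and the same branch of Theorem \ref{fundteo2} applies.
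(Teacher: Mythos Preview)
Your proposal is correct and follows essentially the same route as the paper: both use (\ref{neweqdm134}) for the first factor and the identity established in the $\pi_4$ corollary for the second factor to obtain $\pi_5(\phi)\underset{\phi\to 0^+}{\propto}\phi^{-1/2}$, and then apply Theorem \ref{fundteo2} ii) with $k=-1$, $r_0=-\tfrac{1}{2}$, $q_\infty=0$. Your explicit invocation of Proposition \ref{properties} and your remark that the prior's $\alpha$-independence gives $q_\infty=0$ make the argument slightly more self-contained than the paper's version, but the substance is identical.
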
 
\begin{proof} From equation (\ref{neweqdm134}) we have that $\phi\psi^{'}(\phi) -1 \underset{\phi\to 0^+}{\propto} \frac{1}{\phi}$ which combined with the relation (\ref{complicatedto0}) implies that
\begin{equation}
\pi_5\left(\phi\right)\propto \sqrt{(\phi\psi^{'}(\phi)-1)\left(1+2\psi(\phi)+\phi\psi^{'}(\phi)+\phi\psi(\phi)^2\right)} \underset{\phi\to 0^+}{\propto} \phi^{-\frac{1}{2}}.
\end{equation}
i.e., power-law distribution with exponent $\frac{1}{2}$,  then Theorem \ref{fundteo2}, ii) can be applied with $k=-1$, $r_0 = -\frac{1}{2}$ and $q_\infty = 0$ where $q_\infty \geq r_0$ and therefore $\pi_5(\boldsymbol{\theta})$ leads to an improper posterior.
\end{proof}

Considering the partition $((\alpha,\mu),\phi)$-Jeffreys' prior is given by (\ref{priorjgenje}) where 
\begin{equation}\label{priorjgg5}
\pi_6\left(\phi\right)\propto \sqrt{\psi^{'}(\phi)(\phi^2\psi^{'}(\phi)+\phi-1)}\,.
\end{equation}

Similar to the two cases above. From the recurrence relations (\ref{digammato0}), we have that 
\begin{equation}\label{eqaux2}
\phi^2\psi^{'}(\phi)+\phi-1 = \phi\left(1 + \phi \psi^{'}(\phi+1)\right) \Rightarrow \phi^2\psi^{'}(\phi)+\phi-1 \underset{\phi\to 0^+}{\propto} \phi
\end{equation}
as $\psi'(\phi)\propto \frac{1}{\phi^2}$ it follows that
\begin{equation*}
\pi_6\left(\phi\right)\propto \sqrt{\psi^{'}(\phi)(\phi^2\psi^{'}(\phi)+\phi-1)} \underset{\phi\to 0^+}{\propto} \phi^{-\frac{1}{2}},
\end{equation*}
with the same values $k=-1$, $r_0 = -\frac{1}{2}$ and $q_\infty = 0$ where $q_\infty \geq r_0$, the prior $\pi_6(\boldsymbol{\theta})$ leads to an improper posterior.

Another important class of objective priors was introduced by Bernardo \cite{bernardo1979a} with further developments \cite{berger1989estimating, berger1992ordered,berger1992development} reference priors play an important role in objective Bayesian analysis. The reference priors have desirable properties, such as invariance, consistent marginalization, and consistent sampling properties. \cite{bernardo2005} reviewed different procedures to derive reference priors considering ordered parameters of interest. The following proposition will be applied to obtain reference priors for the Generalized Gamma distribution.

\begin{proposition}\label{propositionop} [ Bernardo \cite{bernardo1979a}, pg 40, Theorem 14] Let $\boldsymbol\theta=(\theta_1,\ldots,\theta_m)$ be a vector with the ordered parameters of interest and $p(\boldsymbol\theta|\boldsymbol{x})$ be the posterior distribution that has an asymptotically normal distribution with dispersion matrix $V(\hat{\boldsymbol\theta}_n)/n$, where $\hat{\boldsymbol\theta}_n$ is a consistent estimator of $\boldsymbol\theta$ and $H(\boldsymbol\theta)=V^{-1}(\boldsymbol\theta)$. In addition, $V_j$ is the upper $j\times j$ submatrix of $V$, $H_j=V_j$ and $h_{j,j}(\boldsymbol\theta)$ is the lower right element of $H_j$. If the parameter space of $\theta_{j}$ is independent of $\boldsymbol\theta_{-j}=(\theta_1,\ldots,\theta_{j-1},\theta_{j+1},\ldots,\theta_{m}),$ for $j=1,\ldots,m,$  and $h_{j,j}(\boldsymbol\theta)$ are factorized in the form 
$h_{j,j}^{\frac{1}{2}}(\boldsymbol\theta)=f_j(\theta_j)g_j(\boldsymbol\theta_{-j}), \quad j=1,\ldots,m$, then the reference prior for the ordered parameters $\boldsymbol\theta$ is given by 
\begin{equation*}
\pi(\boldsymbol\theta)=\pi(\theta_j|\theta_1,\ldots,\theta_{j-1})\times\cdots\times\pi(\theta_2|\theta_1)\pi(\theta_1),
\end{equation*}
where $\pi(\theta_j|\theta_1,\ldots,\theta_{j-1})=f_j(\theta_j),$ for $j=1,\ldots,m$,  and there is no need for compact approximations, even if the conditional priors are not proper. 
\end{proposition}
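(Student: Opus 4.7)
The plan is to establish the formula by the sequential (block-by-block) reference prior algorithm, using the asymptotic normality hypothesis to reduce the construction at each stage to a conditional Jeffreys-type rule, and then using the factorisation to eliminate the usual compactification step.

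First, I would recall the scalar case. For a one-dimensional parameter whose posterior is asymptotically normal with precision proportional to $nh(\theta)$, Bernardo's construction (maximising the expected Kullback--Leibler divergence between posterior and prior as $n\to\infty$) returns the Jeffreys prior $\pi(\theta)\propto h(\theta)^{1/2}$. I would state this as the base building block applied at each conditional layer.

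Next, I would run the ordered algorithm from the innermost parameter outward. Condition on $(\theta_1,\dots,\theta_{m-1})$; by a Schur-complement identity applied to the asymptotic dispersion matrix $V/n$, the conditional asymptotic precision for $\theta_m$ is exactly $nh_{m,m}(\boldsymbol\theta)$, where $h_{m,m}$ is the lower-right entry of $H_m=V_m^{-1}$. The scalar case then gives a conditional reference prior proportional to $h_{m,m}^{1/2}(\boldsymbol\theta)$. The hypothesis $h_{m,m}^{1/2}(\boldsymbol\theta)=f_m(\theta_m)g_m(\boldsymbol\theta_{-m})$ means that, viewed as a function of $\theta_m$ alone, this conditional prior is proportional to $f_m(\theta_m)$, and the $g_m$ piece is absorbed as a constant in the conditioning variables. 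I would now replace the joint model by the integrated (marginal) model for $(\theta_1,\dots,\theta_{m-1})$ and repeat: at step $j$ the conditional asymptotic precision for $\theta_j$ given $(\theta_1,\dots,\theta_{j-1})$ is $h_{j,j}(\boldsymbol\theta)$, which by hypothesis factorises so that $\pi(\theta_j\mid\theta_1,\dots,\theta_{j-1})\propto f_j(\theta_j)$. Multiplying these conditional pieces yields the claimed chain rule.

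The main obstacle, and where the factorisation hypothesis is essential, is the usual requirement in the Bernardo construction of an increasing sequence of compact subsets of the parameter space on which the conditional priors are proper, followed by a limiting argument whose value can depend on the exhaustion chosen when the conditional priors are improper. The factorisation $h_{j,j}^{1/2}(\boldsymbol\theta)=f_j(\theta_j)g_j(\boldsymbol\theta_{-j})$ is precisely the condition that makes the $g_j$ factor cancel in the ratio defining the limiting expected divergence, so that the limit along any exhausting sequence returns the same formula $\pi(\theta_j\mid\theta_1,\dots,\theta_{j-1})\propto f_j(\theta_j)$. I would make this rigorous by verifying the tightness and uniform-integrability conditions in Bernardo's original construction for each conditional layer, and then exchanging the compact-set limit with the factorisation to pass directly to the unrestricted product.

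Finally, I would note that the independence of the range of $\theta_j$ from $\boldsymbol\theta_{-j}$ is used to ensure that the conditional prior $\pi(\theta_j\mid\theta_1,\dots,\theta_{j-1})$ is genuinely a function of $\theta_j$ on a fixed domain, so that the chain-rule product $\pi(\boldsymbol\theta)=\prod_{j=1}^{m}\pi(\theta_j\mid\theta_1,\dots,\theta_{j-1})$ makes unambiguous sense, completing the argument.
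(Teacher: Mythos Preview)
The paper does not prove this proposition at all: it is quoted verbatim as Theorem~14 from Bernardo~\cite{bernardo1979a} and used as a black box to compute the various ordered reference priors for the Stacy model. There is therefore no ``paper's own proof'' to compare your attempt against.

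As a sketch of Bernardo's original argument, your outline is broadly on track --- the sequential construction, the identification of the conditional asymptotic precision with $h_{j,j}$ via a Schur complement, and the observation that the factorisation $h_{j,j}^{1/2}=f_j(\theta_j)g_j(\boldsymbol\theta_{-j})$ is exactly what makes the compact-exhaustion limit independent of the chosen sequence are the right ingredients. One point to be careful about: at stage $j$ you need the \emph{marginal} asymptotic precision of $\theta_j$ after integrating out $\theta_{j+1},\dots,\theta_m$ with respect to their already-constructed conditional priors, and it is not automatic that this marginal precision is still $h_{j,j}$ unless you invoke the factorisation at each inner step to show the integrated model inherits the same block structure. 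Your phrase ``replace the joint model by the integrated (marginal) model \dots and repeat'' glosses over this, and making it precise is where most of the work in Bernardo's proof actually lies.
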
 

The reference priors obtained from Proposition \ref{propositionop} belong to the class of improper priors given by
\begin{equation}\label{priorjgenref}
\pi\left(\boldsymbol{\theta}\right)\propto \pi(\phi)\alpha^{-1}\mu^{-1},
\end{equation}
therefore, both $\pi(\mu)\propto\mu^{-1}$, $\pi(\alpha)\propto\alpha^{-1}$ follows power-law distributions with exponent $1$. Our focus will be study the asymptotic power-law behavior of $\pi(\phi)$. Let $(\alpha,\phi,\mu)$ be the ordered parameters of interest, then conditional priors of the $(\alpha,\phi,\mu)$-reference prior are given by
\begin{equation*}
\pi(\alpha)\propto \alpha^{-1}, \ \ \ \  \pi(\phi|\alpha)\propto\sqrt{\frac{\phi\psi'(\phi)-1}{\phi}}, \ \ \ \ \pi(\mu|\alpha,\phi)\propto\mu^{-1} .
\end{equation*}

Therefore, $(\alpha,\phi,\mu)$-reference prior is of the form (\ref{priorjgenref}) with
\begin{equation*}
\pi_7(\phi)\propto \sqrt{\frac{\phi\psi'(\phi)-1}{\phi}} \underset{\phi \to 0^+}{\propto} \phi^{-1} \cdot
\end{equation*}
which is also a power-law distribution with exponent $-1$. Therefore, item ii) of Theorem \ref{fundteo2} can be applied with $k= r_0 = q_\infty = 1$ where $q_\infty \geq r_0$ which implies that $\pi_7(\alpha,\phi,\mu)$ leads to an improper posterior for all $n\in \N^+$.

Assuming that $(\alpha,\mu,\phi)$ are the ordered parameters, then the conditional reference priors are
\begin{equation*}
\pi(\alpha)\propto \alpha^{-1}, \ \ \ \  \pi(\mu|\alpha)\propto \mu^{-1}, \ \ \ \ \pi(\phi|\alpha,\mu)\propto\sqrt{\psi'(\phi)},
\end{equation*}
and the $(\alpha,\mu,\phi)$-reference prior is of the form (\ref{priorjgenref}) with
\begin{equation*}
\pi_8(\phi)\propto \sqrt{\psi'(\phi)}.
\end{equation*}

From $\psi'(\phi)\underset{\phi\to 0^+}{\propto} \phi^{-2}$ we have that $\sqrt{\psi'(\phi)} \underset{\phi\to 0^+}{\propto} \phi^{-1}$, i.e., a PL distribution with exponent $-1$. Similar to the case of $\pi_7(\alpha,\phi,\mu)$ we have that $\pi_8(\alpha,\mu,\phi)$ leads to an improper posterior for all $n\in\N^+$.

Consider the case where $\alpha$ is known with $\alpha=1$ reducing to the Gamma distribution. Then $\pi(\phi,\mu)\propto\mu^{-1}\sqrt{\psi'(\phi)}$ is the $(\mu,\phi)$-reference prior and the joint posterior densities when $\alpha=1$  using the $(\mu,\phi)$-reference is proper for $n\geq 2$ as well as its higher moments.

The results above follows from the fact that 
$\psi'(\phi) \underset{\phi\to 0^+}{\propto} \phi^{-2}$ and $\psi'(\phi) \underset{\phi\to \infty^+}{\propto} ^{-1}$ and thus $\pi_8(\phi)$ has asymptotic power-law behavior given by
\begin{equation*}
\pi_8(\phi) \underset{\phi\to 0^+}{\propto} \phi^{-1} \quad \mbox{and} \quad \pi_8(\phi) \underset{\phi\to \infty^+}{\propto} \phi^{-\frac{1}{2}},
\end{equation*}
therefore, from the power-law distributions above as well as the distribution $\pi(\mu)$ that has a PL with exponent 1, we can apply Theorem \ref{maintheorem2a} with $k=-1$, $r_0=-1$ and $r_\infty=-0.5$ and it follows that the posterior as well as all its moments are proper for all $n>-r_0=1$.

Assuming now that $\phi$ is known with $\phi=1$, then the distribution reduces to the Weibull distribution. In this case, $\pi(\mu,\alpha)\propto\alpha^{-1}\mu^{-1}$ is the $(\alpha,\mu)$-reference prior, note that each prior follows a power-law distribution. The joint posterior densities using the $(\alpha,\mu)$-reference is proper for $n\geq 2$ although its higher moments relative to $\mu$ are improper. This result is a direct consequence from Theorem \ref{maintheorem2wei} considering that $k=-1$ and $q_0=q_\infty=-1$ that leads to a proper posterior.

Returning to general model, if $(\mu,\phi,\alpha)$ is the vector of ordered parameters, we have that the conditional priors are
\begin{equation*}
\pi(\mu)\propto \mu^{-1}, \ \  \pi(\phi|\mu)\propto \sqrt{\psi'(\phi)-\frac{\psi(\phi)^2}{2\psi(\phi)+\phi\psi'(\phi)+\phi\psi(\phi^2)+1}}, \  \ \pi(\alpha|\phi,\mu)\propto\alpha^{-1}
\end{equation*}
and the $(\mu,\phi,\alpha)$-reference prior is of the form (\ref{priorjgenref}) with
\begin{equation*}
\pi_9(\phi)\propto \sqrt{\psi'(\phi)-\frac{\psi(\phi)^2}{2\psi(\phi)+\phi\psi'(\phi)+\phi\psi(\phi^2)+1}} \cdot
\end{equation*}

\begin{corollary}\label{postimreference2}  The prior $\pi_9(\phi)$ has the asymptotic power-law behavior given by $\pi_9\left(\phi\right)\underset{\phi\to 0^+}{\propto} \phi^{-1}$ and the obtained posterior is improper for all $n\in\N^+$.
\end{corollary}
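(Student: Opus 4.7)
The plan is to establish the asymptotic power-law exponent of $\pi_9(\phi)$ as $\phi \to 0^+$ and then invoke Theorem \ref{fundteo2}, following the template used in the preceding corollaries. First, I would rewrite the denominator $D(\phi) := 2\psi(\phi) + \phi\psi'(\phi) + \phi\psi(\phi)^2 + 1$ using the recurrence relations (\ref{digammato0}). The same cancellations that produced $1 + \phi\bigl(\psi(\phi+1)^2 + \psi'(\phi+1)\bigr)$ in the derivation of (\ref{complicatedto0}) apply verbatim, so $D(\phi)$ is analytic at $0$ with $D(0) = 1$. In particular, $1/D(\phi)$ can be expanded as a geometric series in $\phi$ near $0$.

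Next, I would expand $f(\phi) := \psi'(\phi) - \psi(\phi)^2/D(\phi)$ at $\phi = 0^+$. Substituting $\psi(\phi) = -1/\phi + \psi(\phi+1)$ and $\psi'(\phi) = 1/\phi^2 + \psi'(\phi+1)$, one sees that both $\psi'(\phi)$ and $\psi(\phi)^2/D(\phi)$ individually carry $\phi^{-2}$ singularities. The heart of the argument is then to track how these singular terms interact: multiply the Laurent expansion of $\psi(\phi)^2$ by the geometric series for $1/D(\phi)$, collect the $\phi^{-2}$ and $\phi^{-1}$ contributions, and subtract from $\psi'(\phi)$. Once the leading order of $f(\phi)$ is identified, taking $\sqrt{f(\phi)}$ will give the exponent $r_0$ describing the asymptotic behavior of $\pi_9(\phi)$ near the origin.

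With the asymptotic $\pi_9(\phi)\underset{\phi\to 0^+}{\propto}\phi^{-1}$ established, the improper-posterior conclusion follows from Theorem \ref{fundteo2}, item ii), first bullet, applied with $k = -1$ (from $\pi(\mu)\propto \mu^{-1}$ in the reference prior (\ref{priorjgenref})), $q_\infty = -1$ (from $\pi(\alpha)\propto\alpha^{-1}$), and $r_0 = -1$. The condition $q_\infty \geq r_0$ is then satisfied as equality, and one concludes that $\pi_9(\mu,\phi,\alpha)$ yields an improper posterior for every $n\in\mathbb{N}^+$, which is exactly the template used in the earlier corollaries for $\pi_3$, $\pi_4$, $\pi_5$, $\pi_6$, $\pi_7$ and $\pi_8$.

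The main obstacle is the middle step: since $\psi'(\phi)$ and $\psi(\phi)^2/D(\phi)$ both diverge at the same rate $\phi^{-2}$, the proof hinges on a delicate cancellation, and one must expand $D(\phi)^{-1}$ and $\psi(\phi)^2$ to a sufficient number of terms to be sure the correct power-law exponent $r_0$ is obtained before applying Theorem \ref{fundteo2}. Everything else is a mechanical application of the general theorems already established.
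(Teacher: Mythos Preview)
Your plan contains a genuine gap at the crucial middle step. If you carry out the expansion you describe, the $\phi^{-2}$ singularities in $\psi'(\phi)$ and $\psi(\phi)^2/D(\phi)$ do cancel, but the \emph{next} term does not vanish: using $\psi(\phi)=-\phi^{-1}-\gamma+(\pi^2/6)\phi+O(\phi^2)$, $\psi'(\phi)=\phi^{-2}+\pi^2/6+O(\phi)$ and $D(\phi)=1+(\gamma^2+\pi^2/6)\phi+O(\phi^2)$, one obtains
\[
\psi'(\phi)-\frac{\psi(\phi)^2}{D(\phi)}=\frac{\gamma^2+\pi^2/6-2\gamma}{\phi}+O(1),
\]
with $\gamma^2+\pi^2/6-2\gamma\approx 0.824>0$. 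Hence $\pi_9(\phi)\underset{\phi\to 0^+}{\propto}\phi^{-1/2}$, so $r_0=-\tfrac12$, not $-1$. With $q_\infty=-1$ this gives $q_\infty<r_0$, and since $n\leq -r_0=\tfrac12$ is impossible for $n\in\N^+$, the first bullet of Theorem~\ref{fundteo2}\,ii) does \emph{not} apply. Your template from the earlier corollaries therefore breaks down here.

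The paper takes a different route: despite the ``$\phi\to 0^+$'' in the statement (evidently a typo), the proof actually computes the asymptotic as $\phi\to\infty$ via the expansions (\ref{digammatoinfty}), obtaining $\pi_9(\phi)\underset{\phi\to\infty}{\propto}\phi^{-1}$, i.e.\ $r_\infty=-1$. It then invokes the \emph{second} bullet of Theorem~\ref{fundteo2}\,ii) with $k=q_0=r_\infty=-1$, where $2r_\infty+1\geq q_0$ holds with equality. So the correct argument hinges on the behaviour at infinity, not at the origin; the delicate cancellation you anticipated is real, but it produces the wrong exponent for your intended application of the theorem.
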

\begin{proof}
From \cite{abramowitz}, we have
\begin{equation}\label{digammatoinfty}
\psi(\phi)=\log(\phi)-\frac{1}{2\phi}-\frac{1}{12\phi^2}+o\left(\frac{1}{\phi^2}\right) \ \ \mbox{ and } \ \
\psi'(\phi)=\frac{1}{\phi}+\frac{1}{2\phi^2}+o\left(\frac{1}{\phi^2}\right),
\end{equation}
where it follows directly that
\begin{equation*}
\psi(\phi)^2=\log(\phi)^2-\frac{\log(\phi)}{\phi} + o\left(\frac{1}{\phi} \right).
\end{equation*}

Therefore $2\psi(\phi)+\phi\psi'(\phi)+\phi\psi(\phi)^2+1=\phi\log(\phi)^2+\log(\phi)+2+o(1)$ and
\begin{equation*}
\begin{aligned}
\pi_9(\phi)&\propto \sqrt{\psi'(\phi)- \frac{\psi(\phi)^2}{2\psi(\phi)+\phi\psi'(\phi)+\phi\psi(\phi)^2+1}} \\
&=\sqrt{\frac{\left(\frac{1}{\phi}+\frac{1}{2\phi^2}+o\left(\frac{1}{\phi^2}\right)\right)\left(\phi\log(\phi)^2+\log(\phi)+2+o(1)\right)-\log(\phi)^2+\frac{\log(\phi)}{\phi}+o\left(\frac{1}{\phi} \right)}{\phi\log(\phi)^2+\log(\phi)+2+o(1)}}\\
& =\sqrt{\frac{\frac{1}{\phi}\left(\log(\phi)^2+o(\log(\phi)^2) \right)}{\phi\left(\log(\phi)^2+o(\log(\phi)^2) \right)}} = \frac{1}{\phi}\sqrt{\frac{1+o(1)}{1+o(1)}}.
\end{aligned}
\end{equation*}

Thus
\begin{equation*}
\begin{aligned}
\pi_9(\phi)\propto    \sqrt{\psi'(\phi)- \frac{\psi(\phi)^2}{2\psi(\phi)+\phi\psi'(\phi)+\phi\psi(\phi)^2+1}} \underset{\phi\to 0^+}{\propto} \phi^{-1},
\end{aligned}
\end{equation*}
and therefore Theorem \ref{fundteo2} ii) can be applied with $k= q_0 = r_\infty = -1$ where $2r_\infty+1 \geq q_0$. Thus, $\pi_9(\boldsymbol{\theta})$ leads to an improper posterior.

\end{proof}

Finally, let $(\phi,\alpha,\mu)$ be the ordered parameters, then the conditional priors are
\begin{equation*}
\pi(\phi)\propto\sqrt{\frac{\phi^2\psi'(\phi)^2-\psi'(\phi)-1}{\phi^2\psi'(\phi)+\phi-1}},  \ \ \ \ \pi(\alpha|\phi)\propto\alpha^{-1}, \ \ \ \  \pi(\mu|\alpha,\phi)\propto\mu^{-1} 
\end{equation*}
and the $(\phi,\alpha,\mu)$-reference prior is of the form (\ref{priorjgenref}) with
\begin{equation}\label{priorphip}
\pi_{10}(\phi)\propto \sqrt{\frac{\phi^2\psi'(\phi)^2-\psi'(\phi)-1}{\phi^2\psi'(\phi)+\phi-1}}.
\end{equation}

It is woth mentioning that $(\phi,\mu,\alpha)$-reference prior is the same as the $(\phi,\alpha,\mu)$-reference prior, while $(\mu,\alpha,\phi)$-reference prior has the same form of $\pi_8(\boldsymbol{\theta})$ which completes all possible reference priors obtained from Proposition \ref{propositionop}.


\begin{corollary}\label{maintheproper}  The prior $\pi_{10}\left(\phi\right)$ has the asymptotic power-law behavior given by 
\begin{equation*}
\pi_{10}\left(\phi\right)\underset{\phi\to 0^{+}}{\propto} \phi^{-\frac{1}{2}} \quad \mbox{ and } \quad \pi_{10}\left(\phi\right)\underset{\phi\to \infty}{\propto} \phi^{-\frac{3}{2}}, 
\end{equation*}
then the obtained posterior distribution is proper for $n\geq 2$ and its higher moments are improper for all $n\in\N^+$.
\end{corollary}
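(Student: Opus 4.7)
The plan is to first derive the two asymptotic equivalences for $\pi_{10}(\phi)$, and then feed the resulting exponents into Theorem \ref{fundteo3} to conclude both the propriety statement and the failure of higher moments. Since $\pi_{10}(\phi)=\sqrt{N(\phi)/D(\phi)}$ with $N(\phi)=\phi^2\psi'(\phi)^2-\psi'(\phi)-1$ and $D(\phi)=\phi^2\psi'(\phi)+\phi-1$, it suffices by Proposition \ref{properties} to obtain the asymptotic power-law behavior of $N$ and $D$ separately at $0^+$ and at $\infty$, and then take the square root of the ratio.

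For the behavior at $0^+$, equation (\ref{equatinpropi}) already gives $N(\phi)\underset{\phi\to 0^+}{\propto}1$, and equation (\ref{eqaux2}) (derived via the recurrence $\psi'(\phi)=\phi^{-2}+\psi'(\phi+1)$) gives $D(\phi)\underset{\phi\to 0^+}{\propto}\phi$. Dividing and taking square roots yields $\pi_{10}(\phi)\underset{\phi\to 0^+}{\propto}\phi^{-1/2}$. For the behavior at $\infty$, equation (\ref{equatinpropi}) gives $N(\phi)\underset{\phi\to\infty}{\propto}\phi^{-2}$; for the denominator I would use the asymptotic expansion $\psi'(\phi)=\phi^{-1}+\tfrac{1}{2}\phi^{-2}+o(\phi^{-2})$ recalled in (\ref{digammatoinfty}), so that $\phi^2\psi'(\phi)=\phi+\tfrac12+o(1)$ and hence $D(\phi)=2\phi+o(\phi)\underset{\phi\to\infty}{\propto}\phi$. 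The ratio is therefore $\propto \phi^{-3}$ at infinity, and the square root gives $\pi_{10}(\phi)\underset{\phi\to\infty}{\propto}\phi^{-3/2}$.

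With the asymptotic power-law exponents established, I apply Theorem \ref{fundteo3} to the full prior $\pi_{10}(\phi,\alpha,\mu)\propto \pi_{10}(\phi)\,\alpha^{-1}\mu^{-1}$, which sits in the class (\ref{priorjgenref}). The relevant parameters are $k=-1$, $q_0=q_\infty=-1$, $r_0=-1/2$, $r_\infty=-3/2$. One checks $q_\infty=-1<-1/2=r_0$ and $2r_\infty+1=-2<-1=q_0$, so the structural conditions of Theorem \ref{fundteo3} hold; the remaining requirements $n>-q_0=1$ and $n>-r_0=1/2$ together give $n\geq 2$. This yields propriety exactly for $n\geq 2$.

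Finally, for the higher moments I use the second half of Theorem \ref{fundteo3}, which states that $\alpha^{q}\phi^{r}\mu^{j}\pi_{10}(\phi,\alpha,\mu)$ yields a proper posterior if and only if $j=0$ and $2(r+r_\infty)+1-q_0<q<r+r_0-q_\infty$. Substituting the exponents, the latter window becomes $2r-1<q<r+\tfrac12$. Since $j=0$ is forced, every positive integer moment of $\mu$ is improper. For moments of $\alpha$ (take $r=0$) we need $-1<q<\tfrac12$, which excludes every $q\geq 1$; for moments of $\phi$ (take $q=0$) we need $2r<1$, which excludes every $r\geq 1$. Hence all higher moments are improper. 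The only step requiring more than routine algebra is the asymptotic analysis of $D(\phi)$ at infinity, where one must track the cancellation between $\phi^2\psi'(\phi)$ and $\phi$ carefully enough to see that the leading order is linear and not sub-linear; this is the place where the second-order term of $\psi'$ is essential.
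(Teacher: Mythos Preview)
Your proposal is correct and follows essentially the same route as the paper: you derive the asymptotics of $N(\phi)$ and $D(\phi)$ from (\ref{equatinpropi}), (\ref{eqaux2}) and (\ref{digammatoinfty}), then plug $k=q_0=q_\infty=-1$, $r_0=-\tfrac12$, $r_\infty=-\tfrac32$ into the general theorem to get propriety for $n\geq 2$ and the window $2r-1<q<r+\tfrac12$ for moments. The only cosmetic differences are that the paper invokes Theorem~\ref{fundteo1} first and then Theorem~\ref{fundteo3} for the moments (you use \ref{fundteo3} throughout, which is equivalent), and the paper finishes the moment claim by a full enumeration over $(q,r)\in\mathbb N^2$ rather than checking the coordinate axes; your closing remark about a ``cancellation'' in $D(\phi)$ at infinity is a slight misnomer since the two terms add to $2\phi$ rather than cancel, but your computation is correct.
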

\begin{proof} From (\ref{equatinpropi}) and by the asymptotic relations (\ref{digammatoinfty}) we have that 
\begin{equation*}
\phi^2\psi^{'}(\phi)+\phi-1 =2\phi-\frac{1}{2}+o\left(1\right) \underset{\phi\to \infty}{\propto} \phi
\end{equation*}
which together with equation (\ref{eqaux2}) implies that
\begin{equation*}
\sqrt{\phi^2\psi^{'}(\phi)+\phi-1}\underset{\phi\to 0^{+}}{\propto}  \sqrt{\phi}\ \mbox{ and }\ \sqrt{\phi^2\psi^{'}(\phi)+\phi-1}\underset{\phi\to \infty}{\propto} \sqrt{\phi}.
\end{equation*}
Hence, from the above proportionalities we have that
\begin{equation*}
\sqrt{\frac{\phi^2\psi'(\phi)^2-\psi'(\phi)-1}{\phi^2\psi'(\phi)+\phi-1}} \underset{\phi\to 0^{+}}{\propto}\phi^{-\frac{1}{2}} \ \ \mbox{ and } \ \ \sqrt{\frac{\phi^2\psi'(\phi)^2-\psi'(\phi)-1}{\phi^2\psi'(\phi)+\phi-1}} \underset{\phi\to \infty}{\propto} \phi^{-\frac{3}{2}}.
\end{equation*}
Therefore, Theorem \ref{fundteo1} can be applied with $k= q_0 = q_\infty = -1$, $r_0 = -\frac{1}{2}$ and $r_\infty= -\frac{3}{2}$ where $k=-1$, $q_\infty < r_0$ and $2r_\infty+1 < q_0$, and therefore $\pi_{10}(\alpha,\mu,\phi)$ leads to a proper posterior for every $n> -q_0 = 1$.

In order to prove that the higher moments are improper suppose  $\alpha^q\phi^r\mu^j\pi(\boldsymbol{\theta})$ leads to a proper posterior for $r\in\N$, $q\in\N$ and $k\in\N$. By Theorem \ref{fundteo3} we have $j= 0$, $q+q_\infty<r+r_0$, $2(r+r_\infty) \leq q+q_0$ and $n\geq -q_0$, i.e., $k=0$ and $2r-1< q< r+\frac{1}{2}$. The inequality $2r-1<r+\frac{1}{2}$ leads to $r<\frac{3}{2}$, i.e., $r=0$ or $r=1$. By the previous inequality, the case where $r=0$ leads to $-1<q<\frac{1}{2}$, that is, $q=0$. Now, for $r=1$ we have the inequality $1<q<\frac{3}{2}$ which do not have integer solution. Therefore, the only possible values for which $\alpha^q\phi^r\mu^j\pi(\boldsymbol{\theta})$ is proper is $q=r=j=0$, that is, the higher moments are improper. 
\end{proof}

\section{Discussion}
 
Objective priors play an important role in Bayesian analysis. For several important distributions, we showed that such objective priors are improper prior and may lead to improper posterior; in these cases, the Bayesian inference cannot be conducted, which is undesirable. An exciting aspect of our findings is that such priors either follows a power-law distribution or present an asymptotic behavior to this distribution. Our mathematical formalism is general and covers important distributions widely used in the literature. The exponent of the obtained power-law distributions is contained between 0.5 and 1. Hence they are improper with infinite mean and variance. 

We provided sufficient and necessary conditions for the posteriors to be proper, depending on the exponent of the power-law model. For instance, if $\phi$ is known the $(\alpha,\mu)$-reference prior for the Weibull and Generalized half-normal distributions, the priors follow power-law distributions with exponent one and returned proper posteriors. By considering $\alpha$ fixed, we showed that both the  Jeffreys' first rule and the Jeffreys' prior returned proper posterior distributions as well as finite higher moments, which are valid for the Gamma, Nakagami-m and Wilson-Hilferty distributions. Moreover, we provided many situations were the obtained posterior are improper and should not be used, opening new opportunities for the analysis of real data.

The observed behavior also occurs in many other classes of distributions, for instance, for the Lomax distribution, which is a modified version of the Pareto model, the reference prior for the two parameters of the model follows power-law distributions with exponent one \cite{ferreira2020objective}. This behavior is also observed in a Gaussian distribution when $\mu$ is a known parameter, in this case, the Jeffreys prior for standard deviation $\sigma$ follows a power-law distribution with exponent one and the obtained posterior is proper. Under the Behrens-Fisher problem, the obtained Jeffreys prior for the parameters have the same behavior with exponents two while the reference prior has exponents three \cite{liseo}. There are a large number of possible extensions of this current work. The power-law distributions may be used as objective prior in the models when there is the presence of censored data or long-term survival; in these cases, it is difficult or impossible to obtain such objective priors.  The study of the behavior for other distributions, such as generalized linear models, should also be further investigated.

\section*{Disclosure statement}

No potential conflict of interest was reported by the author(s)

\section*{Acknowledgements}

Pedro L. Ramos acknowledges support from the S\~ao Paulo State Research Foundation (FAPESP Proc. 2017/25971-0).

\bibliographystyle{tfs}

\bibliography{reference}

\begin{thebibliography}{10}
\providecommand{\MR}{\relax\unskip\space MR }
\providecommand{\url}[1]{\normalfont{#1}}
\providecommand{\urlprefix}{Available at }

\bibitem{abramowitz}
M. Abramowitz and I.A. Stegun, \emph{Handbook of Mathematical Functions}, 10th
  ed., NBS, Washington, D.C., 1972.

\bibitem{barrat2008dynamical}
A. Barrat, M. Barthelemy, and A. Vespignani, \emph{Dynamical processes on
  complex networks}, Cambridge university press, 2008.

\bibitem{berger1989estimating}
J.O. Berger and J.M. Bernardo, \emph{Estimating a product of means: Bayesian
  analysis with reference priors}, Journal of the American Statistical
  Association 84 (1989), pp. 200--207.

\bibitem{berger1992ordered}
J.O. Berger and J.M. Bernardo, \emph{Ordered group reference priors with
  application to the multinomial problem}, Biometrika 79 (1992), pp. 25--37.

\bibitem{berger1992development}
J.O. Berger, J.M. Bernardo, \emph{et~al.}, \emph{On the development of
  reference priors}, Bayesian statistics 4 (1992), pp. 35--60.

\bibitem{berger2015}
J.O. Berger, J.M. Bernardo, D. Sun, \emph{et~al.}, \emph{Overall objective
  priors}, Bayesian Analysis 10 (2015), pp. 189--221.

\bibitem{bernardo1979a}
J.M. Bernardo, \emph{Reference posterior distributions for bayesian inference},
  Journal of the Royal Statistical Society. Series B (Methodological)  (1979),
  pp. 113--147.

\bibitem{bernardo2005}
J.M. Bernardo, \emph{Reference analysis}, Handbook of statistics 25 (2005), pp.
  17--90.

\bibitem{consonni2018prior}
G. Consonni, D. Fouskakis, B. Liseo, I. Ntzoufras, \emph{et~al.}, \emph{Prior
  distributions for objective bayesian analysis}, Bayesian Analysis 13 (2018),
  pp. 627--679.

\bibitem{corral2010scaling}
{\'A}. Corral, A. Oss{\'o}, and J.E. Llebot, \emph{Scaling of tropical-cyclone
  dissipation}, Nature Physics 6 (2010), pp. 693--696.

\bibitem{deluca2013fitting}
A. Deluca and {\'A}. Corral, \emph{Fitting and goodness-of-fit test of
  non-truncated and truncated power-law distributions}, Acta Geophysica 61
  (2013), pp. 1351--1394.

\bibitem{ferreira2020objective}
P.H. Ferreira, E. Ramos, P.L. Ramos, J.F. Gonzales, V.L. Tomazella, R.S.
  Ehlers, E.B. Silva, and F. Louzada, \emph{Objective bayesian analysis for the
  lomax distribution}, Statistics \& Probability Letters 159 (2020), p. 108677.

\bibitem{fonseca2008objective}
T.C. Fonseca, M.A. Ferreira, and H.S. Migon, \emph{Objective bayesian analysis
  for the student-t regression model}, Biometrika 95 (2008), pp. 325--333.

\bibitem{geilhufe2014power}
M. Geilhufe, L. Held, S.O. Skr{\o}vseth, G.S. Simonsen, and F. Godtliebsen,
  \emph{Power law approximations of movement network data for modeling
  infectious disease spread}, Biometrical Journal 56 (2014), pp. 363--382.

\bibitem{goldstein2004problems}
M.L. Goldstein, S.A. Morris, and G.G. Yen, \emph{Problems with fitting to the
  power-law distribution}, The European Physical Journal B-Condensed Matter and
  Complex Systems 41 (2004), pp. 255--258.

\bibitem{hager1970inferential}
H.W. Hager and L.J. Bain, \emph{Inferential procedures for the generalized
  gamma distribution}, Journal of the American Statistical Association 65
  (1970), pp. 1601--1609.

\bibitem{hanel2017fitting}
R. Hanel, B. Corominas-Murtra, B. Liu, and S. Thurner, \emph{Fitting power-laws
  in empirical data with estimators that work for all exponents}, PloS one 12
  (2017).

\bibitem{jeffreys1946invariant}
H. Jeffreys, \emph{An invariant form for the prior probability in estimation
  problems}, in \emph{Proceedings of the Royal Society of London A:
  Mathematical, Physical and Engineering Sciences}, Vol. 186. The Royal
  Society, 1946, pp. 453--461.

\bibitem{kass1996selection}
R.E. Kass and L. Wasserman, \emph{The selection of prior distributions by
  formal rules}, Journal of the American Statistical Association 91 (1996), pp.
  1343--1370.

\bibitem{liseo}
B. Liseo, \emph{Elimination of nuisance parameters with reference priors},
  Biometrika 80 (1993), pp. 295--304.

\bibitem{lloyd2019improved}
L.R. Lloyd-Jones, J. Zeng, J. Sidorenko, L. Yengo, G. Moser, K.E. Kemper, H.
  Wang, Z. Zheng, R. Magi, T. Esko, \emph{et~al.}, \emph{Improved polygenic
  prediction by bayesian multiple regression on summary statistics}, Nature
  communications 10 (2019), pp. 1--11.

\bibitem{louzada2019repairable}
F. Louzada, J.A. Cuminato, O.M. Rodriguez, V.L. Tomazella, P.H. Ferreira, P.L.
  Ramos, S.R. Niaki, O.A. Gonzatto, I.C. Perissini, L.F. Alegr{\'\i}a,
  \emph{et~al.}, \emph{A repairable system subjected to hierarchical competing
  risks: Modeling and applications}, IEEE Access 7 (2019), pp. 171707--171723.

\bibitem{newman2018networks}
M. Newman, \emph{Networks}, Oxford university press, 2018.

\bibitem{newman2005power}
M.E. Newman, \emph{Power laws, pareto distributions and zipf's law},
  Contemporary physics 46 (2005), pp. 323--351.

\bibitem{northrop2016}
P. Northrop and N. Attalides, \emph{Posterior propriety in bayesian extreme
  value analyses using reference priors}, Statistica Sinica 26 (2016).

\bibitem{o2006uncertain}
A. O'Hagan, C.E. Buck, A. Daneshkhah, J.R. Eiser, P.H. Garthwaite, D.J.
  Jenkinson, J.E. Oakley, and T. Rakow, \emph{Uncertain judgements: eliciting
  experts' probabilities}, John Wiley \& Sons, 2006.

\bibitem{prvzulj2007biological}
N. Pr{\v{z}}ulj, \emph{Biological network comparison using graphlet degree
  distribution}, Bioinformatics 23 (2007), pp. e177--e183.

\bibitem{ramos2017bayesian}
P.L. Ramos, J.A. Achcar, F.A. Moala, E. Ramos, and F. Louzada, \emph{Bayesian
  analysis of the generalized gamma distribution using non-informative priors},
  Statistics 51 (2017), pp. 824--843.

\bibitem{stacy1962}
E.W. Stacy, \emph{A generalization of the gamma distribution}, The Annals of
  Mathematical Statistics  (1962), pp. 1187--1192.

\bibitem{van2001bayes}
J.M. Van~Noortwijk, \emph{Bayes estimates of flood quantiles using the
  generalised gamma distribution}, System and Bayesian Reliability  (2001), pp.
  351--374.

\bibitem{wang2019novel}
C. Wang and H.G. Matthies, \emph{Novel model calibration method via
  non-probabilistic interval characterization and bayesian theory}, Reliability
  Engineering \& System Safety 183 (2019), pp. 84--92.

\end{thebibliography}

 \section*{Appendix A:}    
\subsection{Useful Proportionalities}

The following proportionalities are useful to prove results related to the posterior distribution, and its proofs can be seen in \cite{ramos2017bayesian}.


\begin{proposition}\label{lim2b} 
Let $\f{p}(\alpha)=\log\left(\dfrac{\frac{1}{n}\sum_{i=1}^n t_i^\alpha}{{\sqrt[n]{\prod_{i=1}^n t_i^\alpha}}}\right)$, $\f{q}(\alpha)=\f{p}(\alpha)+\log{n}$,  for $t_1,t_2,\ldots,t_n$ positive and not all equal, $h\in\mathbb{R}^+$, $r\in\mathbb{R}^+$ and  $t_m = \max\{t_1,\ldots,t_n\}$, then $\f{p}(\alpha)>0$, $\f{q}(\alpha)>0$ and the following results hold
\begin{equation*}
\f{p}(\alpha)\underset{\alpha\to 0^{+}}{\propto} \alpha^2 \quad \mbox{and} \quad \f{p}(\alpha)\underset{\alpha\to \infty}{\propto} \alpha;
\end{equation*}
\begin{equation*}
\f{q}(\alpha)\underset{\alpha\to 0^{+}}{\propto} 1 \quad \mbox{and} \quad \f{q}(\alpha)\underset{\alpha\to \infty}{\propto} \alpha;
\end{equation*}
\begin{equation*}
 \dfrac{\Gamma(n\phi)}{\Gamma(\phi)^n}\underset{\phi\to 0^+}{\propto} \phi^{n-1} 
\ \ \mbox{ and } \ \ \dfrac{\Gamma(n\phi)}{\Gamma(\phi)^n}\underset{\phi\to \infty}{\propto} \phi^{\frac{n-1}{2}}n^{n\phi};
\end{equation*}
\begin{equation}
\gamma\left(h,r\f{q}(\alpha)\right) \underset{\alpha\to 0^{+}}{\propto} 1 \quad \mbox{and } \quad \gamma\left(h,r\f{q}(\alpha)\right) \underset{\alpha\to \infty}{\propto} 1;
\end{equation}
\begin{equation}
\Gamma\left(h,r\f{p}(\alpha)\right) \underset{\alpha\to 0^{+}}{\propto} 1 \quad \mbox{and } \quad \Gamma\left(h,r\f{p}(\alpha)\right) \underset{\alpha\to \infty}{\propto} \alpha^{k-1}e^{-r k(\boldsymbol{x}) \alpha};
\end{equation}
where $k(\boldsymbol{x})=\log\left(\frac{t_m}{\sqrt[n]{\prod_{i=1}^n t_i}}\right)>0$; $\gamma(y,x)=1-\Gamma(y,x)$ and $\Gamma(y,x)=\int_{x}^{\infty}{w^{y-1}e^{-w}}\, dw$  is the upper incomplete gamma function.
\end{proposition}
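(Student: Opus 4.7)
The plan is to handle each of the five proportionality pairs separately, since they rest on different elementary facts, and the whole proof amounts to careful asymptotic expansions plus one inequality.

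First I would establish positivity. Since $p(\alpha) = \log\bigl(\mathrm{AM}(t_i^\alpha)/\mathrm{GM}(t_i^\alpha)\bigr)$, the AM--GM inequality gives $p(\alpha) \geq 0$, with equality iff all $t_i^\alpha$ coincide; because the $t_i$ are positive and not all equal, this fails for every $\alpha>0$, so $p(\alpha)>0$, and then $q(\alpha)=p(\alpha)+\log n>0$ (note $n\geq 2$ is forced by the ``not all equal'' hypothesis). Next, for the behavior of $p$ near $0$, I would expand $t_i^\alpha=1+\alpha\log t_i+\tfrac{\alpha^2}{2}(\log t_i)^2+O(\alpha^3)$. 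Writing $L=\tfrac{1}{n}\sum\log t_i$, the numerator of the ratio inside $\log$ is $1+\alpha L+\tfrac{\alpha^2}{2}\overline{(\log t_i)^2}+O(\alpha^3)$ and the denominator is $e^{\alpha L}=1+\alpha L+\tfrac{\alpha^2}{2}L^2+O(\alpha^3)$. The leading terms cancel, and $\log(1+x)\sim x$ gives $p(\alpha)=\tfrac{\alpha^2}{2}\mathrm{Var}(\log t_i)+O(\alpha^3)$, with the variance strictly positive; hence $p(\alpha)\underset{\alpha\to 0^+}{\propto}\alpha^2$. For $\alpha\to\infty$, let $k$ be the multiplicity of $t_m=\max t_i$ and factor $t_m^\alpha$ out: $\tfrac{1}{n}\sum t_i^\alpha=\tfrac{t_m^\alpha}{n}\bigl(k+\sum_{t_i<t_m}(t_i/t_m)^\alpha\bigr)$, the sum tending to $0$. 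Taking the log of the ratio gives $p(\alpha)=\alpha\bigl(\log t_m-L\bigr)+\log(k/n)+o(1)=\alpha\,k(\boldsymbol{x})+O(1)$, proving $p(\alpha)\underset{\alpha\to\infty}{\propto}\alpha$. The statements for $q(\alpha)$ follow immediately: at $\infty$ it inherits the linear behavior of $p$, and at $0^+$ we have $q(\alpha)\to\log n\in\mathbb{R}^+$, so $q(\alpha)\propto 1$.

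For the gamma ratio, I would use the Laurent expansion $\Gamma(\phi)=\phi^{-1}+O(1)$ near $0$, giving $\Gamma(\phi)^n\underset{\phi\to 0^+}{\propto}\phi^{-n}$ and $\Gamma(n\phi)\underset{\phi\to 0^+}{\propto}\phi^{-1}$, so their ratio is $\propto \phi^{n-1}$. At infinity I would apply Stirling's formula $\Gamma(z)\sim\sqrt{2\pi/z}\,(z/e)^{z}$. The exponential parts give $(n\phi/e)^{n\phi}/(\phi/e)^{n\phi}=n^{n\phi}$ and the polynomial prefactors combine to $\phi^{(n-1)/2}$ up to a multiplicative constant, producing $\phi^{(n-1)/2}n^{n\phi}$ as claimed.

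Finally, for the incomplete gamma statements I would use only endpoint values and one standard asymptotic. The lower incomplete gamma $\gamma(h,z)$ is continuous on $[0,\infty]$ with $\gamma(h,0)=0$ and $\gamma(h,\infty)=\Gamma(h)$; since $r\,q(\alpha)\to r\log n>0$ as $\alpha\to 0^+$ and $r\,q(\alpha)\to\infty$ as $\alpha\to\infty$, the value $\gamma(h,r\,q(\alpha))$ is bounded away from $0$ and $\infty$ in both limits, so $\gamma(h,r\,q(\alpha))\propto 1$ in both regimes. For the upper incomplete gamma $\Gamma(h,r\,p(\alpha))$ at $\alpha\to 0^+$, continuity together with $p(\alpha)\to 0$ gives $\Gamma(h,r\,p(\alpha))\to\Gamma(h)\in\mathbb{R}^+$, hence $\propto 1$. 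At $\alpha\to\infty$ I would invoke the classical asymptotic $\Gamma(h,z)\sim z^{h-1}e^{-z}$ and substitute $z=r\,p(\alpha)=r\,k(\boldsymbol{x})\alpha+O(1)$; the polynomial factor yields $\alpha^{h-1}$ up to a constant and the exponential factor yields $e^{-rk(\boldsymbol{x})\alpha}$ (again up to a constant from the $O(1)$ correction), delivering the stated proportionality.

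The main obstacle, and the one I would double-check most carefully, is the $\alpha\to\infty$ expansion of $p(\alpha)$: one must control the subdominant terms $\sum_{t_i<t_m}(t_i/t_m)^\alpha$ carefully enough that after taking a logarithm and multiplying by $r$, the exponential in the asymptotic for $\Gamma(h,\cdot)$ separates cleanly into $e^{-rk(\boldsymbol{x})\alpha}$ times a bounded, bounded-below factor. The other steps are routine once the right expansion of $p$ is in hand.
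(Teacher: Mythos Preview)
Your proof is correct and essentially self-contained. The paper, however, does not supply its own argument for this proposition: it simply states that ``its proofs can be seen in \cite{ramos2017bayesian}.'' So there is no in-paper proof to compare against; you have filled in what the paper outsources. Your choices---AM--GM for positivity, a second-order Taylor expansion at $0$ yielding the variance of $\log t_i$, the $t_m^\alpha$ factorization at $\infty$, the Laurent/Stirling pair for the $\Gamma$ ratio, and continuity plus the classical $\Gamma(h,z)\sim z^{h-1}e^{-z}$ asymptotic for the incomplete-gamma statements---are exactly the natural ones and match what one would expect the cited reference to contain.

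One small remark: the statement as printed has $\alpha^{k-1}$ in the last proportionality, which is evidently a typo for $\alpha^{h-1}$ (the parameter $k$ is not otherwise defined there, while $h$ is the shape argument of $\Gamma(h,\cdot)$); your write-up implicitly corrects this. Your closing worry about the $O(1)$ correction in $r\,p(\alpha)=r\,k(\boldsymbol{x})\alpha+O(1)$ is also well placed but harmless: since $e^{O(1)}$ is bounded above and below by positive constants, it is absorbed by the $\underset{\alpha\to\infty}{\propto}$ relation, and $(r k(\boldsymbol{x})\alpha+O(1))^{h-1}\propto \alpha^{h-1}$ for the same reason.
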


\subsection{Proof of Proposition \ref{proportional1}}\label{propsitionaprof}

Suppose that $\f{g}(x)\underset{x\to a}{\lesssim} \f{h}(x)$ and $\f{g}(x)\underset{x\to b}{\lesssim} \f{h}(x)$. Then, by Definition \ref{definition1} we have that $\limsup_{x\to a} \dfrac{\f{g}(x)}{\f{h}(x)} = w$ for some $w\in \R^+$. Therefore, from the  definition of $\limsup$ there exist some $a'\in(a,b)$ such that $\dfrac{\f{g}(x)}{\f{h}(x)} \leq \dfrac{3w}{2}$ for every $x\in (a,a']$.
Proceeding analogously, there must exist some $v
\in \R^+$ and $b'\in (a',b)$ such that $\dfrac{\f{g}(x)}{\f{h}(x)} \leq \dfrac{3v}{2}$ for every $x\in [b',b)$.
On the other hand, since $\dfrac{\f{g}(x)}{\f{h}(x)}$ is continuous in $[a',b']$, the Weierstrass Extreme Value Theorem states that there exist some $x_1 \in[a',b']$ such that $\dfrac{\f{g}(x)}{\f{h}(x)} \leq \dfrac{\f{g}(x_1)}{\f{h}(x_1)}$ for every $x\in [a',b']$. Finally, choosing $M=\max\left(\dfrac{3w}{2},\dfrac{3v}{2},\dfrac{\f{g}(x_1)}{\f{h}(x_1)}\right)<\infty$, it follows that $\dfrac{\f{g}(x)}{\f{h}(x)} \leq M$ for every $x\in (a,b)$, which by Definition \ref{definition0} means that $g(x)\lesssim h(x)$.

Now suppose $g(x)\lesssim h(x)$. By Definition \ref{definition0}, there exist some $M<0$ such that $\dfrac{\f{g}(x)}{\f{h}(x)} \leq M$ for every $x\in(a,b)$. This implies that $\limsup_{x\to a} \frac{\f{g}(x)}{\f{h}(x)} \leq M < \infty$ which by Definition \ref{definition1} means that $\f{g}(x)\underset{x\to a}{\lesssim} \f{h}(x)$. The proof that $\f{g}(x)\underset{x\to b}{\lesssim} \f{h}(x)$ must also be satisfied is analogous to the previous case. Therefore the theorem is proved.

\subsection{Proof of Theorem \ref{fundteo1alpha}}\label{ctheoremaalph1}

Let $\alpha\in \R^+$ be fixed. Since $\frac{\pi(\phi)}{\Gamma(\phi)^n}\left\{\prod_{i=1}^n{x_i^{\alpha\phi}}\right\}\pi(\mu)\mu^{n\alpha\phi-1}\exp{\left\{-\mu^{\alpha}\sum_{i=1}^n x_i^\alpha\right\}}\geq0$ always, by Tonelli's theorem we have:
\begin{equation*}
\begin{aligned}
d(\boldsymbol{x};\alpha) & = \int\limits_{\mathcal{A}}\frac{\pi(\phi)}{\Gamma(\phi)^n}\left\{\prod_{i=1}^n{x_i^{\alpha\phi-1}}\right\}\pi(\mu)\mu^{n\alpha\phi}\exp\left\{-\mu^{\alpha}\sum_{i=1}^n x_i^\alpha\right\}d\boldsymbol{\theta} \\
& =  \int\limits_0^\infty \int\limits_0^\infty \frac{\pi(\phi)}{\Gamma(\phi)^n}\left\{\prod_{i=1}^n{x_i^{\alpha\phi-1}}\right\}\pi(\mu)\mu^{n\alpha\phi}\exp\left\{-\mu^{\alpha}\sum_{i=1}^n x_i^\alpha\right\}\, d\mu\, d\phi.
\end{aligned}
\end{equation*}

Since $\pi(\mu) \lesssim \mu^{k}$ and $k\geq -1$ by hypothesis it follows that
\begin{equation*}
\begin{aligned}
d(\boldsymbol{x};\alpha) &\lesssim    
\int\limits_0^\infty\int\limits_0^\infty\frac{\pi(\phi)}{\Gamma(\phi)^n}\left(\prod_{i=1}^n{x_i^{\alpha}}\right)^{\phi}\mu^{n\alpha\phi+k}\exp\left\{-\mu^{\alpha}\sum_{i=1}^n x_i^\alpha\right\} \, d\mu \, d\phi \\&
=\int\limits_0^\infty\int\limits_0^{\infty}\frac{\pi(\phi)}{\Gamma(\phi)^n}\left(\prod_{i=1}^n{x_i^{\alpha}}\right)^{\phi}\frac{\alpha\Gamma\left(n\phi+\frac{k+1}{\alpha} \right)}{\left(\sum_{i=1}^n x_i^\alpha\right)^{n\phi+\frac{k+1}{\alpha} }}\, d\mu \, d\phi .
\end{aligned}
\end{equation*}

Now suppose that $k>-1$. Then, since $k+1>0$, $\Gamma(n\phi + \frac{k+1}{\alpha})\underset{\phi\to 0^+}{\propto}1$ and $\Gamma(n\phi +\frac{k+1}{\alpha})\underset{\phi\to \infty}{\propto} \Gamma(n\phi)(n\phi)^{\frac{k+1}{\alpha}}$ (see \cite{abramowitz}). Therefore, from the proportionalities in Proposition \ref{lim2b} it follows that
\begin{equation}\label{eqmain23alpha}
\begin{aligned}
d(\boldsymbol{x};\alpha)
& \lesssim \int\limits_0^{1}\pi(\phi)\frac{1}{\Gamma(\phi)^n}e^{-n\f{q}(\alpha)\phi} d\phi\, + \int\limits_1^{\infty}\pi(\phi)\frac{\Gamma(n\phi)}{\Gamma(\phi)^n}\phi^{\frac{k+1}{\alpha}}e^{-n\f{q}(\alpha)\phi}\, d\phi\\
&\propto  \int\limits_0^{1}\pi(\phi)\phi^{n}e^{-n\f{q}(\alpha)\phi} d\phi\, + \int\limits_1^{\infty}\pi(\phi)\phi^{\frac{n-1}{2}+\frac{k+1}{\alpha}}e^{-n\f{p}(\alpha)\phi}\, d\phi =s_1(\boldsymbol{x};\alpha)+s_2(\boldsymbol{x};\alpha)
\end{aligned}
\end{equation}
where $\f{q(\alpha)}$ and $\f{p(\alpha)}$ are given in Proposition \ref{lim2b} and $s_1(\boldsymbol{x};\alpha)$ and $s_2(\boldsymbol{x};\alpha)$ denote the respective two integrals in the sum that precedes it. It follows that $d(\boldsymbol{x};\alpha) < \infty$ if $s_1(\boldsymbol{x};\alpha)<\infty$ and $s_2(\boldsymbol{x};\alpha)<\infty$. Now, using the proportionalities in Proposition \ref{lim2b} it follows that, since $n + r_0 > -1$, $\f{q}(\alpha)>0$ and $\f{p}(\alpha)>0$, then
\begin{equation*}
\begin{aligned}
s_1(\boldsymbol{x};\alpha)
& \lesssim\int\limits_0^{1}\phi^{n+r_0}e^{-n\f{q}(\alpha)\phi} \, d\phi \,  =  \frac{\gamma(n+r_0+1,n\f{q}(\alpha))}{(n\f{q}(\alpha))^{n+r_0}}\,
 < \infty,
\end{aligned}
\end{equation*}
and
\begin{equation*}
\begin{aligned}
s_2(\boldsymbol{x};\alpha)
 &\lesssim \int\limits_1^\infty \phi^{\frac{n+1+2r_\infty}{2}+\frac{k+1}{\alpha}-1}e^{-n\f{p}(\alpha)\phi}\, d\phi\,
 =  \frac{\Gamma(\frac{n+1+2r_\infty}{2}+\frac{k+1}{\alpha},n\f{p}(\alpha))}{(n\f{p}(\alpha))^{\frac{n+1+2r_\infty}{2}+\frac{k+1}{\alpha}}}\, < \infty,
\end{aligned}
\end{equation*}
therefore, we have that $d(\boldsymbol{x};\alpha)<\infty$.

The case where $k=-1$ and $n>-r_0$ is completely analogous to the previous case, with the only difference in the proof being that $\Gamma(n\phi + \frac{k+1}{\alpha})\underset{\phi\to 0^+}{\propto}\phi^{-1}$ in this case, instead of $\Gamma(n\phi + \frac{k+1}{\alpha})\underset{\phi\to 0^+}{\propto}1$.

\subsection{Proof of Theorem \ref{fundteo2alpha}}\label{ctheoremaalph2}

Let $\alpha\in \R^+$ be fixed. Suppose that hypothesis of item $i)$ hold, that is, $\pi(\mu) \gtrsim \mu^k$ with $k < -1$. Notice that, for $0<\phi\leq-\frac{(k+1)}{n\alpha}$ we have that $n\alpha\phi+k\leq -1$. Moreover, for every $\alpha>0$ fixed we have that $\exp\left\{-\mu^{\alpha}\sum_{i=1}^n x_i^\alpha\right\}\underset{\mu\to 0^{+}}{\propto} 1$. Hence, from Proposition \ref{proposition1} we have that
\begin{equation*}
\begin{aligned}
\int\limits_0^\infty \pi(\mu)\mu^{n\alpha\phi}\exp{\left\{-\mu^{\alpha}\sum_{i=1}^n x_i^\alpha\right\}} \, d\mu
\gtrsim \int\limits_0^1 \mu^{n\alpha\phi+k} d\mu = \infty,
\end{aligned}
\end{equation*}
for all $\phi\in (0,-\frac{(k+1)}{n\alpha}]$. Therefore
\begin{equation*}
\begin{aligned}
d(\boldsymbol{x};\alpha) & \gtrsim \int\limits_{0}^{-\frac{(k+1)}{n\alpha}} \frac{\pi(\phi)}{\Gamma(\phi)^n}\left(\prod_{i=1}^n{x_i^{\alpha}}\right)^{\phi}\int\limits_0^{\infty}\mu^{n\alpha\phi+k}\exp{\left\{-\mu^{\alpha}\sum_{i=1}^n x_i^\alpha\right\}} \, d\mu \, d\phi \\
& \gtrsim \int\limits_{0}^{-\frac{(k+1)}{n\alpha}} \infty\; d\phi\;  = \infty,
\end{aligned}
\end{equation*}
that is, $d(\boldsymbol{x};\alpha)=\infty$. 

Now suppose that hypothesis of $ii)$ hold. First suppose that $\pi(\mu) \underset{\mu\to \infty}{\gtrsim} \mu^{k}$ and $\pi(\phi) \underset{\phi\to 0^+}{\gtrsim} \phi^{r_0}$, where $k > -1$ and $n<-r_0-1$. Then, following the same steps that resulted in (\ref{eqmain23alpha})  we have that
\begin{equation*}
\begin{aligned}
d(\boldsymbol{x};\alpha) & \gtrsim \int\limits_0^{1}\phi^{n+r_0}e^{-n\f{q}(\alpha)\phi} \, d\phi \propto \int\limits_0^{1}\phi^{n+r_0} \, d\phi =\infty
\end{aligned}
\end{equation*}
and therefore $d(\boldsymbol{x};\alpha)=\infty$.

The case where $k=-1$, and $n<-r_0$ follows analogously

\subsection{Proof of Theorem \ref{fundteophi1}}\label{ctheoremphi1}

Let $\phi\in \R^+$ be fixed. Since $\pi(\alpha)\alpha^{n}\frac{\pi(\phi)}{\Gamma(\phi)^n}\left\{\prod_{i=1}^n{x_i^{\alpha\phi}}\right\}\pi(\mu)\mu^{n\alpha\phi-1}\exp{\left\{-\mu^{\alpha}\sum_{i=1}^n x_i^\alpha\right\}}\geq0$ always, by Tonelli's theorem we have:
\begin{equation}\label{postdemapb2}
\begin{aligned}
d(\boldsymbol{x};\phi) & = \int\limits_{\mathcal{A}}\pi(\alpha)\alpha^{n}\left\{\prod_{i=1}^n{x_i^{\alpha\phi-1}}\right\}\pi(\mu)\mu^{n\alpha\phi}\exp\left\{-\mu^{\alpha}\sum_{i=1}^n x_i^\alpha\right\}d\boldsymbol{\theta} \\
& = \int\limits_0^\infty  \int\limits_0^\infty \pi(\alpha)\alpha^{n}\left\{\prod_{i=1}^n{x_i^{\alpha\phi-1}}\right\}\pi(\mu)\mu^{n\alpha\phi}\exp\left\{-\mu^{\alpha}\sum_{i=1}^n x_i^\alpha\right\}\, d\mu\, d\alpha.
\end{aligned}
\end{equation}

Now, since $\pi(\mu) \lesssim \mu^{-1}$ by hypothesis it follows that
\begin{equation*}
\begin{aligned}
d(\boldsymbol{x};\phi) & \lesssim  \int\limits_0^\infty\int\limits_0^{\infty}\pi(\alpha)\alpha^{n}\left(\prod_{i=1}^n{x_i^{\alpha}}\right)^{\phi}\mu^{n\alpha\phi-1}\exp{\left\{-\mu^{\alpha}\sum_{i=1}^n x_i^\alpha\right\}} \, d\mu  \, d\alpha \\
&=\int\limits_0^\infty\pi(\alpha)\alpha^{n-1}\dfrac{\left(\prod_{i=1}^n{x_i^{\alpha}}\right)^\phi}{\left(\sum_{i=1}^n x_i^\alpha\right)^{n\phi}}\, d\alpha =\int\limits_0^{\infty}\pi(\alpha)\alpha^{n-1}e^{-n\f{q}(\alpha)\phi}\, d\alpha
\end{aligned}
\end{equation*}
where $\f{q}(\alpha)$ is given in Proposition \ref{lim2b}. Therefore, from the proportionalities in Proposition \ref{lim2b} it follows that
\begin{equation}\label{eqmain23weibull}
\begin{aligned}
d(\boldsymbol{x};\phi)
& \lesssim \int\limits_0^{\infty}\pi(\alpha)\alpha^{n-1}e^{-n\f{q}(\alpha)\phi}\, d\alpha
\\ &\propto\int\limits_0^{1}\alpha^{q_0+n-1}e^{-n\f{q}(\alpha)\phi}\, d\alpha + \int\limits_1^{\infty}\alpha^{q_\infty+n-1}e^{-n\f{q}(\alpha)\phi}\, d\alpha=s_1(\boldsymbol{x};\phi)+s_2(\boldsymbol{x};\phi).
\end{aligned}
\end{equation}
where $s_1(\boldsymbol{x};\phi)$ and $s_2(\boldsymbol{x};\phi)$ denote the respective two real numbers in the sum that precedes it. It follows that $d(\boldsymbol{x};\phi) < \infty$ if $s_1(\boldsymbol{x};\phi)<\infty$ and $s_2(\boldsymbol{x};\phi)<\infty$. 

By Proposition \ref{lim2b}, $\f{q}(\alpha)>0$, which implies that $e^{-n\f{q}(\alpha)\phi}\leq 1$. Moreover, since $q_0+ n > 0$ we have that
\begin{equation*}
\begin{aligned}
s_1(\boldsymbol{x};\phi)
& = \int\limits_0^{1}\alpha^{q_0+n-1}e^{-n\f{q}(\alpha)\phi}\, d\alpha & \leq \int\limits_0^{1}\alpha^{q_0+n-1}\, d\alpha < \infty
\end{aligned}
\end{equation*}

Additionally, by Proposition \ref{lim2b}, $\f{q}(\alpha)\underset{\alpha\to \infty}{\propto} \alpha$ and therefore by Proposition \ref{proportional1} there exists $c>0$ such that $\f{q}(\alpha) \leq c\alpha$ for all $\alpha\in [1,\infty)$. Therefore

\begin{equation*}
\begin{aligned}
s_2(\boldsymbol{x};\phi)
= \int\limits_1^{\infty}\alpha^{q_\infty+n-1}e^{-n\f{q}(\alpha)\phi}\, d\alpha 
\leq \int\limits_1^{\infty} \alpha^{q_\infty+n-1}e^{-n\phi c\alpha}\, d\alpha
= \frac{\Gamma(q_\infty+n,n\phi c)}{(n\phi c)^{q_\infty+n}}< \infty,
\end{aligned}
\end{equation*}
hence, $d(\boldsymbol{x};\phi)<\infty$.

\subsection{Proof of Theorem \ref{fundteophi2}}\label{ctheoremphi2}

Let $\phi\in \R^+$ be fixed. Suppose that $\pi(\mu)  \gtrsim \mu^{k}$ where  $k< -1$. Notice that,  for $0<\alpha\leq\frac{k+1}{n\phi}$ it follows that $n\phi+\frac{k+1}{\alpha}\leq0$  and since $\exp\left\{-\mu^{\alpha}\sum_{i=1}^n x_i^\alpha\right\}\underset{\mu\to 0^{+}}{\propto} 1$, we have that
\begin{equation*}
\begin{aligned}
\int\limits_0^\infty \pi(\mu)\mu^{n\alpha\phi}\exp{\left\{-\mu^{\alpha}\sum_{i=1}^n x_i^\alpha\right\}} \, d\mu
\gtrsim \int\limits_0^1 \mu^{n\alpha\phi+k} d\mu = \infty,
\end{aligned}
\end{equation*}
for all $\alpha\in (0,\frac{k+1}{n\phi}]$. Therefore 
\begin{align*}
d(\boldsymbol{x};\phi) \gtrsim \int\limits_0^{\frac{k+1}{n\phi}}\pi(\alpha)\alpha^{n-1}\left(\prod_{i=1}^n{x_i^{\alpha}}\right)^\phi\int\limits_{0}^1 \pi(\mu)\mu^{n\alpha\phi}\exp{\left\{-\mu^{\alpha}\sum_{i=1}^n x_i^\alpha\right\}}\, d\mu\, d\alpha     = \int_0^{\frac{k+1}{n\phi}} \infty \; d\alpha= \infty
\end{align*}
hence $d(\boldsymbol{x};\phi)=\infty$. 

 Now suppose that
 $\pi(\mu) \gtrsim \mu^{k}$ and $\pi(\alpha) \underset{\alpha\to 0^+}{\gtrsim} \alpha^{q_0}$, where $k > -1$ and $q_0\in\R$. Then
\begin{equation*}
\begin{aligned}
d(\boldsymbol{x};\phi) &  \gtrsim \int\limits_0^1\int\limits_0^{\infty}\alpha^{n+q_0}\left(\prod_{i=1}^n{x_i^{\alpha}}\right)^{\phi}\mu^{n\alpha\phi+k}\exp\left\{-\mu^{\alpha}\sum_{i=1}^n x_i^\alpha\right\} \, d\mu\, d\alpha \\
&= \int\limits_0^1\int\limits_0^\infty\alpha^{n+q_0}\frac{\left(\prod_{i=1}^n{x_i^{\alpha}}\right)^\phi}{\left(\sum_{i=1}^n x_i^\alpha\right)^{n\phi+\frac{k+1}{\alpha}}}u^{n\phi+\frac{k+1}{\alpha}-1}e^{-u}\, du\, d\alpha =\\
&= \int\limits_0^1\int\limits_0^\infty\alpha^{n+q_0}\left(\prod_{i=1}^n{x_i^\alpha}\right)^{-\frac{k+1}{\alpha}}n^{-n\phi -\frac{k+1}{\alpha}}e^{-\f{p}(\alpha)\left(n\phi+\frac{k+1}{\alpha}\right)}u^{n\phi+\frac{k+1}{\alpha}-1}e^{-u}\, du\, d\alpha\\
&= \int\limits_0^{\infty}\left(\prod_{i=1}^n{x_i}\right)^{-(k+1)}n^{-n\phi}u^{n\phi-1}e^{-u}\int\limits_0^1\alpha^{n+q_0}e^{-\f{p}(\alpha)\left(n\phi+\frac{k+1}{\alpha}\right)}e^{(\log{u}-\log{n}) \frac{k+1}{\alpha}}\, d\alpha\, du
\end{aligned}
\end{equation*}
where in the above we used the change of variables $u = \mu^\alpha \sum_{i=1}^n x_i^\alpha$ in the integral and $\f{p}(\alpha)$ is given as in Proposition \ref{lim2b}.

Now, since $\f{p}(\alpha)\underset{\alpha\to 0^{+}}{\propto} \alpha^2$ from Proposition \ref{lim2b} it follows that $\lim_{\alpha\to 0^+} e^{-\f{p}(\alpha)\left(n\phi+\frac{k+1}{\alpha}\right)} = \lim_{\alpha\to 0^+} e^{-\frac{p(\alpha)}{\alpha^2}\left(n\phi \alpha + k + 1\right)\alpha} = e^0 = 1$. These two facts together applied to the above inequality leads to
\begin{equation*}
\begin{aligned}
d(\boldsymbol{x};\phi) 
&\gtrsim \int\limits_0^{\infty}n^{-n\phi}\left(\prod_{i=1}^n{x_i}\right)^{-(k+1)}u^{n\phi-1}e^{-u}\int\limits_0^1\alpha^{n+q_0}e^{(\log{u}-\log{n}) \frac{k+1}{\alpha}}\, d\alpha\, du
\end{aligned}
\end{equation*}

Thus, since $n\geq 1$ and $\log u - \log n > 0$ for $u\geq 3n > e\cdot n$, and since $\int_0^1 \alpha^H e^{\frac{L}{\alpha}} = \infty$ for every $H\in \R$ and $L\in R^+$ (which can be easily checked via the change of variable $\beta = \frac{1}{\alpha}$ in the integral), it follows that
\begin{equation}\label{weibullpos}
\begin{aligned}
d(\boldsymbol{x};\phi) &\gtrsim\int\limits_0^{\infty}n^{-n\phi}\left(\prod_{i=1}^n{x_i}\right)^{-(k+1)}u^{n\phi-1}e^{-u}\cdot \infty\, du\, = \infty,
\end{aligned}
\end{equation}
and therefore $d(\boldsymbol{x};\phi) = \infty$.





Now suppose that
 $\pi(\mu) \underset{\mu\to \infty}{\gtrsim} \mu^{k}$ and $\pi(\alpha) \underset{\alpha\to 0^+}{\gtrsim} \alpha^{q_0}$, where $k \leq -1$ and
 $n\leq -q_0$. Then, following the same steps that resulted in (\ref{eqmain23weibull}) we have that
\begin{equation*}
\begin{aligned}
d(\boldsymbol{x};\phi)
&\gtrsim \int\limits_0^{1}\alpha^{q_0+n-1}e^{-n\f{q}(\alpha)\phi}\, d\alpha.
\end{aligned}
\end{equation*}
but since by Proposition \ref{lim2b} we have that $q(\alpha)\underset{\alpha\to 0^+}{\propto} 0$ it follows that $e^{-nq(\alpha)\phi}\underset{\alpha\to 0^+}{\propto} 1$ and therefore
\begin{equation*}
\begin{aligned}
d(\boldsymbol{x};\phi) & \gtrsim \int\limits_0^{1}\alpha^{q_0+n-1}\, d\alpha = \infty.
\end{aligned}
\end{equation*}

\subsection{Proof of Theorem \ref{fundteo1}}\label{ctheoremab2}

Since $\pi(\alpha)\alpha^{n}\frac{\pi(\phi)}{\Gamma(\phi)^n}\left\{\prod_{i=1}^n{x_i^{\alpha\phi}}\right\}\pi(\mu)\mu^{n\alpha\phi-1}\exp{\left\{-\mu^{\alpha}\sum_{i=1}^n x_i^\alpha\right\}}\geq0$ always, by Tonelli's theorem we have:
\begin{equation*}
\begin{aligned}
d(\boldsymbol{x}) & = \int\limits_{\mathcal{A}}\pi(\alpha)\alpha^{n}\frac{\pi(\phi)}{\Gamma(\phi)^n}\left\{\prod_{i=1}^n{x_i^{\alpha\phi-1}}\right\}\pi(\mu)\mu^{n\alpha\phi}\exp\left\{-\mu^{\alpha}\sum_{i=1}^n x_i^\alpha\right\}d\boldsymbol{\theta} \\
& = \int\limits_0^\infty \int\limits_0^\infty \int\limits_0^\infty \pi(\alpha)\alpha^{n}\frac{\pi(\phi)}{\Gamma(\phi)^n}\left\{\prod_{i=1}^n{x_i^{\alpha\phi-1}}\right\}\pi(\mu)\mu^{n\alpha\phi}\exp\left\{-\mu^{\alpha}\sum_{i=1}^n x_i^\alpha\right\}\, d\mu\, d\phi\, d\alpha.
\end{aligned}
\end{equation*}

Now, since $\pi(\mu) \lesssim \mu^{-1}$ we have that \\
\begin{equation*}
\begin{aligned}
d(\boldsymbol{x}) & \lesssim  \int\limits_0^\infty\int\limits_0^\infty\int\limits_0^{\infty}\pi(\alpha)\alpha^{n}\frac{\pi(\phi)}{\Gamma(\phi)^n}\left(\prod_{i=1}^n{x_i^{\alpha}}\right)^{\phi}\mu^{n\alpha\phi-1}\exp{\left\{-\mu^{\alpha}\sum_{i=1}^n x_i^\alpha\right\}} \, d\mu \, d\phi \, d\alpha \\
&=\int\limits_0^\infty\int\limits_0^\infty\pi(\alpha)\alpha^{n-1}\frac{\pi(\phi)}{\Gamma(\phi)^n}\left(\prod_{i=1}^n{x_i^{\alpha}}\right)^\phi\dfrac{\Gamma\left(n\phi\right)}{\left(\sum_{i=1}^n x_i^\alpha\right)^{n\phi}}\, d\phi\, d\alpha \\
&=\int\limits_0^{\infty}\int\limits_0^{\infty}\pi(\alpha)\alpha^{n-1}\pi(\phi)\frac{\Gamma\left(n\phi\right)}{\Gamma(\phi)^n}e^{-n\f{q}(\alpha)\phi}\, d\phi\, d\alpha
\end{aligned}
\end{equation*}
where $\f{q}(\alpha)$ is given in Proposition \ref{lim2b}. Therefore, from the proportionalities in Proposition \ref{lim2b} it follows that
\begin{equation}\label{eqmain23}
\begin{aligned}
d(\boldsymbol{x})
& \lesssim \int\limits_0^{\infty}\int\limits_0^{\infty}\pi(\alpha)\alpha^{n-1}\pi(\phi)\frac{\Gamma(n\phi)}{\Gamma(\phi)^n}e^{-n\f{q}(\alpha)\phi}\, d\phi\, d\alpha
\\ &\propto \int\limits_0^{1}\int\limits_0^{1}f(\alpha,\phi)\, d\phi\, d\alpha + \int\limits_1^{\infty}\int\limits_0^{1}f(\alpha,\phi)\, d\phi \, d\alpha + \int\limits_0^{1}\int\limits_1^{\infty}g(\alpha,\phi)\, d\phi\, d\alpha + \int\limits_1^{\infty}\int\limits_1^{\infty}g(\alpha,\phi)\, d\phi\, d\alpha \\&=s_1(\boldsymbol{x})+s_2(\boldsymbol{x})+s_3(\boldsymbol{x})+s_4(\boldsymbol{x}),
\end{aligned}
\end{equation}
where $f(\alpha,\phi)=\pi(\alpha)\alpha^{n-1} \pi(\phi)\phi^{n-1}e^{-n\f{q}(\alpha)\phi}$, $g(\alpha,\phi)=\pi(\alpha)\alpha^{n-1} \pi(\phi)\phi^{\frac{n-1}{2}}e^{-n\f{p}(\alpha)\phi}$ and $s_1(\boldsymbol{x})$, $s_2(\boldsymbol{x})$, $s_3(\boldsymbol{x})$ and $s_4(\boldsymbol{x})$ denote the respective four real numbers in the sum that precedes it. It follows that $d(\boldsymbol{x}) < \infty$,  if and only if $s_1(\boldsymbol{x})<\infty$, $s_2(\boldsymbol{x})<\infty$, $s_3(\boldsymbol{x})<\infty$ and $s_4(\boldsymbol{x})<\infty$. Now, using the proportionalities in Proposition \ref{lim2b} it follows that

\begin{equation*}
\begin{aligned}
s_1(\boldsymbol{x})
& \lesssim \int\limits_0^{1} \alpha^{q_0+n-1}\int\limits_0^{1}\phi^{n+r_0-1}e^{-n\f{q}(\alpha)\phi} \, d\phi \, d\alpha \\
&= \int\limits_0^{1} \alpha^{q_0+n-1}\frac{\gamma(n+r_0,n\f{q}(\alpha))}{(n\f{q}(\alpha))^{n+r_0}}\, d\alpha
\propto \int\limits_0^{1} \alpha^{q_0+n-1}\, d\alpha < \infty,
\end{aligned}
\end{equation*}
where in the last inequality the condition $n>-q_0$ was used, and in the equality that precedes it the condition $n>-r_0$ was used to ensure that $\gamma(n+r_0,n\f{q}(\alpha))$ is well defined and that the equality holds,
\begin{equation*}
\begin{aligned}
s_2(\boldsymbol{x})
&\lesssim \int\limits_1^{\infty}\alpha^{q_\infty+n-1}\int\limits_0^{1}\phi^{n+r_0-1}e^{-n\f{q}(\alpha)\phi} \, d\phi \, d\alpha \\
& = \int\limits_1^{\infty} \alpha^{q_\infty+n-1}\frac{\gamma(n+r_0,n\f{q}(\alpha))}{(n\f{q}(\alpha))^{n+r_0}}\, d\alpha
\propto \int\limits_1^{\infty}\alpha^{q_\infty-r_0-1}\, d\alpha < \infty,
\end{aligned}
\end{equation*}
where just as in the $s_1(\boldsymbol{x})$ case, the condition $n>-r_0$ was used in order for the above equality to hold,
\begin{equation*}\label{demanes32}
\begin{aligned}
s_3(\boldsymbol{x})
&\lesssim \int\limits_0^1 \alpha^{q_0+n-1} \int\limits_1^\infty \phi^{\tfrac{n+1+2r_\infty}{2}-1}e^{-n\f{p}(\alpha)\phi}\, d\phi\, d\alpha\\
& = \int\limits_0^1 \alpha^{q_0+n-1}\frac{\Gamma(\frac{n+1+2r_\infty}{2},n\f{p}(\alpha))}{(n\f{p}(\alpha))^{\frac{n+1+2r_\infty}{2}}}\, d\alpha
\propto \int\limits_0^1\alpha^{q_0-2r_\infty-2}\, d\alpha < \infty,
\end{aligned}
\end{equation*}
where in the last inequality the condition $q_0>2r_\infty+1$ was used, and finally
\begin{equation*}\label{demanes34}
\begin{aligned}
s_4(\boldsymbol{x})
&\lesssim \int\limits_1^{\infty} \alpha^{q_\infty+n-1} \int\limits_1^\infty \phi^{\tfrac{n+1+2r_\infty}{2}-1}e^{-n\f{p}(\alpha)\phi}\, d\phi\, d\alpha\\
& = \int\limits_1^{\infty} \alpha^{q_\infty+n-1}\frac{\Gamma(\frac{n+1+2r_\infty}{2},n\f{p}(\alpha))}{(n\f{p}(\alpha))^{\frac{n+1+2r_\infty}{2}}}\, d\alpha
\propto \int\limits_1^\infty \alpha^{q_\infty+n-2}e^{-n k \alpha}\, d\alpha < \infty,
\end{aligned}
\end{equation*}
where in the above $k\in \R^+$ is given in Proposition \ref{lim2b}. Therefore, from $s_i(\boldsymbol{x})<\infty, i=1,\ldots,4$, we have that $d=s_1(\boldsymbol{x})+s_2(\boldsymbol{x})+s_3(\boldsymbol{x})+s_4(\boldsymbol{x})<\infty$.

\subsection{Proof of Theorem \ref{fundteo2}}\label{ctheoremab3}

Suppose that hypothesis of item $i)$ hold.

First suppose that $\pi(\mu)  \gtrsim \mu^{k}$ with $k < -1$. Denoting $h=\sqrt{\frac{-k-1}{2n}}> 0$, it follows that for $0< \alpha\leq h$ and $0< \phi\leq h$ we have that $n\alpha\phi + k\leq nh^2 + k = \frac{(k- 1)}{2}  < -1$. Moreover, for every $\alpha>0$ fixed we have that $\exp\left\{-\mu^{\alpha}\sum_{i=1}^n x_i^\alpha\right\}\underset{\mu\to 0^{+}}{\propto} 1$, hence, from Proposition \ref{proposition1} we have that
\begin{equation*}
\begin{aligned}
\int\limits_0^\infty \pi(\mu)\mu^{n\alpha\phi}\exp{\left\{-\mu^{\alpha}\sum_{i=1}^n x_i^\alpha\right\}} \, d\mu
\gtrsim \int\limits_0^\infty \mu^{n\alpha\phi+k} = \infty,
\end{aligned}
\end{equation*}
for all fixed $\alpha\in (0,h]$ and $\phi\in (0,h]$. Therefore
\begin{equation*}
\begin{aligned}
d(\boldsymbol{x}) & \gtrsim  \int\limits_{h/2}^h \int\limits_{h/2}^h \pi(\alpha)\alpha^{n}\frac{\pi(\phi)}{\Gamma(\phi)^n}\left(\prod_{i=1}^n{x_i^{\alpha}}\right)^{\phi}\int\limits_0^{\infty}\mu^{n\alpha\phi+k}\exp{\left\{-\mu^{\alpha}\sum_{i=1}^n x_i^\alpha\right\}} \, d\mu \, d\phi \, d\alpha \\
& \propto \int\limits_{h/2}^h\int\limits_{h/2}^h \infty\; d\phi\; d\alpha = \infty,
\end{aligned}
\end{equation*}
that is, $d(\boldsymbol{x})=\infty$.

Now suppose that
 $\pi(\mu) \underset{\mu\to \infty}{\gtrsim} \mu^{k}$ and $\pi(\alpha) \underset{\alpha\to 0^+}{\gtrsim} \alpha^{q_0}$, where $k > -1$ and $q_0\in\R$. Under these hypothesis, in equation (\ref{weibullpos}) it was proved that
\begin{equation*}
\begin{aligned}
d(\boldsymbol{x};\phi)\propto\int\limits_0^\infty \int\limits_0^\infty \pi(\alpha)\alpha^{n}\left\{\prod_{i=1}^n{x_i^{\alpha\phi-1}}\right\}\pi(\mu)\mu^{n\alpha\phi}\exp\left\{-\mu^{\alpha}\sum_{i=1}^n x_i^\alpha\right\}\, d\mu\, d\alpha= \infty
\end{aligned}
\end{equation*}
for every $\phi>0$, and therefore
\begin{equation*}
\begin{aligned}
d(\boldsymbol{x})
& \propto \int\limits_0^\infty \frac{\pi(\phi)}{\Gamma(\phi)^n}\int\limits_0^\infty \int\limits_0^\infty \pi(\alpha)\alpha^{n}\left\{\prod_{i=1}^n{x_i^{\alpha\phi-1}}\right\}\pi(\mu)\mu^{n\alpha\phi}\exp\left\{-\mu^{\alpha}\sum_{i=1}^n x_i^\alpha\right\}\, d\mu\, d\alpha\, d\phi\\
&=\int_0^\infty\frac{\pi(\phi)}{\Gamma(\phi)^n}\cdot \infty\, d\phi = \infty
\end{aligned}
\end{equation*}
and thus $d(\boldsymbol{x}) = \infty$.

Suppose on the other hand that the hypotheses of ii) hold. Since $\pi(\mu)  \gtrsim \mu^{-1}$, following the same steps that resulted in (\ref{eqmain23}) and the same expressions for $s_i(\boldsymbol{x})$, where $i=1,\cdots, 4$, we have that $d(\boldsymbol{x}) \gtrsim s_1(\boldsymbol{x})+s_2(\boldsymbol{x})+s_3(\boldsymbol{x})+s_4(\boldsymbol{x})$. We now divide the proof that $d(\boldsymbol{x})=\infty$ in four cases:

\begin{itemize}
\item Suppose that $
\pi(\phi) \underset{\phi\to 0^+}{\gtrsim} \phi^{r_0}$ and $\pi(\alpha)
\underset{\alpha\to \infty}{\gtrsim} \alpha^{q_\infty}$ with $n\leq -r_0$. Then
\begin{equation*}
\begin{aligned}
s_2(\boldsymbol{x})
&\gtrsim \int\limits_1^{\infty}\alpha^{q_\infty+n-1}\int\limits_0^{1}\phi^{n+r_0-1}e^{-n\f{q}(\alpha)\phi} \, d\phi \, d\alpha \\
& = \int\limits_1^{\infty}\alpha^{q_\infty+n-1}\cdot \infty\, d\alpha = \infty
\end{aligned}
\end{equation*}
which implies $d(\boldsymbol{x})=\infty$.

\item Suppose that $
\pi(\phi) \underset{\phi\to 0^+}{\gtrsim} \phi^{r_0}$ and $\pi(\alpha)
\underset{\alpha\to \infty}{\gtrsim} \alpha^{q_\infty}$ with $q_\infty \geq r_0$ and $n> -r_0$. Then
\begin{equation*}
\begin{aligned}
s_2(\boldsymbol{x})
&\gtrsim \int\limits_1^{\infty}\alpha^{q_\infty+n-1}\int\limits_0^{1}\phi^{n+r_0-1}e^{-n\f{q}(\alpha)\phi} \, d\phi \, d\alpha \\
& = \int\limits_1^{\infty} \alpha^{q_\infty+n-1}\frac{\gamma(n+r_0,n\f{q}(\alpha))}{(n\f{q}(\alpha))^{n+r_0}}\, d\alpha
\propto \int\limits_1^{\infty}\alpha^{q_\infty-r_0-1}\, d\alpha = \infty
\end{aligned}
\end{equation*}
which implies $d(\boldsymbol{x})=\infty$.

\item Suppose that  $
\pi(\alpha) \underset{\alpha\to 0^+}{\gtrsim} \alpha^{q_0}$ and $ \pi(\phi) \underset{\phi\to \infty}{\gtrsim} \phi^{r_\infty}$
with $n\leq -q_0$. Then, by Proposition \ref{lim2b} we have that $q(\alpha)\underset{\alpha\to 0^+}{\propto} 0$ from where it follows that $e^{-nq(\alpha)\phi}\underset{\alpha\to 0^+}{\propto} 1$ and therefore
\begin{equation*}
\begin{aligned}
s_1(\boldsymbol{x})
& \gtrsim   \int\limits_0^{1} \pi(\phi)\phi^{n-1}\int\limits_0^{1}\alpha^{q_0+n-1}e^{-n\f{q}(\alpha)\phi}\, d\alpha \, d\phi  \\
& \propto  \int\limits_0^{1} \pi(\phi)\phi^{n-1}\int\limits_0^{1}\alpha^{q_0+n-1}\, d\alpha \, d\phi = \int\limits_0^{1} \pi(\phi)\phi^{n-1}\cdot \infty\, d\phi = \infty,
\end{aligned}
\end{equation*}
which implies $d(\boldsymbol{x})=\infty$.

\item Suppose that  $
\pi(\alpha) \underset{\alpha\to 0^+}{\gtrsim} \alpha^{q_0}$ and $ \pi(\phi) \underset{\phi\to \infty}{\gtrsim} \phi^{r_\infty}$
with $ 2r_\infty+1 \geq q_0$. Then
\begin{equation*}
\begin{aligned}
s_3(\boldsymbol{x})
&\gtrsim \int\limits_0^1 \alpha^{q_0+n-1} \int\limits_1^\infty \phi^{\tfrac{n+1+2r_\infty}{2}-1}e^{-n\f{p}(\alpha)\phi}\, d\phi\, d\alpha\\
& = \int\limits_0^1 \alpha^{q_0+n-1}\frac{\Gamma(\frac{n+1+2r_\infty}{2},n\f{p}(\alpha))}{(n\f{p}(\alpha))^{\frac{n+1+2r_\infty}{2}}}\, d\alpha
\propto \int\limits_0^1\alpha^{q_0-2r_\infty-2}\, d\alpha = \infty
\end{aligned}
\end{equation*}
which implies $d(\boldsymbol{x})=\infty$.
\end{itemize}
Therefore the proof is completed.
\end{document}